
\documentclass[11pt]{amsart}

\usepackage[a4paper,hmargin=3.5cm,vmargin=4cm]{geometry}
\usepackage{amsfonts,amssymb,amscd,amstext}
\usepackage{graphicx}
\usepackage[dvips]{epsfig}

\usepackage{fancyhdr}
\pagestyle{fancy}
\fancyhf{}

\usepackage{times}

\usepackage{enumerate}
\usepackage{titlesec}
\usepackage{mathrsfs}

\pretolerance=2000
\tolerance=3000


\headheight=13.03pt
\headsep 0.5cm
\topmargin 0.5cm
\textheight = 49\baselineskip
\textwidth 14cm
\oddsidemargin 1cm
\evensidemargin 1cm

\setlength{\parskip}{0.5em}

\titleformat{\section}
{\filcenter\bfseries\large} {\thesection{.}}{0.2cm}{}
\titleformat{\subsection}[runin]
{\bfseries} {\thesubsection{.}}{0.15cm}{}[.]
\titleformat{\subsubsection}[runin]
{\em}{\thesubsubsection{.}}{0.15cm}{}[.]

\usepackage[up,bf]{caption}


\newtheorem{theorem}{Theorem}[section]

\newtheorem{lemma}[theorem]{Lemma}
\newtheorem{corollary}[theorem]{Corollary}

\theoremstyle{definition}
\newtheorem{definition}[theorem]{Definition}
\newtheorem{remark}[theorem]{Remark}

\newtheorem{problem}[theorem]{Problem}

\numberwithin{equation}{section}
\numberwithin{figure}{section}


\def\Dcal{\mathcal{D}}

\def\Pcal{\mathcal{P}}
\def\Rcal{\mathcal{R}}

\def\Ncal{\mathcal{N}}

\def\Tcal{\mathcal{T}}

\def\Ascr{\mathscr{A}}
\def\Cscr{\mathscr{C}}

\def\Uscr{\mathscr{U}}
\def\Bscr{\mathscr{B}}

\def\I{\mathtt{I}}

\def\c{\mathbb{C}}
\def\z{\mathbb{Z}}
\def\d{\mathbb{D}}
\def\b{\mathbb{B}}
\def\r{\mathbb{R}}
\def\n{\mathbb{N}}

\def\z{\mathbb{Z}}
\def\h{\mathbb{H}}
\def\jgot{\mathfrak{j}}
\def\igot{\mathfrak{i}}

\def\Agot{\mathfrak{A}}
\def\mgot{\mathfrak{m}}

\def\IA{\mathfrak{I}_{\mathfrak{A}}}

\def\dist{\mathrm{dist}}
\def\span{\mathrm{span}}
\def\length{\mathrm{length}}

\newcommand\wt{\widetilde}
\newcommand\wh{\widehat}

\newcommand\dibar{\overline\partial}


\usepackage{color}

\begin{document}

\fancyhead[LO]{The Calabi-Yau problem, null curves, and Bryant surfaces}
\fancyhead[RE]{A.\ Alarc\'on and F.\ Forstneri\v c}
\fancyhead[RO,LE]{\thepage}

\thispagestyle{empty}

\vspace*{1cm}
\begin{center}
{\bf\LARGE The Calabi-Yau problem, null curves, and \\ Bryant surfaces}

\vspace*{0.5cm}

{\large\bf Antonio Alarc\'on $\;$ and $\;$ Franc Forstneri\v c}
\end{center}


\vspace*{1cm}

\begin{quote}
{\small
\noindent {\bf Abstract}\hspace*{0.1cm} 
In this paper we prove that every bordered Riemann surface $M$ admits a complete proper null holomorphic embedding into a ball of the complex Euclidean 3-space $\c^3$. The real part of such an embedding is a complete conformal minimal immersion $M\to \r^3$ with bounded image. For any such $M$ we also construct proper null holomorphic embeddings $M\hookrightarrow \c^3$ with a bounded coordinate function;  these give rise to properly embedded null curves $M\hookrightarrow SL_2(\c)$ and to properly immersed Bryant surfaces $M\to\h^3$ in the hyperbolic 3-space. In particular, we give the first examples of proper Bryant surfaces with finite topology and of hyperbolic conformal type. The main novelty when compared to the existing results in the literature is that we work with a fixed conformal structure on $M$. This is accomplished by introducing a conceptually new method based on complex analytic techniques. One of our main tools is an approximate solution to Riemann-Hilbert boundary value problem for null curves in $\c^3$, developed in \S  \ref{sec:RH}. 
\vspace*{0.1cm}

\noindent{\bf Keywords}\hspace*{0.1cm} Riemann surfaces, complex curves, null holomorphic curves, minimal surfaces, Bryant surfaces,  complete immersions, proper immersions.

\vspace*{0.1cm}

\noindent{\bf MSC (2010):}\hspace*{0.1cm} 53C42; 32H02, 53A10, 32B15.}
\end{quote}


\section{Introduction}
\label{sec:intro}
The study of locally area minimizing surfaces, or {\em minimal surfaces},
has been a major topic of geometry ever since Euler observed in 1744 
that the only area minimizing surfaces of rotation in $\r^3$ are planes and catenoids.
After the grounbreaking work of Lagrange (1760) and Meusnier (1776) it became clear that 
a smooth immersed surface $M \to \r^3$ is minimal if and only if its mean curvature 
vanishes identically. Assuming that $M$ is orientable and fixing a conformal 
(equivalently, a complex) structure on $M$, it is natural to study immersions 
$M\to \r^3$ that are {\em conformal} (angle preserving); 
it is classical that such an immersion is minimal if and only if it is harmonic.
This observation naturally links the theory of minimal surfaces to complex analysis.
A simple computation (cf.\ Osserman's classical survey \cite{Osserman}) 
shows that a conformal minimal immersion $M\to \r^3$ is 
locally (on any simply connected subset of $M$) the real part 
$\Re F$ of a {\em holomorphic null curve} --- a holomorphic immersion
$F=(F_1,F_2,F_3)\colon M\to\c^3$ into the complex Euclidean 3-space which
is directed by the conical quadric subvariety
\begin{equation}
\label{eq:Agot}
\Agot=\{z=(z_1,z_2,z_3)\in\c^3\colon z_1^2+z_2^2+z_3^2=0\},
\end{equation}
in the sense that the derivative $F'=(F_1',F_2',F_3')$ with respect to any local holomorphic coordinate on $M$ takes values in $\Agot\setminus \{0\}$. Conversely, the real and the imaginary part of a holomorphic null curve $F\colon M\to \c^3$ are conformally immersed minimal surfaces in $\r^3$. Furthermore, the Riemannian metric $F^*ds^2$ on $M$ induced by $F$ is twice the one induced by the real part $\Re F\colon M\to\r^3$ or by the imaginary part $\Im F\colon M\to\r^3$; cf.\ \cite[p.\ 65]{Osserman}. Here $ds^2$ denotes the Euclidean metric on $\r^3$ or $\c^3$. An immersion $F:M\to\r^n$ is said to be {\em complete} if
the induced metric $F^*ds^2$ is a complete metric on $M$.

This connection enables the use of a rich array of complex analytic methods in 
the study of minimal surfaces. For example, applying Runge's approximation theorem on a planar labyrinth 
of compact sets, Nadirashvili \cite{Nad}  showed in 1996 that the disc $\d=\{z\in \c\colon |z|<1\}$ admits 
a {\em complete bounded minimal immersion}  $\d\to \r^3$, thereby disproving a famous conjecture of 
Calabi from 1965 \cite{Calabi}.  

When applying Nadirashvili's technique to more general Riemann surfaces, an issue 
is that Runge's theorem can not be used to control the placement in $\r^3$ of the entire surface, 
and one is forced to cut away small pieces of the original surface 
in order to keep the image suitably bounded. This of course changes the conformal structure on the 
surface, except in the simply connected case when the surface is conformally equivalent to the disc.

In this paper we introduce considerably stronger and more refined complex analytic tools to this subject: 
the {\em Riemann-Hilbert problem for null curves}, the gluing method for holomorphic sprays (which amounts to a nonlinear version of the $\dibar$-problem in complex analysis), and a technique of exposing boundary points of Riemann surfaces. Some of these methods were already employed in \cite{AF1} where we constructed proper complete holomorphic immersions of an arbitrary bordered Riemann surface to the unit ball of $\c^2$ and, more generally, to any Stein manifold of dimension $>1$. Somewhat surprisingly we discovered that the Riemann-Hilbert method can also be adapted to the construction of holomorphic null curves in $\c^3$; cf.\ Theorem \ref{th:RH} below. The proof of this result depends on the technique of gluing holomorphic sprays applied to the derivatives of null curves. This requires on the one hand to control the periods of some maps in the amalgamated spray in order to obtain well defined null curves by integration, and on the other hand rather delicate estimates to verify that the resulting null curves have the desired properties. 
%

We now give a sampling of what can be achieved with these methods.
Our first main result is a considerable extension of Nadirashvili's theorem from \cite{Nad}.

%
%
%
%
\begin{theorem}\label{th:intro}
Every bordered Riemann surface carries a complete proper null holomorphic embedding into 
the unit ball of $\c^3$.
\end{theorem}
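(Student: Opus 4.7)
The plan is to construct $F$ as the limit of a sequence of null holomorphic embeddings $F_n \colon M \hookrightarrow \c^3$ produced by recursion on a fixed conformal structure. Fix a smooth compact exhaustion $K_1 \Subset K_2 \Subset \cdots$ of the interior $\mathring{M}$, a summable sequence $\epsilon_n \searrow 0$, and a sequence of radii $r_n \nearrow 1$. I will arrange inductively that (i) each $F_n$ is a null holomorphic embedding with image in the open ball of radius $r_n$, (ii) $F_{n+1}$ is $\epsilon_n$-close to $F_n$ in the $\Ccal^1$ topology on $K_n$, (iii) the $F_n$-intrinsic distance from $K_n$ to $bM$ exceeds $n$, and (iv) the image $F_n(bM)$ lies in the shell $\{r_n-\epsilon_n < |z| < r_n\}$. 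Property (ii) forces $\Ccal^1$-convergence on compacts of $\mathring M$ to a null holomorphic map $F$; (iii) combined with (ii) gives completeness of the induced metric $F^*ds^2$; (iv) together with $r_n\nearrow 1$ gives properness into the unit ball; and a general-position correction at each step preserves embeddedness in the limit.

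The crux is the single inductive step $F_n \leadsto F_{n+1}$. Cover $bM$ by finitely many short arcs $I_1, \ldots, I_\ell$, and for each $I_j$ choose a small null holomorphic disc $G_j \colon \overline{\d} \to \c^3$ whose image travels from the current position of $F_n(I_j)$ essentially radially outward, almost to the sphere of radius $r_{n+1}$. Apply the approximate null Riemann-Hilbert theorem (Theorem \ref{th:RH}) with these boundary discs: this produces a null holomorphic map $\wt F \colon M \to \c^3$ that agrees with $F_n$ up to a small error on $K_n$, maps a thin collar of each $I_j$ into a neighbourhood of the corresponding $G_j$, and keeps the whole image in a slightly enlarged ball. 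One such application pushes $F_n(bM)$ radially outward by a definite amount and increases the intrinsic distance from $K_n$ to $bM$ by roughly the length of the $G_j$'s. Rotating the target directions of the $G_j$'s and iterating this Riemann-Hilbert step a controlled number of times within a single stage --- the modern substitute for Nadirashvili's Runge-based labyrinth, but now compatible with a fixed conformal structure --- produces a null map satisfying (ii), (iii) and (iv); one then replaces it by a generic small null-holomorphic perturbation to restore the embedding property, using that embeddings are $\Ccal^1$-open on $K_n$ and that a transversality argument applies on a neighbourhood of $M\setminus K_n$ of sufficiently large codimension in $\c^3$.

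The hardest point, and the real content of the paper's method, is that the approximate null Riemann-Hilbert construction amalgamates holomorphic sprays of $\Agot$-valued maps at the level of derivatives. One must simultaneously (a) kill the periods of the resulting spray of $\Agot\setminus\{0\}$-valued $1$-forms so that its leaves integrate to single-valued null curves on $M$, (b) preserve uniform $\Ccal^1$ approximation on $K_n$ under iteration of many Riemann-Hilbert pushes within one stage, (c) verify quantitatively that each pushing step moves $F_n(bM)$ radially outward by a controlled amount and lengthens the intrinsic distance from $K_n$ to $bM$, and (d) arrange the boundary arcs $I_j$ and discs $G_j$ by exposing boundary points of $M$ so that they fit without period obstructions along $bM$. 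These are exactly the ingredients that the spray-gluing machinery and Theorem \ref{th:RH} are designed to deliver; once they are in hand, the recursive scheme described above yields the complete proper null holomorphic embedding $F\colon M\hookrightarrow\b$ asserted by the theorem.
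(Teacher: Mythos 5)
Your overall architecture --- a recursion on a fixed conformal structure, in which each inductive step applies the exposing-boundary-points technique and then the approximate Riemann--Hilbert theorem for null curves (Theorem~\ref{th:RH}), with period-killing carried out at the level of sprays of $\Agot\setminus\{0\}$-valued derivatives and embeddedness restored by general position --- is precisely the paper's route through Lemma~\ref{lem:main} and Theorem~\ref{th:main}. But the geometric heart of the inductive step is mis-stated in a way that would make the recursion escape the unit ball. You describe the Riemann--Hilbert push as sending the boundary ``essentially radially outward'' while increasing the intrinsic distance from $K_n$ to $bM$ by roughly the length of the deforming discs. If the extrinsic radius and the intrinsic boundary distance grow by comparable amounts $\delta_n$ at each stage, then forcing the intrinsic distance to diverge forces $\sum\delta_n=\infty$, and the radii $r_n$ diverge as well; the image leaves $\b$ after finitely many steps.

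What is actually required is the opposite geometry: the Riemann--Hilbert direction vectors $\vartheta_{i,j}$ are chosen in the orthogonal complement $\langle F(p_{i,j})\rangle^\bot$ of the radial direction, so that the push is, to first order, \emph{tangential} to the sphere of the current radius. By the Pythagoras relation a tangential displacement of size $\delta$ increases the intrinsic boundary distance by about $\delta$ while increasing the extrinsic radius only from $s$ to $\hat s=\sqrt{s^2+\delta^2}$, i.e.\ by $O(\delta^2)$. Taking $\delta_n=c/n$ then gives $\sum\delta_n=\infty$ (completeness) while $\sum\delta_n^2<\infty$ (the radii stay below $1$); this is exactly the content of Lemma~\ref{lem:main} and the choice of sequences $\rho_n$ and $r_n$ in the proof of Theorem~\ref{th:main}. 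The exposing-points stage does make small \emph{radial} adjustments, but only at the finitely many points $p_{i,j}$; the lengthening along every escaping path comes from the orthogonal Riemann--Hilbert push, quantified via the Borel-set condition (d9). Relatedly, no labyrinth-like iteration of Riemann--Hilbert pushes within a single stage is needed: one exposing-points deformation followed by one Riemann--Hilbert deformation per inductive step suffices once the orthogonality supplies the favourable intrinsic-to-extrinsic ratio.
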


By considering the real or the imaginary part of a complete bounded null curve $F\colon M\to\c^3$ 
and taking into account that $F^*ds^2=2(\Re F)^*ds^2= 2(\Im F)^*ds^2$ we obtain the following
corollary to Theorem \ref{th:intro}.

%
%
%
%
\begin{corollary}[On the conformal Calabi-Yau problem]
\label{co:r3}
Every bordered Riemann surface carries a conformal complete minimal immersion into 
$\r^3$ with bounded image. 
\end{corollary}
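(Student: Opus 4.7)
The plan is to deduce the corollary directly from Theorem \ref{th:intro} by taking real parts, since all of the necessary ingredients have already been assembled in the introductory discussion. First I would invoke Theorem \ref{th:intro} to obtain a complete proper null holomorphic embedding $F=(F_1,F_2,F_3)\colon M\to\c^3$ whose image lies in the unit ball of $\c^3$. Setting $u=\Re F\colon M\to\r^3$, the classical correspondence recalled just after \eqref{eq:Agot} (real parts of null curves are conformal minimal immersions) immediately yields that $u$ is a conformal minimal immersion of $M$ into $\r^3$.

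Next I would verify the two quantitative properties. Boundedness is immediate: for every $x\in M$ and each $j=1,2,3$ one has $|u_j(x)|\le |F_j(x)|\le \|F(x)\|<1$, so $u(M)$ is contained in the Euclidean ball of radius $1$ in $\r^3$. For completeness, I would appeal to the identity $F^*ds^2=2\,(\Re F)^*ds^2=2\,u^*ds^2$ that was recalled in the introduction (see \cite[p.~65]{Osserman}). Since scaling a Riemannian metric by a positive constant preserves the class of Cauchy sequences, completeness of $F^*ds^2$ on $M$ forces completeness of $u^*ds^2$, so $u$ is a complete conformal minimal immersion.

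There is no substantive obstacle here: the corollary is a mechanical translation of Theorem \ref{th:intro} through the Weierstrass-type dictionary between null holomorphic curves in $\c^3$ and conformal minimal surfaces in $\r^3$. One could equally well carry out the same argument with $\Im F$ in place of $\Re F$, obtaining a second conformal complete minimal immersion of $M$ into $\r^3$ with bounded image (the conjugate minimal surface of $u$).
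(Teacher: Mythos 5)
Your argument is correct and is exactly the paper's own deduction: the authors state immediately before the corollary that one obtains it from Theorem \ref{th:intro} by taking $\Re F$ (or $\Im F$) and using $F^*ds^2 = 2(\Re F)^*ds^2$. Nothing further to add.
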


Theorem \ref{th:intro} is proved in \S \ref{sec:th}. In view of \cite[Theorem 2.4]{AF2} we can ensure that the resulting null curve is embedded. The main novelty in comparison to all existing results in the literature is that 

{\em we do not change the complex ($=$ conformal) structure on the source Riemann surface}.

We wish to emphasize that results concerning Riemann surfaces in which one controls the conformal structure are typically much more subtle, but also more interesting, than those without keeping track of the structure. For example, the problem of embedding open Riemann surfaces as closed complex curves in $\c^2$ is tractable if one only controls the topology of the surface \cite{AL3}, but is notoriously difficult and far from settled when trying to control the complex structure (see e.g.\ \cite{FW0,FW1} and the references therein.)

\begin{remark} 
Our proof of Theorem \ref{th:intro} does not provide any information on the asymptotic 
behavior of the real or the imaginary part of null curves. However, it is possible to adapt our methods,
in particular,  the Riemann-Hilbert problem (cf.\ Theorem \ref{th:RH} below), to the construction 
of conformal minimal immersions in $\r^3$. 
This allows us to prove that {\em every bordered Riemann surface admits a conformal complete proper 
minimal immersion into a ball of $\r^3$}, thereby improving Corollary \ref{co:r3}. 
We postpone the details to a future publication.
\end{remark}

Before proceeding to other topics and results, we pause for a moment to briefly recall the fascinating history of the Calabi-Yau problem for surfaces. 

In 1965, Calabi \cite{Calabi} conjectured the nonexistence of bounded complete minimal surfaces in $\r^3$.
Even more, he conjectured that there is no complete minimal surface in $\r^3$ with a bounded projection into a straight line. After groundbreaking counterexamples by Jorge and Xavier \cite{JX} (for the latter) and Nadirashvili \cite{Nad} (for the former), these conjectures were revisited by S.-T.\ Yau in his 2000 millenium lecture \cite{Yau2} where he posed several questions on the geometry of complete bounded minimal surfaces in $\r^3$. (See also Problem 91 in Yau's 1982 problem list \cite{Yau1}.) This is why the problem of constructing complete bounded minimal surfaces is known as {\em the Calabi-Yau problem for surfaces}.

Nadirashvili's technique from \cite{Nad} was the seed of several construction methods, providing a plethora of examples. In this line, Ferrer, Mart\'in, and Meeks \cite{FMM} found complete properly immersed minimal surfaces with arbitrary topology in any given either convex or smooth and bounded domain in $\r^3$. The case of finite topology had been settled earlier in \cite{AFM}. 
On the other hand, {\em embedded} complete minimal surfaces with finite genus and at most 
countably many ends are always proper in $\r^3$, hence unbounded 
(see Colding and Minicozzi \cite{CM} and Meeks, P\'erez, and Ros \cite{MPR}). 
The general problem for embedded surfaces remains open.

With such a rich collection of examples at hand, a natural question was whether there exists a complete bounded minimal surface in $\r^3$ whose conjugate surface is well defined and also bounded; equivalently, whether there exists a complete bounded null curve in $\c^3$. More precisely, Mart\'in, Umehara, and Yamada asked whether there exist complete properly immersed null curves in a ball and complete bounded embedded null curves in $\c^3$ \cite[Problems 1 and 2]{MUY1}.
%
%
This question was answered affirmatively by Alarc\'on and L\'opez \cite{AL2} who constructed complete properly immersed null curves with arbitrary topology in any given convex domain of $\c^3$. Their method relies on Runge's and Mergelyan's approximation theorems and is different from Nadirashvili's one. Very recently, Alarc\'on and Forstneri\v c \cite{AF2} proved that the general position of null curves in $\c^3$ is embedded; combining this with \cite{AL2} one obtains complete properly embedded null curves with the given topology (but without control of the conformal type) in any convex domain of $\c^3$ \cite[Corollary 6.2]{AF2}. 

Since complex submanifolds of a complex Euclidean space are minimal, the Calabi-Yau problem is closely related to an old question of Yang \cite{Yang1,Yang2} whether there exist complete bounded complex submanifolds of $\c^n$ for $n>1$. In the particular case of complex curves, Jones \cite{Jones} gave a simply connected immersed example in $\c^2$,  Mart\'in, Umehara, and Yamada \cite{MUY2} gave ones with arbitrary finite genus, and Alarc\'on and L\'opez \cite{AL2} provided examples with any given topological type. Only recently, a complete bounded {\em embedded} complex curve in $\c^2$ has been discovered \cite{AL-embedded}. As for higher dimensional examples, it was observed in \cite[Corollary 1]{AF1} that any bounded strongly pseudoconvex Stein domain admits a complete bounded holomorphic embedding into a high dimensional Euclidean space $\c^n$. Very recently, Globevnik \cite{Globevnik} settled Yang's question in an optimal way by proving the existence of complete bounded complex hypersurfaces in $\c^N$ for all $N>1$.

Concerning the second and more ambitious conjecture by Calabi, it was proved by Alarc\'on, Fern\'andez, and L\'opez \cite{Afer,AFL1,AFL2} that every nonconstant harmonic (resp., holomorphic) function $M\to \r$ (resp., $M\to\c$) on an open Riemann surface can be realized as a coordinate function of a conformal complete minimal immersion $M\to\r^3$ (resp., of a complete null holomorphic immersion $M\to\c^3$). This result can be viewed as an extension of the one by Jorge and Xavier \cite{JX}. It implies in particular 
that $M$ is the underlying Riemann surface of a complete minimal surface in $\r^3$ with a bounded projection into a straight line if and only if $M$ carries a nonconstant bounded harmonic function. 

The preceding discussion brings us to the  next main result of the paper.

%
%
%
%

\begin{theorem}
\label{th:boundedcoordinate}
Every bordered Riemann surface $M$ carries a proper holomorphic null embedding 
$F=(F_1,F_2,F_3)\colon M\to\c^3$ such that the function $F_3$ is bounded on $M$.
\end{theorem}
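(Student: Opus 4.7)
The plan is to run an iterative construction modeled on the proof of Theorem \ref{th:intro}, but with the target changed from \emph{metric completeness with image in a ball} to \emph{properness in the first two coordinates with $F_3$ bounded}. The key observation enabling this is that the null quadric $\Agot\setminus\{0\}$ contains the two complex lines $\c\cdot(1,\pm i,0)$ lying in the horizontal plane $\{z_3=0\}$; null holomorphic discs with tangent direction close to these lines move the pair $(z_1,z_2)$ over essentially arbitrary amounts while leaving $z_3$ almost fixed. This is precisely the flexibility needed to push the first two coordinates to infinity at the boundary of $M$ while retaining control on the third.

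I would fix a normal exhaustion $M_0\Subset M_1\Subset\cdots\Subset M$ of $M$ by smoothly bounded $\Ccal^2$-Runge subdomains, choose any null holomorphic embedding $F_0\colon\bar M_0\to\c^3$, and build inductively null holomorphic embeddings $F_n\colon\bar M_n\to\c^3$ together with positive numbers $\epsilon_n$ with $\sum_n\epsilon_n<\infty$ such that
\begin{enumerate}
\item[(i)] $\|F_n-F_{n-1}\|_{\bar M_{n-1}}<\epsilon_n$,
\item[(ii)] $|F_{n,3}|\leq C$ on $\bar M_n$, for a fixed constant $C$ depending only on $F_0$,
\item[(iii)] $|(F_{n,1}(p),F_{n,2}(p))|>n$ for every $p\in\bar M_n\setminus M_{n-1}$.
\end{enumerate}
Then $F=\lim_n F_n$ is a well-defined holomorphic null map on $M=\bigcup_n M_n$ with $|F_3|\leq C+\sum_n\epsilon_n$, and properness follows from (iii): any $p\in M$ with $|F(p)|\leq R$ must belong to $M_{n-1}$ as soon as $n>R+2$. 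Embeddedness is secured by the general-position argument of \cite[Theorem 2.4]{AF2}.

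For the inductive step, a Mergelyan-type approximation theorem for null curves (built up earlier in the paper via the spray-gluing techniques) first extends $F_{n-1}$ to a null embedding $G$ on a neighborhood of $\bar M_n$ with $\|G-F_{n-1}\|_{\bar M_{n-1}}<\epsilon_n/2$ and $|G_3|\leq C$ on $\bar M_n$; the $F_3$-bound is enforced by choosing Weierstrass data $(g,\eta)$ on the annulus $\bar M_n\setminus M_{n-1}$ with $g\eta$ of small $L^1$-norm. Then, following the strategy of \cite{AF1}, I would insert thin intermediate layers $\bar M_{n-1}=N_0\Subset N_1\Subset\cdots\Subset N_K=\bar M_n$ and apply Theorem \ref{th:RH} successively on each layer, attaching to every boundary point $p\in\partial N_j$ a nearly-horizontal null disc $D_p$ centered at the current value of the curve: the boundary $D_p(\partial\d)$ is placed slightly further from the origin in the $(z_1,z_2)$-projection, while $|D_{p,3}(\lambda)-D_{p,3}(0)|<\delta_j$ with $\sum_j\delta_j<\epsilon_n$. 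Iterating across all layers pushes $|(z_1,z_2)|$ monotonically upward, eventually exceeding $n$ on $\partial M_n$; since each layer is thin, the max principle applied to successive differences ensures that no significant dip develops in the interior of the annulus, so (iii) holds throughout $\bar M_n\setminus M_{n-1}$. The bound (ii) then follows from the maximum principle applied to the holomorphic function $F_{n,3}$ on the annulus, whose boundary values are $C$-bounded on both $\partial M_{n-1}$ (by (i)) and $\partial M_n$ (by the horizontal-disc construction).

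The main technical hurdle is reconciling, inside Theorem \ref{th:RH}, the period-closing constraint with the horizontal-disc restriction: horizontality forces the disc data into a lower-dimensional real submanifold of the full disc space, and one must check that the implicit-function step inside Theorem \ref{th:RH} still produces a period-killing correction within this restricted class. The saving grace is that the Gauss map $g$, confined to small neighborhoods of $0$ or $\infty$, can still vary continuously along $\partial M_n$ through a path in $\c\cup\{\infty\}$, providing enough real parameters for the transversality needed. A secondary technical point is making the Mergelyan extension with $F_3$-control precise, which uses the spray machinery from \cite{AF1} and from earlier sections of the present paper.
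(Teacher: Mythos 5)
Your overall strategy --- push $(F_1,F_2)$ to infinity using deformations in the horizontal null directions $V_k=\tfrac{1}{\sqrt{2}}(1,\pm\imath,0)$, which are orthogonal to $(0,0,1)$ and hence leave $F_3$ almost untouched, and organize this as an inductive scheme driven by Theorem~\ref{th:RH} --- is precisely what the paper does (Theorem~\ref{th:main2}, Lemma~\ref{lem:main2}). The inductive conditions you write down are essentially the paper's (a$_n$)--(f$_n$), with your $|(F_1,F_2)|$ playing the role of $\mgot(F)=\max\{|\langle F,V_1\rangle|,|\langle F,V_2\rangle|\}$; these quantities are comparable, so no difference there.

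There is, however, one genuine gap and one misplaced worry. The gap: Theorem~\ref{th:RH} only permits the size function $\mu$ to be supported on a union of proper compact \emph{arcs} of $bM$, never on an entire boundary component (the paper says explicitly that it cannot yet prove the full-boundary version). Your layered scheme --- ``attaching to every boundary point $p\in\partial N_j$ a nearly-horizontal null disc'' --- implicitly requires a deformation supported along all of $\partial N_j$, which the theorem does not provide. The paper resolves this by splitting each boundary curve into arcs $C_{i,j}$ (chosen so that on each arc one of $|\langle F,V_k\rangle|$ is already $>s$), applying Theorem~\ref{th:RH} only on the central subarcs $\beta_{i,j}$, and treating the junction points $p_{i,j}$ separately with the technique of \emph{exposing boundary points} from \cite{FW0}, \cite[Theorem~8.8.1]{F2011}, which lets one attach null arcs $\lambda_{i,j}$ at the endpoints $F(p_{i,j})$ and reparametrize so that $\mgot$ becomes large near those points before the Riemann-Hilbert step is ever invoked. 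Without this intermediate ``stretching from the points $p_{i,j}$'' step (Step~2 of the paper's Lemma~\ref{lem:main2}), the gaps between your deformation arcs remain uncontrolled and properness would fail. This is the same two-step mechanism already used in \S\ref{sec:th}, so it is available to you; it is simply not in your outline.

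The misplaced worry: you flag a tension between period-closing and a ``horizontal-disc restriction'' inside Theorem~\ref{th:RH}. There is none. The direction vector $\vartheta$ of the Riemann-Hilbert boundary datum is chosen horizontal ($\vartheta=V_1$ or $V_2$), but the dominating spray $\{f_t\}_{t\in B}$ used internally to kill periods is a spray of maps into all of $\Agot\setminus\{0\}$; it is not required to preserve any horizontality. The perturbation $t\mapsto t_0(n)$ that restores vanishing periods is small, so it only slightly modifies the resulting curve (in all three coordinates), and this small perturbation of $F_3$ is absorbed into the $\epsilon$ in property (L4) of Lemma~\ref{lem:main2}. Your proposed workaround via varying the Gauss map through small neighborhoods of $0$ or $\infty$ is therefore unnecessary; the machinery of Theorem~\ref{th:RH} already supplies enough parameters without restricting the spray at all.
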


This means that $(F_1,F_2)\colon M\to \c^2$ is a proper immersion. Our construction ensures that $|F_3|$ can be chosen arbitrarily small on $M$, a fact which is also seen by rescaling. Since a translation maps null curves to null curves, this also gives examples with $c_1<|F_3|<c_2$ for any given pair of constants $0<c_1<c_2$. 

Comparing Theorem \ref{th:boundedcoordinate} to the existing results in the literature, we should recall that every open Riemann surface carries a conformal minimal immersion in $\r^3$ properly projecting into $\r^2\times\{0\}\subset\r^3$ \cite{AL1}, and a holomorphic null embedding in $\c^3$ properly projecting into $\c^2\times\{0\}\subset\c^3$ \cite{AF2}. The constructions in \cite{AL1} and \cite{AF2} use particular versions of Runge's theorem for minimal surfaces and null curves that do not enable one to have any control on the third coordinate function. The solutions to the Riemann-Hilbert problem, provided by Theorem \ref{th:RH} in this paper, allow us to overcome this problem in the case of null curves. To the best of the authors' knowledge, these are the first examples of proper null curves in $\c^3$ with a bounded coordinate function.

Theorem \ref{th:boundedcoordinate} is in contrast to the theorem of Hoffman and Meeks \cite{HM} asserting that the only properly immersed minimal surfaces in $\r^3$ contained in a halfspace are planes. (Similar phenomenona are known for proper holomorphic discs $\d\to\c^2$ \cite{FG2001}.) In particular, if $F\colon M\to\c^3$ is a null curve as those in Theorem \ref{th:boundedcoordinate}, then $\Re (e^{\imath t} F)\colon M\to\r^3$ is a complete {\em non-proper} conformal minimal immersion for all $t\in \r$. (Here $\imath=\sqrt{-1}$.)  On the other hand, null curves $(F_1,F_2,F_3)\colon M\to\c^3$ with the property that both $\Re(F_1,F_2)\colon M\to\r^2$ and $\Im(F_1,F_2)\colon M\to\r^2$ are proper maps were provided in \cite{AL-conjugate}.

Theorem \ref{th:boundedcoordinate} is proved in \S \ref{sec:boundedNC}; here is the main idea in the simplest case when $M$ is the disc $\d$. Choose a couple of orthogonal null vectors in $\c^2\times\{0\}\subset\c^3$, for example, $V_1=(1,\imath,0)$ and $V_2=(1,-\imath,0)$. We begin with the linear null embedding $\d\ni z\mapsto zV_1$. In the next step we use Theorem \ref{th:RH} (the Riemann-Hilbert problem for null holomorphic curves) to deform this embedding near the boundary into the direction $V_2$. Dropping the nullity condition, one could simply take $\d\ni z\mapsto zV_1+z^N V_2$ for a big integer $N>>0$. However, to get a null curve, the third component must be involved, but it can be chosen arbitrarily $\Cscr^0$ small if $N$ is big enough. Next we deform the map from the previous step again in the direction $V_1$, changing  the third component only slighly in the $\Cscr^0(\d)$ norm. Repeating this zig-zag procedure, the resulting sequence of holomorphic null maps converges to a desired proper null curve with a bounded third component. 

For a general bordered Riemann surface $M$ this construction is performed locally on small discs abutting the boundary $bM$, and the local modifications are assembled together by the technique of gluing holomorphic sprays of maps with control up to the boundary. This technique, which was first developed in \cite{DF2007,F2007}, uses bounded linear solution operators for the $\dibar$-equation on strongly pseudoconvex domains and the implicit function theorem on Banach spaces. In the case at hand we actually glue the derivative maps with values in the null quadric $\Agot\setminus\{0\}$ (\ref{eq:Agot}), ensuring that the periods over the loops in $M$ generating the homology group $H_1(M;\z)$ remain zero, so the new map integrates to a null curve. This gluing is enclosed in the proof of Theorem \ref{th:RH} in \S \ref{sec:RH}. Another ingredient used in the proof is a method of exposing boundary points of Riemann surface, first developed in \cite{FW0}, which we adapt to null curves.

%
%
%
%
Theorems \ref{th:intro} and \ref{th:boundedcoordinate} yield a nontrivial line of corollaries.
Recall that a null curve in the special linear group 
\[
SL_2(\c)= \left \{ z= \left(\begin{matrix} z_{11} & z_{12} 
	\cr z_{21} & z_{22} \end{matrix} \right) \in\c^4 
	\colon \det z= z_{11}z_{22}-z_{12}z_{21}=1\right \}
\] 
is a holomorphic immersion 
$F\colon M\to SL_2(\c)$ of an open Riemann surface $M$ into $SL_2(\c)$ which is directed by the quadric variety
\[
\mathfrak{E}=	\left \{ z= \left(\begin{matrix} z_{11} & z_{12} 
	\cr z_{21} & z_{22} \end{matrix} \right) \colon \det z= z_{11}z_{22}-z_{12}z_{21}=0\right \} 
	\subset \c^4.
\]
As before, to be directed by $\mathfrak{E}$ means that the derivative $F'\colon M\to\c^4$ with respect to any local holomorphic coordinate on $M$ belongs to $\mathfrak{E}\setminus \{0\}$.
The biholomorphic map $\Tcal \colon \c^3\setminus\{z_3=0\} \to SL_2(\c)\setminus\{z_{11}=0\}$, given by
\begin{equation}\label{eq:T}
	\Tcal(z_1,z_2,z_3) = \frac{1}{z_3}\left(\begin{matrix} 1 & z_1+\imath z_2 \cr 
	z_1-\imath z_2 & 	z_1^2+z_2^2+z_3^2
\end{matrix} \right),
\end{equation}
carries null curves into null curves (see Mart\'in, Umehara, and Yamada \cite{MUY1}). Furthermore, if $F=(F_1,F_2,F_3) \colon M\to \c^3$
is a proper null curve such that $c_1 <|F_3|<c_2$ on $M$ for a pair of constants $0<c_1<c_2$ (such null curves exist in view of Theorem \ref{th:boundedcoordinate}), then $G=\Tcal\circ F\colon M\to SL_2(\c)$ is
a proper null curve in $SL_2(\c)$. This proves the following.

\begin{corollary}
\label{cor:nullSL}
Every bordered Riemann surface admits a proper holomorphic null embedding into $SL_2(\c)$. Furthermore, there exist such embeddings with a bounded coordinate function.
\end{corollary}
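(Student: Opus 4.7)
The plan is to reduce the corollary to Theorem \ref{th:boundedcoordinate} via the biholomorphism $\Tcal$ of (\ref{eq:T}). Given a bordered Riemann surface $M$, I would first apply Theorem \ref{th:boundedcoordinate} to obtain a proper holomorphic null embedding $\wt F=(\wt F_1,\wt F_2,\wt F_3)\colon M\to\c^3$ with $|\wt F_3|$ bounded. Invoking the scaling remark immediately following that theorem, I can arrange $|\wt F_3|<\epsilon$ for any preassigned $\epsilon>0$; then $F:=\wt F+(0,0,c)$ with $c>\epsilon>0$ is again a proper null embedding, since nullity depends only on the derivative and translations of the target fix both properness and embeddedness. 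This new map satisfies $c_1:=c-\epsilon<|F_3|<c+\epsilon=:c_2$ on all of $M$.

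Next I would set $G:=\Tcal\circ F\colon M\to SL_2(\c)$. Because $F(M)$ avoids the hyperplane $\{z_3=0\}$ and $\Tcal$ is a biholomorphism from $\c^3\setminus\{z_3=0\}$ onto $SL_2(\c)\setminus\{z_{11}=0\}$ that carries null curves to null curves (Mart\'in--Umehara--Yamada \cite{MUY1}), $G$ is automatically a holomorphic null embedding. To verify properness, observe that $F$ being proper together with the upper bound $|F_3|<c_2$ forces $|F_1|^2+|F_2|^2\to\infty$ on the ends of $M$. Reading off (\ref{eq:T}), the off-diagonal entries of $G$ are $(F_1\pm\imath F_2)/F_3$, and since $|F_3|>c_1$ at least one of these tends to infinity on the ends; hence $|G|\to\infty$ in $\c^4\supset SL_2(\c)$, so $G\colon M\to SL_2(\c)$ is proper. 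This proves the first assertion. For the second, the $(1,1)$-entry of $G$ equals $1/F_3$, which is bounded by $1/c_1$ in modulus, supplying a bounded coordinate function.

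The proof is essentially a verification once Theorem \ref{th:boundedcoordinate} is available; the only mildly delicate point is securing the two-sided bound $c_1<|F_3|<c_2$, without which the postcomposition by $\Tcal$ could destroy properness along the zero locus of $F_3$. This is handled by the scale-and-translate step described in the first paragraph, which exploits the invariance of the null condition under affine automorphisms of $\c^3$.
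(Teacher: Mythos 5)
Your proposal is correct and follows the same route as the paper: invoke Theorem \ref{th:boundedcoordinate}, use the scale-and-translate trick (already noted in the paragraph after the theorem) to secure a two-sided bound $c_1<|F_3|<c_2$, and then postcompose with the Mart\'in--Umehara--Yamada biholomorphism $\Tcal$. The only difference is that you spell out the properness of $\Tcal\circ F$ and identify the bounded coordinate explicitly, whereas the paper asserts these without detail; your verification is sound (in particular, $|F_1+\imath F_2|^2+|F_1-\imath F_2|^2=2(|F_1|^2+|F_2|^2)\to\infty$ combined with $|F_3|<c_2$ gives the escape to infinity in $SL_2(\c)$, a closed subvariety of $\c^4$).
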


The projection of a null curve in $SL_2(\c)$ to the real hyperbolic 3-space $\h^3=SL_2(\c)/SU(2)$ is a {\em Bryant surface}, i.e., a surface with constant mean curvature one in $\h^3$  \cite{Br}. More specifically, considering the hyperboloid model of the hyperbolic space 
\[
	\h^3=\{(x_0,x_1,x_2,x_3) \in \mathbb{L}^4\colon x_0^2=1+x_1^2+x_2^2+x_3^2,\ \, x_0>0\},
\]
where $\mathbb{L}^4$ denotes the Minkowski $4$-space with the canonical Lorentzian metric of signature $(-+ + +)$,
and the canonical identification 
\[
(x_0,x_1,x_2,x_3) \equiv  \left( \begin{matrix}
x_0+x_3 & x_1 +\imath x_2 \cr
x_1-\imath x_2 & x_0-x_3
\end{matrix} \right) \in SL_2(\c),
\]
it follows that $\h^3=\{A \cdot \overline{A}^T\colon A\in SL_2(\c)\},$ where $\bar\cdot$ and $\cdot^T$ mean complex conjugation and transpose matrix. In this setting, Bryant's projection, $SL_2(\c) \to \h^3$, $A\mapsto A\cdot \overline{A}^T$, maps null curves in $SL_2(\c)$ to {\em conformal} Bryant immersions (i.e., of constant mean curvature one) in $\h^3$.  Furthermore, proper null curves in $SL_2(\c)$ project to proper Bryant surfaces since the fibration $SL_2(\c)\to \h^3$ has a compact fiber $SU(2)$. Conversely, every simply connected Bryant surface in $\h^3$ lifts to a null curve in $SL_2(\c)$ \cite{Br}.  Bryant surfaces are special among constant mean curvature surfaces in $\h^3$ for many reasons. For instance, they are connected to minimal surfaces in $\r^3$ by the so-called {\em Lawson correspondence} \cite{La} which implies that every simply connected Bryant surface is isometric to a minimal surface in $\r^3$, and vice versa. This class of surfaces has been a fashion research topic that received many important contributions in the last decade; see e.g.\ \cite{UY,Ro,CHR} for background on Bryant surface theory.

By projecting to $\h^3$ proper null curves $M\to SL_2(\c)$, 
furnished by Corollary \ref{cor:nullSL}, we obtain the following result.

\begin{corollary}
\label{cor:nullH3}
Every bordered Riemann surface is conformally equivalent to a properly immersed Bryant surface in $\h^3$.
\end{corollary}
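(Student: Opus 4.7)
\medskip
\noindent\textbf{Proof plan.} The statement is essentially a consequence of Corollary \ref{cor:nullSL} combined with the geometric properties of Bryant's projection recalled just before the statement, so my plan is to assemble these pieces rather than produce a new construction.

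First, I apply Corollary \ref{cor:nullSL} to obtain a proper holomorphic null embedding
\[
	G\colon M\longrightarrow SL_2(\c).
\]
Next, I let $\pi\colon SL_2(\c)\to\h^3$ denote Bryant's projection $A\mapsto A\cdot\overline{A}^T$, and consider the composition
\[
	X=\pi\circ G\colon M\longrightarrow \h^3.
\]
By the discussion preceding the statement, Bryant's projection maps null curves in $SL_2(\c)$ to conformal immersions of constant mean curvature one (i.e.\ Bryant immersions) in $\h^3$. Hence $X$ is a conformal Bryant immersion, and the conformal structure that $X$ induces on $M$ agrees with the original one. In particular $M$ is conformally equivalent to the Bryant surface $X\colon M\to\h^3$, which is what the statement requires.

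It remains to verify that $X$ is a proper map. The fibration $\pi\colon SL_2(\c)\to \h^3$ has compact fiber $SU(2)$, so $\pi$ is a proper map. Consequently, for any compact set $K\subset\h^3$, the preimage $\pi^{-1}(K)\subset SL_2(\c)$ is compact, and then $X^{-1}(K)=G^{-1}(\pi^{-1}(K))$ is compact in $M$ because $G$ is proper. Thus $X\colon M\to\h^3$ is a proper conformal Bryant immersion, completing the proof.

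There is no genuine obstacle here: all of the work has already been done in Theorem \ref{th:boundedcoordinate} (which secures a proper null curve in $\c^3$ with a coordinate function bounded away from both $0$ and $\infty$), in the transformation $\Tcal$ of \eqref{eq:T} that moves such null curves into $SL_2(\c)$, and in the classical fact that Bryant's projection turns null curves in $SL_2(\c)$ into conformal CMC-$1$ immersions in $\h^3$. My only task is to notice that properness is preserved under composition with $\pi$, which is immediate from compactness of the fiber $SU(2)$.
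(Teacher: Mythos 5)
Your proof is correct and follows exactly the same route as the paper: apply Corollary \ref{cor:nullSL} to get a proper null curve in $SL_2(\c)$, then compose with Bryant's projection, using compactness of the fiber $SU(2)$ to conclude properness of the resulting Bryant immersion. Nothing to add.
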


To the best of the authors' knowledge, these are the first examples of proper null curves in 
$SL_2(\c)$, and of Bryant surfaces in $\h^3$, with finite topology and {\em hyperbolic conformal type}, in the sense of carrying non-constant negative subharmonic functions. On the contrary, the geometry of regular ends of Bryant surfaces, which are conformally equivalent to a punctured disc, is very well understood. In particular, since properly embedded Bryant annular ends in $\h^3$ are regular (see Collin, Hauswirth, and Rosenberg \cite{CHR}), the examples in Corollary \ref{cor:nullH3} are not embedded.

The analogue of Corollary \ref{cor:nullH3} in the context of minimal surfaces in $\r^3$ was a longstanding open problem until 2003 when Morales \cite{Morales} gave the first example of a properly immersed minimal surface in $\r^3$ with finite topology and hyperbolic conformal type, thereby disproving an old conjecture of Sullivan (see Meeks \cite{Meeks}). More recently, it was shown that every open Riemann surface carries a proper conformal minimal immersion into $\r^3$ \cite{AL1} and a proper holomorphic null embedding into $\c^3$ \cite{AF2}. Thus, one is led to ask:

\begin{problem} Is every open Riemann surface conformally equivalent to a properly immersed Bryant surface in $\h^3$~? Even more, does every open Riemann surface admit a proper holomorphic null embedding (or immersion) into $SL_2(\c)$~?
\end{problem}

Obviously, not every open Riemann surface admits a null holomorphic immersion into $\c^3$ with a nonconstant bounded component function, as those in Theorem \ref{th:boundedcoordinate}; hence an affirmative answer to the above problem would require a different approach. On the other hand, this problem is purely complex analytic, in the sense that there are no topological obstructions. Indeed, combining our arguments in the proof of Theorem \ref{th:boundedcoordinate} with the Mergelyan theorem for null curves in $\c^3$ \cite{AF2,AL1} we obtain the following result.

\begin{theorem}
\label{th:giventopology}
Every orientable noncompact smooth real surface $N$ without boundary admits a complex structure $J$ such that the Riemann surface $(N, J)$ carries
\begin{enumerate}[1]
\item[$\bullet$] a proper holomorphic null embedding $(F_1,F_2,F_3)\colon (N, J)\to\c^3$ such that the function $F_3$ is bounded on $N$; hence
\item[$\bullet$] a proper holomorphic null embedding $(N, J)\to SL_2(\c)$, and
\item[$\bullet$] a proper conformal Bryant immersion $(N, J)\to \h^3$.
\end{enumerate}
\end{theorem}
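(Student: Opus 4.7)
The plan is to build the complex structure $J$ and the null embedding simultaneously by an exhaustion argument, combining Theorem \ref{th:boundedcoordinate} with the Mergelyan theorem for null curves in $\c^3$ established in \cite{AF2,AL1}. First, fix a smooth exhaustion $N=\bigcup_{j\ge 1} N_j$ by compact smoothly bounded subsurfaces with $N_j\subset\mathrm{Int}(N_{j+1})$, chosen so that each closure $\overline{N_{j+1}\setminus N_j}$ is either a disjoint union of annuli (no change of topology) or carries a single $1$-handle attached to $N_j$ (increasing the first Betti number by one). Fix any complex structure $J_1$ on a neighborhood $U_1$ of $N_1$ so that $(N_1,J_1)$ is a compact bordered Riemann surface, and apply Theorem \ref{th:boundedcoordinate} to produce an initial null holomorphic embedding $F_1=(F_{1,1},F_{1,2},F_{1,3})\colon U_1\to\c^3$ with $|F_{1,3}|<c$ on $U_1$ and $\|(F_{1,1},F_{1,2})\|>1$ on $bN_1$, where $c>0$ is a fixed constant (achieved by rescaling).

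For the inductive step, assume $J_j$ is defined on a neighborhood $U_j\supset N_j$ and a null holomorphic embedding $F_j\colon U_j\to\c^3$ satisfies $|F_{j,3}|<c$ on $U_j$ and $\|(F_{j,1},F_{j,2})\|>j$ on $bN_j$. Extend $J_j$ to a complex structure $J_{j+1}$ on a neighborhood $U_{j+1}\supset N_{j+1}$ by choosing any compatible complex structure on the added shell. Then invoke the Mergelyan theorem for null curves from \cite{AF2,AL1} to extend $F_j$ to a null holomorphic map $\wt F_j$ on $U_{j+1}$ that approximates $F_j$ on $N_j$ and whose periods over every loop in $H_1(N_{j+1};\z)$ vanish; these period corrections can be made arbitrarily $\Cscr^0$-small. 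Next, on the compact bordered Riemann surface $(N_{j+1},J_{j+1})$, apply the zig-zag Riemann--Hilbert procedure used in the proof of Theorem \ref{th:boundedcoordinate} (Theorem \ref{th:RH} together with spray gluing) to deform $\wt F_j$ into a null embedding $F_{j+1}$ with $\|(F_{j+1,1},F_{j+1,2})\|>j+1$ on $bN_{j+1}$, with $|F_{j+1,3}|<c$ still holding on $N_{j+1}$, and with $\|F_{j+1}-F_j\|_{\Cscr^0(N_j)}<\epsilon_j$ for a pre-chosen summable sequence $\sum_j\epsilon_j<\infty$. Since the Riemann--Hilbert deformations are localized near $bN_{j+1}$ and add only arbitrarily small increments to the third coordinate, the uniform bound $c$ on $|F_{j,3}|$ is preserved throughout the induction.

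In the limit, $J:=\lim_j J_j$ is a well-defined complex structure on $N$ (each $J_{j+1}$ agrees with $J_j$ near $N_j$), the summable approximation tolerances ensure that $F:=\lim_j F_j$ is a holomorphic null map on $(N,J)$ with $|F_3|\le c$, and the growth $\|(F_{j,1},F_{j,2})\|>j$ on $bN_j$ passes to the limit, giving properness of $(F_1,F_2)\colon N\to\c^2$ and hence of $F\colon N\to\c^3$. Embedding is preserved in the limit because each $F_{j+1}$ can be made a small $\Cscr^1$-perturbation of $F_j$ on $N_j$, and the general-position argument of \cite{AF2} keeps the map embedded. The $SL_2(\c)$ embedding and the proper Bryant immersion into $\h^3$ then follow by composing with $\Tcal$ from (\ref{eq:T}) and with the projection $A\mapsto A\overline{A}^T$, exactly as in the derivations of Corollaries \ref{cor:nullSL} and \ref{cor:nullH3}. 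The main obstacle is the handle-attaching step of the induction: one must choose the complex structure on the new handle and simultaneously extend $F_j$ as a null map whose period over the new homology generator vanishes, while still approximating $F_j$ on $N_j$ and keeping $|F_{j,3}|$ bounded. This is precisely what the Mergelyan theorem for null curves in $\c^3$ of \cite{AF2,AL1} is designed to handle through its period-control mechanism, after which the Riemann--Hilbert and exposing-boundary-points machinery from Theorem \ref{th:boundedcoordinate} pushes the boundary out in the $(F_1,F_2)$-plane without disturbing the third coordinate.
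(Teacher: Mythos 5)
Your strategy (exhaustion plus Mergelyan extension plus Riemann--Hilbert stretching) matches the spirit of the paper's argument, which establishes the result via the more precise Theorem~\ref{th:topology}. However, there is a genuine gap in the inductive step that mirrors a subtlety the paper handles carefully.

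The key problem is your extension $\wt F_j$ on the shell $N_{j+1}\setminus N_j$. The Mergelyan theorem for null curves approximates a generalized null curve on an \emph{admissible} compact set --- say $\overline{N}_j$ together with an attached arc $\gamma$ generating the new homology --- and the $\Cscr^1$-closeness is only guaranteed on a \emph{thin neighborhood} of $\overline{N}_j\cup\gamma$. It gives no control whatsoever over $\mgot(\wt F_j)=\max\{|\langle\wt F_j,V_1\rangle|,|\langle\wt F_j,V_2\rangle|\}$ on the rest of $N_{j+1}\setminus N_j$. But Lemma~\ref{lem:main2} requires $\mgot(\wt F_j)>s$ on $\overline{N}_{j+1}\setminus\Rcal$ for some $\Rcal\in\Bscr(N_{j+1})$, and its conclusion (L3) only propagates whatever lower bound $s$ you start with. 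Without $\mgot(\wt F_j)\gtrsim j$ on the full shell, the inductive hypothesis does not give properness of the limit: the estimate $\mgot>j$ only on $bN_j$ (which is all you state) is insufficient, since $\mgot$ of the limiting map could dip arbitrarily low in the interior of each shell. In the paper's version of the induction, the analogue of your $N_j$ is a domain $\Rcal_n$ that is \emph{chosen after} the Mergelyan step to be a thin neighborhood of $\overline{\Rcal}_{n-1}\cup\gamma$ --- precisely so that $\mgot>s_{n-1}$ holds on all of $\overline{\Rcal}_n\setminus\Rcal_{n-1}$ by continuity. These dynamically-chosen thin domains exhaust a subdomain $N_0$ of $(N,J_0)$ that in general is a \emph{proper} subset of $N$, and the theorem is then completed by a diffeomorphism $\phi\colon N\to N_0$ (identity near $K$) that pulls back $J_0|_{N_0}$ to the final complex structure $J$. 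Your claim to define $J$ directly as $\lim_j J_j$ on a fixed smooth exhaustion glosses over this: either the complex structure on each shell must be chosen to make the shell conformally thin in exactly the way the Mergelyan approximation demands (not ``any compatible complex structure''), or you need the diffeomorphism transport that the paper uses. Either way, the properness and the $\mgot$-control on the shells require an explicit argument that your proposal omits.
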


Theorem \ref{th:intro} also has interesting ramifications concerning null curves in $SL_2(\c)$ and Bryant surfaces in $\h^3$. Note that the transformation $\Tcal$ \eqref{eq:T} maps complete bounded null curves in $\c^3 \cap \{|z_3|>c\}$ for any $c>0$ into complete bounded null curves in $SL_2(\c)$ \cite{MUY1}. Furthermore, the Riemannian metric on an open Riemann surface $M$, induced by a null holomorphic immersion $M\to SL_2(\c)$, is twice the one induced by its Bryant's projection $M\to\h^3$; cf.\ \cite{Br,UY}. Hence Theorem \ref{th:intro} implies the following corollary. 

\begin{corollary}\label{co:SLbounded}
Every bordered Riemann surface admits a complete null holomorphic embedding into 
$SL_2(\c)$ with bounded image, and it is conformally equivalent to a complete bounded immersed Bryant surface in $\h^3$.
\end{corollary}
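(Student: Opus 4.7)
The plan is to bootstrap directly from Theorem \ref{th:intro} through the explicit biholomorphism $\Tcal$ in \eqref{eq:T} and the Bryant projection $A\mapsto A\overline{A}^T$, with a simple translation in $\c^3$ to place the image inside the domain of $\Tcal$.

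First I would invoke Theorem \ref{th:intro} to obtain a complete proper null holomorphic embedding $F=(F_1,F_2,F_3)\colon M\to\c^3$ whose image lies in the unit ball. Since translations preserve both the null condition (the derivative is unchanged) and the completeness of $F^*ds^2$, the map $\wt F:=F+(0,0,c)\colon M\to\c^3$ is still a complete null holomorphic embedding with bounded image for any constant $c$; choosing for instance $c=2$ guarantees that $|\wt F_3|\geq 1$ on $M$, so $\wt F(M)\subset\c^3\cap\{|z_3|>1/2\}\setminus\{z_3=0\}$. This is precisely the setting in which $\Tcal$ in \eqref{eq:T} is a biholomorphism onto its image in $SL_2(\c)$.

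Next I would set $G:=\Tcal\circ \wt F\colon M\to SL_2(\c)$. Since $\Tcal$ is a biholomorphism between $\c^3\setminus\{z_3=0\}$ and $SL_2(\c)\setminus\{z_{11}=0\}$ that carries null curves to null curves (Mart\'in--Umehara--Yamada \cite{MUY1}), $G$ is a holomorphic null embedding. Boundedness of $G(M)\subset SL_2(\c)\subset\c^4$ follows from the continuity of $\Tcal$ on the compact subset $\overline{\wt F(M)}\subset\c^3\cap\{|z_3|\geq 1\}$. Completeness of $G$ is exactly the content of the statement from \cite{MUY1} recalled immediately before the corollary: $\Tcal$ sends complete bounded null curves contained in $\{|z_3|>c\}$ to complete bounded null curves in $SL_2(\c)$. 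This settles the first assertion.

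For the second assertion I would compose with the Bryant projection $\pi\colon SL_2(\c)\to\h^3$, $\pi(A)=A\cdot\overline{A}^T$, to obtain the conformal Bryant immersion $\pi\circ G\colon M\to\h^3$. Continuity of $\pi$ turns the bounded image $G(M)$ into a bounded subset of $\h^3$. The metric identity $G^*ds^2=2(\pi\circ G)^*ds^2_{\h^3}$ recorded in \cite{Br,UY} transfers completeness from $G$ to $\pi\circ G$. Thus $\pi\circ G$ is a complete bounded immersed Bryant surface in $\h^3$ conformally equivalent to $M$, as desired.

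There is no serious obstacle here beyond arranging the separation $|F_3|>c>0$ so that $\Tcal$ can be applied; the rest is a direct combination of Theorem \ref{th:intro} with the two standard facts about $\Tcal$ and the Bryant projection that are already recorded in the text.
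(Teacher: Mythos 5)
Your proposal is correct and follows essentially the same route the paper indicates just before the corollary: translate the complete bounded null embedding from Theorem \ref{th:intro} into the region $\{|z_3|>c\}$, apply $\Tcal$ using the Mart\'in--Umehara--Yamada fact that it carries complete bounded null curves in that region to complete bounded null curves in $SL_2(\c)$, and then project to $\h^3$ via the Bryant projection, transferring completeness through the metric identity from \cite{Br,UY}. You merely make the paper's implicit translation step explicit, which is a welcome clarification but not a different argument.
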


Complete bounded immersed null holomorphic discs in $SL_2(\c)$, hence complete bounded simply-connected Bryant surfaces, were provided in \cite{FMUY,MUY1}. Existence results in the line of Corollary \ref{co:SLbounded}, for surfaces with arbitrary topology but without control on the conformal structure on the Riemann surface, can be found in  \cite{AL2,AF2}.

We expect that our solutions to the Riemann-Hilbert problem for null curves (Theorem \ref{th:RH}) will be useful for further developments in this classical subject.


\section{Preliminaries}\label{sec:prelim}

We denote by $\langle\cdot,\cdot\rangle$, $|\cdot|$, $\dist(\cdot,\cdot)$, and $\length(\cdot)$, respectively, the hermitian inner product, norm, distance, and length on $\c^n$, $n\in\n$. Hence $\Re \langle\cdotp,\cdotp\rangle$ is the Euclidean inner product on $\r^{2n}\cong\c^n$. 
Given $u\in\c^n$, we set $\langle u\rangle^\bot=\{v\in\c^n\colon \langle u,v\rangle=0\}$ and $\span\{u\}=\{\zeta u\colon \zeta\in\c\}$. Obviously, if $u,v\in\c^n$, $u\neq 0$, then $v+\langle u\rangle^\bot$ is the complex affine hyperplane in $\c^n$ passing through $v$ and orthogonal to $u$. 

If $K$ is a compact topological space and $f\colon K\to\c^n$ is a continuous map, we denote by $\|f\|_{0,K}=\sup_{x\in K} |f(x)|$ the maximum norm of $f$ on $K$. If $M$ is a smooth manifold, $K\subset M$, and $f$ is of class $\Cscr^1$, we denote by $\|f\|_{1,K}$ the $\Cscr^1$-maximum norm of $f$ on $K$, measured with respect the expression of $f$ in a system of local coordinates in some fixed finite open cover of $K$. Similarly we define these norms for maps $M\to X$ to a smooth manifold $X$ by using a fixed cover by coordinate patches on both manifolds.


\subsection{Bordered Riemann surfaces}
\label{subsec:Riemann}

If $M$ is a topological surface with boundary, we denote by $bM$ the $1$-dimensional topological manifold determined by the boundary points of $M$. We say that a surface is {\em open} if it is non-compact and does not contain any boundary points. 

A {\em Riemann surface} is an oriented surface together with the choice of a complex structure.

An open connected Riemann surface, $M$, is said to be a {\em bordered Riemann surface} if $M$ is the interior of a compact one dimensional complex manifold, $\overline{M}$, with smooth boundary $bM\ne \emptyset$ consisting of fini\-tely many closed Jordan curves. The closure $\overline M=M\cup bM$ of such $M$ is called a {\em compact bordered Riemann surface}.

A domain $D$ in a bordered Riemann surface $M$ is said to be a {\em bordered domain} if it has smooth boundary; such $D$ is itself a bordered Riemann surface with the complex structure induced from $M$. It is classical that every bordered Riemann surface is biholomorphic to a relatively compact bordered domain in a larger Riemann surface. We denote by $\Bscr(M)$ the family of relatively compact bordered domains $\Rcal\Subset M$ such that $M$ is a tubular neighborhood of $\overline{\Rcal}$; that is to say, $\overline{\Rcal}$ is {\em holomorphically convex} (also called {\em Runge}) in $M$, and $M \setminus  \overline{\Rcal}$ is the union of finitely many paiwise disjoint open annuli. 

Let $M$ be a bordered Riemann surface and $X$ be a complex manifold. Given a number $r\ge 0$, we denote by $\Ascr^r(M,X)$ the set of all maps $f\colon \overline M \to X$ of class $\Cscr^r$ that are holomorphic on $M$. For $r=0$ we write $\Ascr^0(M,X)=\Ascr(M,X)$. When $X=\c$, we write $\Ascr^r(M,\c) = \Ascr^r(M)$ and $\Ascr^0(M) = \Ascr(M)$. Note that $\Ascr^r(M,\c^n) = \Ascr^r(M)^n$ is a complex Banach space. For any complex manifold $X$ and any number $r\ge 0$, the space $\Ascr^r(M,X)$ carries a natural structure of a complex Banach manifold \cite{F2007}. 

Let $M$ be a bordered Riemann surface of genus $g$ and $m$ ends; i.e., $\overline M$ has $m$ boundary components. The 1-st homology group $H_1(M;\z)$ is then a free abelian group on $l=2g+m-1$ generators which are represented by closed, smoothly embedded loops $\gamma_1,\ldots,\gamma_l\colon S^1 \to M$ that only meet at a chosen base point $p\in M$. Let $\Gamma_j=\gamma_j(S^1)\subset M$ denote the trace of $\gamma_j$. Their union $\Gamma=\bigcup_{j=1}^l \Gamma_j$ is a wedge of $l$ circles meeting at $p$.


\subsection{Null curves in $\c^3$}
\label{subsec:null}

Denote by $\Agot$ the quadric subvariety \eqref{eq:Agot} of $\c^3$. Vectors in $\Agot\setminus\{0\}$ are said to be {\em null}. Let $\imath=\sqrt{-1}$. Note that 
\[
	\Agot\setminus\{0\}=\{(1-\xi^2,\imath(1+\xi^2),2\xi)\zeta\colon \xi\in\c,\, \zeta\in\c\setminus\{0\}\};
\]
hence $\Agot$ is a complex conical submanifold of $\c^3$ contained in no finite union of real or complex hyperplanes. Every complex linear hyperplane of $\c^3$ intersects $\Agot\setminus\{0\}$.

\begin{definition} 
\label{def:null}
Let $M$ be an open Riemann surface. A holomorphic immersion $F\colon M\to\c^3$ is said to be {\em null} if its derivative $F'$ with respect to any local holomorphic coordinate on $M$ assumes values in $\Agot\setminus\{0\}$. The same definition applies if $M$ is a compact bordered Riemann surface with smooth boundary $bM \subset M$ and $F$ is of class $\Cscr^1(M)$ and holomorphic in the interior $\mathring M= M\setminus bM$.
\end{definition}

Given a bordered Riemann surface $M$, we denote by $\IA(M)$ the set of all $\Cscr^1$ null immersions $\overline{M}\to\c^3$ that are holomorphic in $M$.  If $F=(F_1,F_2,F_3)\in \IA(M)$ then $(dF_1)^2+(dF_2)^2+(dF_3)^2$ vanishes identically on $M$; here $d$ denotes the complex differential. Conversely, if $\Phi=(\phi_1,\phi_2,\phi_3)$ is an exact vectorial holomorphic $1$-form and $\phi_1^2+\phi_2^2+\phi_3^2$ vanishes everywhere on $M$, then $\Phi$ integrates to a null holomorphic immersion
\[
		F\colon M\to\c^3,\quad F(x)=\int^x \Phi,\quad x\in M.
\] 
The triple $\Phi=dF$ is said to be the {\em Weierstrass representation} of $F$. 

We denote by $\sigma_F^2$ the conformal Riemannian metric in $M$ induced by the Euclidean metric of $\c^3$ via a holomorphic immersion $F\colon M\to \c^3$; i.e.,
\[
		\sigma_F^2(x;v) = \langle dF_x(v), dF_x(v)\rangle=|dF_x(v)|^2, \quad x\in M,\ v\in T_x M.
\]
We denote by $\dist_{(M,F)}(\cdot,\cdot)$ the distance function in the Riemannian surface $(M,\sigma_F^2)$.

An arc $\gamma\colon [0,1)\to M$ is said to be {\em divergent} if $\gamma$ is a proper map; that is, if the point $\gamma(t)$ leaves any compact subset of $M$ when $t\to 1$.

\begin{definition}
\label{def:complete_imm}
Let $M$ be an open Riemann surface. A null holomorphic immersion $F\colon M\to\c^3$ is said to be {\em complete} if $(M,\sigma_F^2)$ is a complete Riemannian surface; that is, if the image curve $F\circ \gamma$ in $\c^3$ has infinite Euclidean length for any divergent arc $\gamma$ in $M$. 
\end{definition}

If $F\colon M\to\c^3$ is a null holomorphic immersion, then its real and imaginary parts $\Re F, \Im F\colon M\to\r^3$ are conformal minimal immersions (cf.\ Osserman \cite{Osserman}); furthermore, $\Re F$ and $\Im F$ are isometric to each other, and the Riemannian metric in $M$ induced by the Euclidean metric of $\r^3$ via them equals $\frac{1}{2}\, \sigma_F^2$ \cite[p.\ 65]{Osserman}. In particular, if one of the immersions $F$, $\Re F$, and $\Im F$ is complete, then so are the other two.


\section{Riemann-Hilbert problem for null curves in $\c^3$}
\label{sec:RH}

In this section we construct approximate solutions to Riemann-Hilbert boundary value problems for holomorphic null curves in $\c^3$. Theorem \ref{th:RH} below is the main new analytic tool for the construction of null curves developed in this paper. With this result at hand, we will prove Theorem \ref{th:intro} by adapting the method developed in \cite{AF1}. The same result is the key ingredient in the proof of Theorem \ref{th:boundedcoordinate}. We expect that this technique will find many subsequent applications.

We proceed in three steps. First, Lemma \ref{lem:RH} gives an approximate solution to the Riemann-Hilbert problem when the central curve is a null disc. In \S \ref{ss:families} we consider families of null discs and obtain an estimate that is used in the general case. Finally, Theorem \ref{th:RH} in \S\ref{ss:general} pertains to the case of an arbitrary bordered Riemann surface as the central null curve. 


\subsection{Riemann-Hilbert problem for null discs}
\label{ss:disc}
Recall that $\d=\{z\in\c:|z|<1\}$ is the unit disc and $\Agot$ is the null quadric \eqref{eq:Agot}. 

%
%
%
\begin{lemma} 
\label{lem:RH}
Let $F\colon\overline{\d}\to\c^3$ be a null holomorphic immersion (cf.\ Def.\ \ref{def:null}), let $\vartheta\in\Agot\setminus\{0\}$ be a null vector (the {\em direction vector}), let $\mu\colon b{\d}=\{\zeta\in\c\colon |\zeta|=1\}\to \r_+ := [0,+\infty)$ be a continuous function (the {\em size function}), and set
\[
\varkappa\colon b{\d}\times\overline{\d}\to\c^3,\quad \varkappa(\zeta,\xi)=F(\zeta)+\mu(\zeta)\,\xi\,\vartheta.
\]
Given numbers $\epsilon>0$ and $0<r<1$, there exist a number $r'\in [r,1)$ and a null holomorphic immersion $G\colon\overline{\d}\to\c^3$ satisfying the following properties:
\begin{enumerate}[\it i)]
\item $\dist(G(\zeta),\varkappa(\zeta,b{\d}))<\epsilon$ for all $\zeta\in b\d$, 
\item $\dist(G(\rho\zeta),\varkappa(\zeta,\overline{\d}))<\epsilon$ for all $\zeta\in b\d$ and all $\rho\in [r',1)$, and 
\item $G$ is $\epsilon$-close to $F$ in the $\Cscr^1$ topology on $\{\zeta\in\c\colon |\zeta|\leq r'\}$.
\end{enumerate}
Furthermore, if $I$ is a compact arc in $b\d$, the size function $\mu$ vanishes everywhere on $b\d\setminus  I$, and $U$ is an open neighborhood of $I$ in $\overline{\d}$, then 
\begin{enumerate}[\it i)]
\item[\it iv)] one can choose $G$ to be $\epsilon$-close to $F$ in the $\Cscr^1$ topology on $\overline{\d}\setminus  U$.
\end{enumerate}
\end{lemma}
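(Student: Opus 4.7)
Plan.
The construction combines a scalar Riemann--Hilbert approximation on $b\d$ with a deformation on the null cone $\Agot$ that preserves the null condition. I would work on the level of the derivative $F'\colon\overline\d\to\Agot\setminus\{0\}$: deform it within $\Agot$ to an approximately winding map, and integrate (which is unobstructed on the simply connected disc) to recover $G$.

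\textbf{Step 1 (scalar Riemann--Hilbert data).} For a large integer $N$, use a Cauchy--type integral or a truncated Fourier expansion to produce $h\in\Ascr(\d)$ with $h(\zeta)\approx\mu(\zeta)\zeta^N$ uniformly on $b\d$ and with $|h|+|h'|=O((r')^N)$ on $\{|z|\le r'\}$; the large power $\zeta^N$ shifts essentially all Fourier modes of $\mu$ to non-negative frequencies, and the interior smallness follows from Cauchy estimates at the origin. As $\zeta$ traverses $b\d$, the point $F(\zeta)+h(\zeta)\vartheta$ has modulus $\approx\mu(\zeta)$ in the $\vartheta$--component and rapidly winding phase, so it lies within $\epsilon/2$ of $\bigcup_\zeta\varkappa(\zeta,b\d)$; if $F+h\vartheta$ were itself a null immersion, (i)--(iii) would already follow.

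\textbf{Step 2 (null deformation).} The map $F+h\vartheta$ is in general not null, since $F'+h'\vartheta\notin\Agot$ unless $\langle F',\vartheta\rangle\equiv 0$. To restore nullity I would pass to the Weierstrass parametrization $F'=f\,\Omega(g)$ with $\Omega(g)=(1-g^2,\imath(1+g^2),2g)$, write $\vartheta=\Omega(g_1)$ for the corresponding $g_1$, and construct a holomorphic spray $\Phi_w\colon\overline\d\to\Agot\setminus\{0\}$ with $\Phi_0=F'$ and with $\partial_w\Phi_w\big|_{w=0}$ dominating, in a suitable sense, the projective direction of $\vartheta$ at each boundary point. I would then choose a holomorphic parameter $w(z)$ so that $\Phi_{w(z)}(z)$ approximates $F'(z)+h'(z)\vartheta$ to leading order, and set
\[
G(z)=F(0)+\int_0^z \Phi_{w(\zeta)}(\zeta)\,d\zeta.
\]
Since $\Phi_{w(\zeta)}(\zeta)\in\Agot\setminus\{0\}$ for every $\zeta$ and the disc is simply connected, $G$ is a well-defined null holomorphic immersion. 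This is exactly the ``gluing of holomorphic sprays applied to derivatives of null curves'' referenced in the introduction.

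\textbf{Step 3 (estimates and case (iv)).} Condition (iii) follows from the interior smallness of $h$ and $h'$, which forces $w$ and $\Phi_{w(\cdot)}-F'$ small on $\{|z|\le r'\}$, hence $G$ is $\Cscr^1$--close to $F$ there. Conditions (i) and (ii) follow from the boundary winding of $h\vartheta$ and the approximate identity $\Phi_{w(z)}\approx F'+h'\vartheta$ near $b\d$, by choosing $N$ large enough to drive the error below $\epsilon$. For (iv), if $\mu$ vanishes off the arc $I\subset b\d$, the Cauchy integral defining $h$ is supported on $I$ and decays like $1/\dist(z,I)$; choosing $U\supset I$ then forces $h,h'$, and hence $w(z)$, to be uniformly small on $\overline\d\setminus U$, giving the required $\Cscr^1$--closeness of $G$ and $F$ there.

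\textbf{Main obstacle.} The crux is Step 2: producing a spray $\Phi_w$ of $\Agot$--valued maps whose parameter $w(z)$ can be made large enough to realize the boundary oscillation of $F'+h'\vartheta$, while keeping $\Phi_{w(z)}$ exactly in $\Agot$ and controlling the deformation uniformly in $z$. A naive algebraic ansatz $G=F+h\vartheta+k\vartheta^\star$ with an auxiliary null vector $\vartheta^\star\in\Agot\setminus\c\vartheta$ leads to the first-order relation
\[
k' = -\frac{h'\langle F',\vartheta\rangle}{\langle F',\vartheta^\star\rangle + h'\langle\vartheta,\vartheta^\star\rangle}
\]
from $\langle G',G'\rangle=0$, but the denominator generically vanishes somewhere on $\overline\d$ when $|h'|$ grows to order $N$, and in any case the resulting $\|k\|_{0,\overline\d}$ is not controlled uniformly in $N$ (in the regime $|h'|\gg|\langle F',\vartheta^\star\rangle|$ the quotient tends to the fixed, nonvanishing holomorphic function $-\langle F',\vartheta\rangle/\langle\vartheta,\vartheta^\star\rangle$). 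Parametrizing the deformation intrinsically through the Weierstrass data bypasses these issues but requires the $\dibar$--gluing of holomorphic sprays with bounded solution operators on strongly pseudoconvex domains from \cite{DF2007,F2007}, adapted here to the quadric $\Agot\setminus\{0\}$ with period control that is automatic on the disc.
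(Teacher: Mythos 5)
Your framework is right — work on the derivative $F'\colon\overline\d\to\Agot\setminus\{0\}$, parametrize the null cone, integrate on the simply connected disc — but the central idea of Step 2 cannot be made to work as stated, and this is precisely where the paper's proof does something different. You propose to choose a holomorphic parameter $w(z)$ so that $\Phi_{w(z)}(z)\approx F'(z)+h'(z)\vartheta$, with $\Phi_w$ taking values in $\Agot\setminus\{0\}$. But $F'+h'\vartheta$ has $\c$-bilinear square $2h'\langle F',\vartheta\rangle_{\rm bil}$, which is generically nonzero and of size $\sim|h'|$; on $b\d$ you have arranged $|h'|$ to be \emph{large} (of order $N$). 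So near the boundary $F'+h'\vartheta$ is far from the null cone, and no $\Agot$-valued $\Phi_{w(z)}(z)$ can approximate it pointwise. Your Step 3 estimates for (i) and (ii) invoke this "approximate identity" of derivatives, so they do not go through. Your correct diagnosis of why the ansatz $G=F+h\vartheta+k\vartheta^\star$ fails is actually symptomatic of the same issue, and the proposed rescue via "$\dibar$-gluing of sprays" is both unnecessary here (there is no period problem or Cartan pair on $\d$ — the paper reserves that machinery for Theorem \ref{th:RH}) and does not address the pointwise obstruction.

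The paper's actual proof of Lemma \ref{lem:RH} is elementary and does \emph{not} try to make $G'$ close to $F'+h'\vartheta$; only the integrals $G$ and $F+h\vartheta$ end up close, which is all that (i)--(iii) require. The mechanism is a scaling trick at the level of the two-sheeted cover $\pi(u,v)=(u^2-v^2,\imath(u^2+v^2),2uv)$ of $\Agot\setminus\{0\}$. Write $F'=\pi(u,v)$, $\vartheta=\pi(p,q)$, approximate $\sqrt{\mu}$ (not $\mu$) by a Laurent polynomial $\tilde\eta$, and set $u_n=u+\sqrt{2n+1}\,\tilde\eta\,\xi^n p$, $v_n=v+\sqrt{2n+1}\,\tilde\eta\,\xi^n q$. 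Expanding $\pi(u_n,v_n)$ quadratically yields $F'+a^2\vartheta+2a(\cdots)$ with $a=\sqrt{2n+1}\,\tilde\eta\,\xi^n$: the term $a^2\vartheta=(2n+1)\tilde\eta^2\xi^{2n}\vartheta$ is large but integrates to $\approx\tilde\eta^2(\zeta)\zeta^{2n+1}\vartheta$ (the factor $2n+1$ exactly balances $\int_0^\zeta\xi^{2n}d\xi$), while the cross term is of order $\sqrt{2n+1}$, so its integral decays like $\sqrt{2n+1}/(n+1)\to 0$. Thus $G_n\approx F+\tilde\eta^2\zeta^{2n+1}\vartheta$ uniformly on $\overline\d$ even though $G_n'$ is nowhere near $F'+h'\vartheta$. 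The $\sqrt{2n+1}$ normalization is the missing key; without it the quadratic and cross terms in the integral would not separate. Property (iv) is then obtained by extending $F$ via Mergelyan to a slightly larger simply connected domain that bulges outward off $I$ and applying the already-proved case there — a purely geometric reduction, again with no sprays.
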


\begin{remark}
\label{rem:reparat}
Lemma \ref{lem:RH} holds on any smoothly bounded simply connected domain $D\Subset \c$. A conformal map $D\to \d$, furnished by the Riemann mapping theorem, extends to a smooth diffeomorphism $\phi\colon \overline D\to\overline \d$; by using such $\phi$ we can transport the Riemann-Hilbert data from $D$ to $\d$ and then transport a solution on $\d$ back to a solution on $D$. Choose an annular neighborhood $A\subset \overline D$ of the boundary curve $C=bD$ and a retraction $\rho\colon A\to C$. 
A natural statement of the lemma, which is clearly equivalent to the above statement when $D=\d$, is then the following:

\noindent (*) Given a number $\epsilon>0$ and a compact set $K\subset D$, there exist a null curve $G\in \IA(D)$ and a compact set $K'\subset D$ such that $K\subset K'$, $K'\cup A=\overline D$, and the following hold:
\begin{enumerate}[\it i')]
\item $\dist(G(\zeta),\varkappa(\zeta,b{\d}))<\epsilon$ for all $\zeta\in bD$,
\item $\dist \bigl(G(\rho(\zeta),\varkappa(\rho(\zeta),\overline{\d})\bigr) < \epsilon$ for all $\zeta\in \overline D\setminus K'$, and 
\item $G$ is $\epsilon$-close to $F$ in the $\Cscr^1$ topology on $K'$. 
\end{enumerate}
\end{remark}

\begin{proof}[Proof of Lemma \ref{lem:RH}]
We begin by constructing a null holomorphic immersion $G\colon\overline \d\to\c^3$ enjoying properties {\it i)}, {\it ii)}, and {\it iii)}; condition {\it iv)} will be considered in the second part of the proof. Replacing $\mu$ by $\mu |\vartheta|$ and $\vartheta$ by $\vartheta/|\vartheta|$ we may assume that $|\vartheta|=1$. 

Consider the unbranched two-sheeted holomorphic covering
\[
\pi\colon\c^2\setminus\{(0,0)\}\to \Agot\setminus\{0\},\quad \pi(u,v)=\big(u^2-v^2,\imath(u^2+v^2),2uv\big).
\]
Since $\overline{\d}$ is simply connected, the derivative $F'\colon\overline{\d}\to\Agot\setminus\{0\}$ admits a holomorphic $\pi$-lifting $(u,v)\colon\overline{\d}\to\c^2\setminus\{(0,0)\}$ such that
\begin{equation}\label{eq:F'}
		F'=\pi(u,v)=\big(u^2-v^2,\imath(u^2+v^2),2uv\big).
\end{equation}
Likewise, there exists a vector $(p,q)\in\c^2\setminus\{(0,0)\}$ such that
\begin{equation}\label{eq:vartheta}
		\vartheta=\pi(p,q)=\big(p^2-q^2,\imath(p^2+q^2),2pq\big) \in \Agot\setminus\{0\}.
\end{equation}
By compactness of $\overline{\d}$ we can fix a number $C_0>0$ satisfying \begin{equation}\label{eq:boundM}
		\|(pu-qv,pu+qv, qu+pv)\|_{0,\overline{\d}}< C_0.
\end{equation}

Set $\eta=\sqrt{\mu}\colon b{\d}\to \r_+$. We approximate $\eta$ uniformly on $b{\d}$ 
by a Laurent polynomial  
\begin{equation}
\label{eq:eta}
		\tilde\eta(\zeta)=\sum_{j=1}^N A_j \zeta^{j-m}
\end{equation}
with a pole of order $m-1$ at the origin, where the $A_j$'s are complex numbers and $N$ and $m$ natural ones. The function $\tilde\eta^2$ then approximates $\mu$ uniformly on $b{\d}$ and is of the form
\begin{equation}\label{eq:B}
			\tilde\eta^2(\zeta)=\sum_{j=1}^{2N} B_j \, \zeta^{j-2m};\quad B_1,\ldots,B_{2N}\in\c.
\end{equation}
Observe that for any integer $k\ge m$ the function $\zeta\mapsto \zeta^k \tilde\eta(\zeta)$ is a holomorphic polynomial fixing the origin  $0\in\c$. If the approximation of $\eta$ by $\tilde\eta$ is close enough on $b\d$, there exists a number $r'\in [r,1)$ such that
\begin{equation}\label{eq:r'}
		|F(\rho\zeta)-F(\zeta)| + |\tilde\eta^2(\rho\zeta)-\mu(\zeta)|<\epsilon/2 \quad 
		\forall \zeta\in b{\d},\; \rho\in[r',1).
\end{equation}

For any integer $n\ge m$ we define functions $u_n,\, v_n \colon \overline\d \to\c$ of class $\Ascr(\d)$ by
\begin{equation}\label{eq:tildef}
	u_n(\xi) =  u(\xi)+\sqrt{2n+1} \,\tilde\eta(\xi) \, \xi^n p, \quad 
	v_n(\xi) =  v(\xi)+\sqrt{2n+1} \,\tilde\eta(\xi) \, \xi^n q.
\end{equation}
Consider the map  of class $\Ascr(\d)$ given by 
\begin{equation}\label{eq:Phi}
		\Phi_n = \pi(u_n,v_n) =
		\big(u_n^2-v_n^2,\imath (u_n^2+v_n^2),2u_nv_n \big)
	  \colon \overline{\d}\to \Agot\subset \c^3.
\end{equation}
After an arbitrarily small change of the vector $(p,q)\in \c^2$ we may assume that the map $(u_n, v_n) \colon\overline{\d}\to\c^2$ does not assume the value $(0,0)$ for any sufficiently large $n\in \n$. Indeed, it suffices to choose $(p,q)$ such that $(u(\xi),v(\xi))+s(p,q) \ne (0,0)$ for all $\xi\in b\d$ and $s\in\c$, a condition which holds by dimension reasons for almost all vectors $(p,q)\in\c^2$. By continuity the same is then true for all $\xi\in \overline \d$ satisfying $r\le |\xi|\le 1$ for some $r<1$ sufficiently close to $1$; hence $(u_n(\xi),	v_n(\xi)) \ne (0,0)$ for such $\xi$ and for any $n\in \n$. Since $\sqrt{2n+1}\, |\xi^n| \to 0$ uniformly on $\{|\xi|\le r\}$ as $n\to +\infty$, we see that $(u_n,v_n)$ converges to $(u,v)$ uniformly on $|\xi|\le r$ as $n\to +\infty$, and hence the range of the map $(u_n,v_n)$ avoids the value $(0,0)$ for all sufficiently big $n$. For such $(p,q)\in\c^2\setminus\{(0,0)\}$ and $n\in \n$, the map $\Phi_n=\pi(u_n,v_n)$ (\ref{eq:Phi}) has range in $\Agot\setminus\{0\}$, and hence it furnishes a null holomorphic immersion 
$G_n \in\IA(\d)$ by the expression
\[
	G_n(\zeta)=F(0)+\int_0^\zeta \Phi_n(\xi)\, d\xi,  \qquad \zeta\in \overline\d.
\]
Taking into account \eqref{eq:F'} and \eqref{eq:vartheta}, a straightforward computation gives
\begin{equation}\label{eq:G}
		G_n(\zeta)=F(\zeta)+ {\bf B}_n(\zeta)\, \vartheta + {\bf A}_n(\zeta),
\end{equation}
where
\[
	{\bf B}_n(\zeta) = (2n+1)\sum_{j=1}^{2N} \int_0^\zeta B_j \xi^{2n+j-2m}\,d\xi 
	=\sum_{j=1}^{2N}  \frac{2n+1}{2n+1+j-2m}   {B_j\, \zeta^{2n+1+j-2m}}\in\c, 
\] 
\[
	{\bf A}_n(\zeta) = 2\sqrt{2n+1} \int_0^\zeta \sum_{j=1}^N  
	A_j\, \xi^{n+j-m} \big(u(\xi)(p,\imath p,q)+v(\xi)(-q,\imath q,p) \big)\, d\xi\in\c^3.
\]

Since the coefficients $(2n+1)/(2n+1+j-2m)$ in the sum for ${\bf B}_n$ converge to $1$ as $n\to +\infty$, there exists an $n_0\in \n$ such that for all $n\ge n_0$ we have
\begin{equation}\label{eq:estimateB}
	\sup_{|\zeta|\le 1} \big| {\bf B}_n(\zeta) - \zeta^{2n+1}\tilde\eta^2(\zeta)\big| < \epsilon/4.
\end{equation}
(See also the expression \eqref{eq:B} for $\tilde\eta^2(\zeta)$.) 
The remainder term ${\bf A}_n(\zeta)$ in \eqref{eq:G} can be estimated as follows; the first inequality uses \eqref{eq:boundM} and the last one $|\zeta|\leq 1$:
\begin{equation}\label{eq:estimateA}
	|{\bf A}_n(\zeta)| \leq 2\sqrt{2n+1} \, C_0 \sum_{j=1}^N  |A_j| 
	\int_0^{|\zeta|} |\xi|^{n+j-m}\, d|\xi| 
		\le 2 C_0 \sum_{j=1}^N  \frac{\sqrt{2n+1}}{n+1+j-m}|A_j|.
\end{equation}
It follows that $|{\bf A}_n|\to 0$ uniformly on $\overline{\d}$ as $n\to +\infty$. In view of \eqref{eq:G} and \eqref{eq:estimateB}, we obtain for big enough $n\in\n$ that
\begin{equation}\label{eq:Gsolves}
	 \sup_{|\zeta|\le 1} \big|G_n(\zeta)- \big(F(\zeta) + \tilde\eta^2(\zeta)\zeta^{2n+1}\vartheta\big)\big|<\epsilon/2. 
\end{equation}

Let us verify that the null disc $G=G_n:\overline\d\to\c^3$ satisfies conditions {\it i)}, {\it ii)}, 
and {\it iii)} in the lemma provided that $n$ is chosen sufficiently big. 
For $\zeta=e^{\imath s}\in b{\d}$ one has the estimate
\begin{eqnarray*}
	\dist(G(\zeta),\varkappa(\zeta,b{\d})) & \leq & 
	\big|G(e^{\imath s})-\varkappa(e^{\imath s},e^{\imath s (2n+1)})\big| \\
  & \stackrel{\eqref{eq:Gsolves}}{<} & 
  \big|\tilde\eta^2(e^{\imath s})e^{\imath s(2n+1)}\vartheta - 
  \mu(e^{\imath s})e^{\imath s(2n+1)}\vartheta \big| + \frac{\epsilon}{2} <\epsilon;
\end{eqnarray*}
for the last inequality use \eqref{eq:r'} and recall that $|\vartheta|=1$. This ensures property {\it i)}.
Likewise, for $\zeta=e^{\imath s}\in b{\d}$ and $\rho\in[r',1)$, we infer that
\begin{eqnarray*}
	\dist(G(\rho\zeta),\varkappa(\zeta,\overline{\d})) & \leq & 
	\big| G(\rho e^{\imath s})-\varkappa(e^{\imath s},\rho^{2n+1} e^{\imath s(2n+1)}) \big| \\
 & \stackrel{\eqref{eq:Gsolves}}{<} & \frac{\epsilon}2+|F(\rho e^{\imath s})-F(e^{\imath s})|\\
 & & + \big| \tilde\eta^2(\rho e^{\imath s})\rho^{2n+1}e^{\imath s(2n+1)}
 		\vartheta - \mu(e^{\imath s})\rho^{2n+1}e^{\imath s (2n+1)}\vartheta) \big|.
\end{eqnarray*}
Since $\rho\in[r',1)$ and $|\vartheta|=1$, the above is not greater than
\[
	\big| F(\rho e^{\imath s})-F(e^{\imath s})\big| +
	\big|\tilde\eta^2(\rho e^{\imath s})-\mu(e^{\imath s})\big|
	+ \frac{\epsilon}{2} \stackrel{\eqref{eq:r'}}{<}\epsilon,
\]
thereby proving property {\it ii)}. Finally, it is clear from \eqref{eq:F'} and \eqref{eq:Phi} that 
$\Phi_n \to F'$ uniformly on the set $\{\zeta\in\c\colon |\zeta|\leq r'\}$ as $n\to+\infty$; hence
property {\it iii)} is achieved provided that $n$ is chosen big enough. This completes the proof for a general size function $\mu$.

\medskip
\noindent{\em Proof of property {\it iv)}}. 
Assume now that $I$ is a compact arc in the circle $b{\d}$ and that the function $\mu\colon b\d\to\r_+$ vanishes on $b{\d}\setminus I$. Pick an open neighborhood $U$ of $I$ in $\overline{\d}$. Let $\Dcal\Subset\c$ be a smoothly bounded convex domain such that $\overline{\d}\subset\overline{\Dcal}$, $\overline{\d}\setminus U \subset \Dcal$, and  $I':=b{\Dcal}\cap b{\d}$ is a compact arc containing $I$ in its interior (see Fig.\ \ref{fig:RHiv}). Such $\Dcal$ is obtained by pushing the boundary curve of $\d$ slightly out of $\overline \d$, except on a small neighborhood of the arc $I$ where it remains fixed.
\begin{figure}[ht]
    \begin{center}
    \resizebox{0.38\textwidth}{!}{\includegraphics{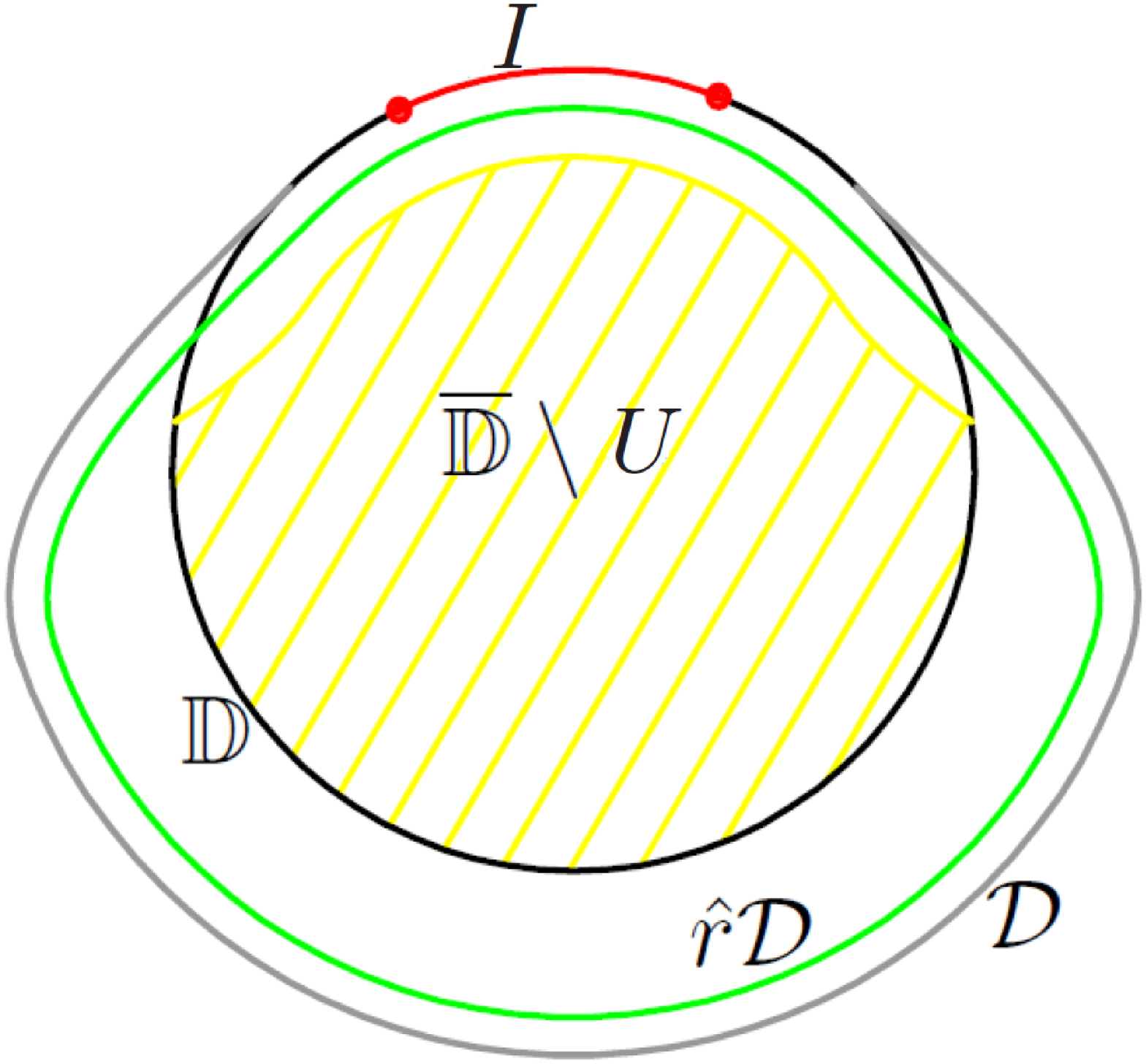}}
        \end{center}
        \vspace{-0.25cm}
\caption{$\Dcal$.}\label{fig:RHiv}
\end{figure}  
By the Mergelyan theorem for null curves (cf.\ \cite{AL1}, \cite[Lemma 1]{AL2}, and \cite{AF2}) we can approximate the null disc $F\colon \overline \d\to\c^3$ by a null holomorphic immersion $\wh F\colon \overline \Dcal\to\c^3$. The proof is completely elementary over a simply connected domain: we write the derivative of $F\colon \overline \d \to\c^3$ in the form $F'=\pi(u,v)$ \eqref{eq:Phi}, with $(u,v)\colon \overline \d\to\c^2\setminus \{(0,0)\}$. By the usual Mergelyan theorem and the general position argument we can approximate $(u,v)$ in the $\Cscr(\overline \d)$ topology by a map $(\hat u,\hat v)\colon  \overline \Dcal \to\c^2\setminus\{(0,0)\}$ of class $\Ascr(\Dcal)$ and take $\pi(\hat u,\hat v)$ as the derivative of the new null immersion $\wh F\colon \overline \Dcal\to\c^3$. Consider the continuous function $\wh\mu\colon b {\Dcal}\to \r_+$ given by 
\[
		\wh\mu|_{b {\Dcal}\cap b{\d}}=\mu|_{b {\Dcal}\cap b{\d}} \quad\text{and}\quad \wh\mu|_{b {\Dcal}\setminus b{\d}}=0.
\]
Define 
\[
\wh \varkappa\colon b {\Dcal}\times\overline{\d}\to\c^3,\quad \wh \varkappa(\zeta,\xi)=F(\zeta)+\wh\mu(\zeta)\xi\vartheta.
\]
Pick a number $\hat r\in[r,1)$ close enough to $1$ such that 
\[
		\overline{\d}\setminus U \subset \hat r \Dcal:=\{\hat r \zeta\colon \zeta\in\Dcal\}\Subset\Dcal
\]
and
\[
		|\wh F(\rho \zeta)- \wh F(\zeta)|<\epsilon/2\quad 
		\text{for all $\zeta\in \overline{\Dcal}$ and all $\rho\in [\hat r,1)$.}
\]

Since $\Dcal$ is conformally equivalent to the disc $\d$ by a diffeomorphism extending smoothly to the closures, the already proved case of the lemma can be applied to $\wh F$ on $\overline \Dcal$ to find a number $r'\in[\hat r,1)$ and a null holomorphic immersion $\wh G\in\IA(\Dcal)$ satisfying the following properties (which correspond to {\it i')}--{\it iii')} in Remark \ref{rem:reparat}):
\begin{enumerate}[\rm (a)]
\item $\dist(\wh G(\zeta), \wh \varkappa(\zeta,b{\d}))<\epsilon/2$ for all $\zeta\in b {\Dcal}$.
\item $\dist(\wh G(\rho\zeta), \wh \varkappa(\zeta,\overline{\d}))<\epsilon/2$ for all $\zeta\in b {\Dcal}$ and all $\rho\in[r',1)$.
\item $\wh G$ is $\frac{\epsilon}2$-close to $F$ in the $\Cscr^1$ topology on $r'\overline{\Dcal}\subset\Dcal$.
\end{enumerate}

It is now straightforward to verify that the null immersion $G:=\wh G|_{\overline{\d}}\in\IA(\d)$ meets the requirements {\it i)}--{\it iv)}. In particular, since the compact set $\overline{\d}\setminus U$ is contained in $\Dcal$, and hence in $r'\Dcal$ for some $r'<1$, property {\it iv)} of $G$ follows directly from property {\rm (c)} of $\wh G$. The same observation gives {\it iii)}, and we leave the trivial verification of {\it i)}--{\it ii)} to the reader.
\end{proof}


\subsection{Riemann-Hilbert problem for families of null discs}
\label{ss:families}

An inspection of the proof of Lemma \ref{lem:RH} gives the analogous conclusion for a family $F_t\colon \overline\d \to \c^3$ of null discs of class $\Ascr^1(\d)$ depending holomorphically on a parameter $t\in B\subset \c^N$ in a ball $B$ centered at the origin in a complex Euclidean space, provided that the $\pi$-lifting $(u_t,v_t)\colon\overline\d\to \c^2$ of the derivatives $F'_t=\pi(u_t,v_t) \colon \overline\d\to \Agot\setminus \{0\}$ (cf.\ \eqref{eq:F'}) satisfy the following condition:
\begin{equation}
\label{eq:generalposition}
	(u_t(\xi),v_t(\xi))+s(p,q)\ne (0,0) \quad \forall \xi\in b\d,\ s\in \c,\ t\in B. 
\end{equation}

\begin{lemma}
\label{lem:RHfamily}
Assume that $\{F_t\}_{t\in B}$ is as above. For every sufficiently big $n\in\n$ there exists a holomorphic family of null discs $G_{t,n}\colon \overline\d\to \c^3$ $(t\in B)$ of class $\Ascr^1(\d)$ and of the form
\begin{equation}\label{eq:Gt}
		G_{t,n}(\zeta) = F_t(\zeta)+{\bf B}_n(\zeta)\, \vartheta + 
		{\bf A}_{t,n}(\zeta),\quad \zeta\in \overline\d,\ t\in B,
\end{equation}
where ${\bf B}_n$ is independent of $t$ and $|{\bf A}_{t,n}|\to 0$ as $n\to +\infty$, uniformly on $B'\times \overline \d$ for any smaller ball $B'\Subset B$. In particular, if $n$ is big enough then  for every $t\in B'$ the map $G_{t,n}$ satisfies the conclusion of Lemma \ref{lem:RH} with respect to $F_t$ and a given $\epsilon>0$.

Furthermore, there exist a constant $\delta>0$ and an integer $n_0\in \n$ with the following property. Write $g_n(t,\zeta)=G'_{t,n}(\zeta)$ (the derivative with respect to $\zeta$). If $n\ge  n_0$ and $\beta \colon B'\times \overline \d \to B$ is a map of class $\Ascr(B'\times \d)$ and of the form $\beta(t,\zeta)=t+b(t,\zeta)$, with $\|b\|_{0,\overline{B'}\times \d} <\delta$, then the null disc
\begin{equation}\label{eq:wtGt}
	\wt G_{t,n}(\zeta) = F_t(0) + \int_0^\zeta g_n\bigl(t+b(t,\xi),\xi\bigr)\, 
							d\xi,\quad \zeta\in\overline\d,
\end{equation}
satisfies the conclusion of Lemma \ref{lem:RH} with respect to $F_t$ for every $t\in B'$.
\end{lemma}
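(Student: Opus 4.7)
The plan is to re-run the construction from Lemma \ref{lem:RH} with the parameter $t\in B$ carried along, and then to establish the stability claim via a Cauchy estimate in $t$ combined with a structural observation about the expansion \eqref{eq:G}.

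\textbf{Parametric construction.} First I would note that, because $\overline\d$ is simply connected and $F'_t$ is holomorphic in $(t,\zeta)\in B\times\d$ (with continuous extension to $B\times\overline\d$) with values in $\Agot\setminus\{0\}$, the $\pi$-lifting $(u_t,v_t)$ from \eqref{eq:F'} can be chosen holomorphic in $(t,\zeta)$. Then, using the same Laurent polynomial $\tilde\eta$ approximating $\sqrt\mu$ and the same vector $(p,q)$ supplied by hypothesis \eqref{eq:generalposition}, I would define $u_{t,n},v_{t,n}$ as in \eqref{eq:tildef}, set $\Phi_{t,n}=\pi(u_{t,n},v_{t,n})$, and
\[
G_{t,n}(\zeta)=F_t(0)+\int_0^\zeta \Phi_{t,n}(\xi)\,d\xi.
\]
Hypothesis \eqref{eq:generalposition} together with the compactness of $\overline{B'}\times\overline\d$ ensures, exactly as in the proof of Lemma \ref{lem:RH}, that $(u_{t,n},v_{t,n})$ avoids $(0,0)$ uniformly in $t\in\overline{B'}$ for all $n$ large enough, so $G_{t,n}$ is a genuine $t$-holomorphic family of null discs. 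The expansion \eqref{eq:G} carries over verbatim: the polynomial ${\bf B}_n(\zeta)\,\vartheta$ is built solely from the Laurent coefficients of $\tilde\eta^2$ and is therefore independent of $t$, while the remainder ${\bf A}_{t,n}$ still obeys the bound \eqref{eq:estimateA} with the constant $C_0$ of \eqref{eq:boundM} replaced by its supremum over $\overline{B'}\times\overline\d$. Hence $\|{\bf A}_{t,n}\|_{0,\overline\d}\to 0$ uniformly in $t\in\overline{B'}$, and the verification of properties i)--iii) of Lemma \ref{lem:RH} goes through unchanged for every $t\in B'$.

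\textbf{Stability under parameter perturbation.} For the second half of the statement I would pick an intermediate ball $B''$ with $\overline{B'}\subset B''\Subset B$ and exploit the fact that $D_tg_n(t,\zeta)=D_tF'_t(\zeta)+D_t{\bf A}'_{t,n}(\zeta)$, since ${\bf B}_n$ is $t$-independent. Each term in $D_t{\bf A}'_{t,n}$ carries a factor $\sqrt{2n+1}\,\xi^{n+j-m}$; on $\{|\zeta|\le r'\}$ this factor is $O(\sqrt{n}\,r'^n)\to 0$, so $|D_tg_n|$ is uniformly bounded in $n$ there, while along a radial path from $0$ to $\zeta\in\overline\d$ one has $\int_0^1\sqrt{2n+1}\,\rho^{n+j-m}\,d\rho=O(1/\sqrt n)$. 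Combined with a standard Cauchy estimate in $t$ on $B''$ applied to the $F'_t$ piece, these two observations produce a constant $C$ independent of $n$ (for $n$ large) such that for $b\in\Ascr(B'\times\d)$ with $\|b\|_{0,\overline{B'}\times\d}<\delta<\mathrm{dist}(\overline{B'},bB'')$ one has
\[
\|\wt G_{t,n}-G_{t,n}\|_{0,\overline{B'}\times\overline\d}+\|(\wt G_{t,n}-G_{t,n})'\|_{0,\overline{B'}\times\{|\zeta|\le r'\}}\le C\,\delta.
\]
Taking $\delta$ small enough that $g_n(t+b(t,\zeta),\zeta)$ stays in $\Agot\setminus\{0\}$ makes $\wt G_{t,n}$ a null immersion with $\wt G_{t,n}(0)=F_t(0)$. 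Choosing first $n_0$ so large that $G_{t,n}$ fulfils i)--iii) with tolerance $\epsilon/2$ for every $n\ge n_0$ and every $t\in B'$, and then shrinking $\delta$ so that $C\delta<\epsilon/2$, the triangle inequality gives i)--iii) with tolerance $\epsilon$ for $\wt G_{t,n}$, uniformly in $n\ge n_0$.

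\textbf{Main obstacle.} The nontrivial point will be verifying that the stability constant $C$ can be taken independent of $n$: the naive sup-norm of $D_tg_n$ grows like $\sqrt n$ on $b\d$, and it is only the structural fact that the fast-oscillating polynomial ${\bf B}_n\vartheta$ in \eqref{eq:G} is $t$-independent, together with the compensating factor $1/\sqrt n$ obtained by radial integration of $\sqrt{n}\,\rho^n$, that allows a single $\delta$ to work for every $n\ge n_0$. This separation of "fast, $t$-independent" from "slow, $t$-dependent" data in the expansion \eqref{eq:G} is exactly what makes the parametric Riemann--Hilbert construction robust under holomorphic reparametrisations of the parameter.
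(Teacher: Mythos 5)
Your proposal is correct and rests on the same crucial structural observation the paper uses: that the fast-oscillating polynomial $\mathbf{B}_n$ in the expansion \eqref{eq:G} is independent of $t$, so it drops out of the comparison between $G_{t,n}$ and $\wt G_{t,n}$. After that, however, your technical route is noticeably different. The paper compares the three terms of $g_n(t,\zeta)$ with $g_n(t+b(t,\zeta),\zeta)$ directly: the $f$-term is controlled by uniform continuity of $f$ on the compact set $\overline{B'}\times\overline\d$ (so a small $\delta$ forces a small difference), and the $\mathbf{A}_n$-term is disposed of simply by noting that both $\mathbf{A}_n(t,\cdot)$ and $\mathbf{A}_n(t+b(t,\cdot),\cdot)$ tend to zero as $n\to+\infty$ uniformly in the parameter (using the constant $C_0$ from \eqref{eq:boundMt}, valid on a slightly larger ball). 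Then sup-norm closeness plus interior Cauchy estimates in $\zeta$ give properties \emph{i)}--\emph{iii)}. You instead differentiate $g_n$ in $t$, bound $D_t f$ by a Cauchy estimate on an intermediate ball $B''$ with $\overline{B'}\subset B''\Subset B$, and observe that $D_t\mathbf{A}'_{t,n}$ carries the factor $\sqrt{2n+1}\,\xi^{n+j-m}$ whose radial integral is $O(1/\sqrt n)$; this yields the explicit bound $\|\wt G_{t,n}-G_{t,n}\|\le C\delta$ with $C$ uniform in $n$, which is a sharper and more quantitative version of what the paper obtains qualitatively. Both arguments are valid; yours makes the uniformity in $n$ of the choice of $\delta$ more explicit at the cost of an extra intermediate ball and a $t$-Cauchy estimate, while the paper's is shorter because it reuses the already-established estimate \eqref{eq:estimateA} without needing to differentiate in the parameter.
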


The case of Lemma \ref{lem:RHfamily} with a variable parameter depending on the base point will be used 
in the proof of Theorem \ref{th:RH} below.

\begin{proof} The first part is seen by a straightforward inspection of the proof of Lemma \ref{lem:RH}. In particular, the estimate  \eqref{eq:estimateA} holds uniformly in $t\in B'\Subset B$.

We now consider the second part.
Write $f(t,\cdotp):=F'_t=\pi(u_t,v_t)$ for $t\in B$; see \eqref{eq:F'}. Denote by ${\bf A}_{t,n}={\bf A}_n(t,\cdotp)$ and ${\bf B}_n$ the corresponding quantities \eqref{eq:G}. By choosing $n$ big enough, we may assume that $G_{t,n}$ satisfies Lemma \ref{lem:RH} with respect to $F_t$, with $\epsilon$ replaced by $\epsilon/2$. Note that the term ${\bf B}_n$ is independent of $t$, while $t$ enters in ${\bf A}_n(t,\cdotp)$ only through $(u_t,v_t)$. The equation \eqref{eq:G}, written for the $\zeta$-derivatives, is
\begin{equation}\label{eq:gt}
    g_n(t,\zeta) = f(t,\zeta) + {\bf B}'_n(\zeta)\, \vartheta + 
		{\bf A}'_n(t,\zeta), \quad \zeta\in \overline\d, \ t\in B.
\end{equation}
Replacing $t$ by $\beta(t,\zeta)=t+b(t,\zeta)$ gives 
\begin{equation}\label{eq:gtat}
	  g_n\bigl(t+b(t,\zeta),\zeta\bigr) = f\bigl(t+b(t,\zeta),\zeta\bigr) 
	  			+ {\bf B}'_n(\zeta)\, \vartheta + {\bf A}'_n\bigl(t+b(t,\zeta),\zeta\bigr)
\end{equation}
for all $\zeta\in \overline\d$ and $t\in B'$. The map $\wt G_{t,n}$ \eqref{eq:wtGt} is obtained by integrating this expression. 

To conclude the proof, it suffices to show that for any given constant $\sigma>0$ we can choose the number $\delta>0$ (which controls the size of the function $b$ in \eqref{eq:wtGt}) such that 
\begin{equation} \label{eq:Gtminus}
		\|G_{t,n}-\wt G_{t,n}\|_{0,\overline \d} <\sigma\quad  \forall t\in B'.
\end{equation}
Properties {\it i)} and {\it ii)} in Lemma \ref{lem:RH} then follow by choosing $\sigma=\epsilon/2$, and the property {\it iii)} follows from \eqref{eq:Gtminus} by Cauchy estimates provided that $\sigma>0$ is small enough. 

To prove the estimate \eqref{eq:Gtminus}, we compare the integrals of the terms on the right hand side of \eqref{eq:gtat} to the integrals of the corresponding terms in \eqref{eq:gt}. Since the second term does not contain the variable $t$, we need not consider it. For the first term we have by continuity
\[
		\big|f(t,\zeta) - f\bigl(t+b(t,\zeta),\zeta\bigr)\big| < \epsilon/4  
		\quad\forall t\in B',\ \zeta\in\overline\d,
\]
provided that $\delta>0$ is small enough, so the integrals of these two functions along a straight line segment from $0$ to any point $\zeta\in \overline \d$ differ by at most $\epsilon/4$. 

It remains to estimate the last term in \eqref{eq:gtat}. Recall that $\pi(p,q)=\vartheta$ is the direction null vector. By compactness there is a constant $C_0>0$ satisfying 
\begin{equation}\label{eq:boundMt}
		\|(pu_t-qv_t,pu_t+qv_t, qu_t+pv_t)\|_{0,\overline{\d}}< C_0\quad \forall t\in B'.
\end{equation}
Using this constant, the estimate \eqref{eq:estimateA} for ${\bf A}_n$ remains valid and shows that the integral of the last term in \eqref{eq:gtat} converges to zero uniformly on $B'\times \overline \d$ as $n\to +\infty$. 
\end{proof}


\subsection{Riemann-Hilbert problem for bordered Riemann surfaces as null curves}
\label{ss:general}
We are now ready to prove the following main technical result of this paper.

\begin{theorem}\label{th:RH}
Let $M$ be a bordered Riemann surface and set $\overline M=M\cup bM$. Let $I_1,\ldots,I_k$ be a finite collection of pairwise disjoint compact subarcs of $bM$ which are not connected components of $bM$. Choose a small annular neighborhood $A\subset \overline M$ of $bM$ (that is, $A$ consists of pairwise disjoint annuli, each of them containing a connected component of $bM$ in its boundary) and a smooth retraction $\rho\colon A\to bM$. Let $F\colon \overline M \to\c^3$ be a null holomorphic immersion of class $\IA(M)$ (the {\em central curve}). Let $\vartheta_1,\ldots,\vartheta_k \in \Agot\setminus\{0\}$ be null vectors (the {\em direction vectors}), let $\mu \colon bM \to \r_+$ be a continuous function supported on $\bigcup_{i=1}^k I_i$ (the {\em size function}), and consider the continuous map  
\[
	\varkappa\colon bM \times\overline{\d}\to\c^3,\qquad 
	\varkappa(x,\xi)=\left\{\begin{array}{ll}
	F(x); & x\in bM\setminus \bigcup_{i=1}^k I_i \\
	F(x) + \mu(x)\,\xi \,\vartheta_i; & x\in I_i,\; i\in\{1,\ldots,k\}.
	\end{array}\right.
\]
Then for any number $\epsilon>0$ there exist an arbitrarily small open neighborhood $\Omega$ of\ \ $\bigcup_{i=1}^k I_i$ in $A$ 
and a null holomorphic immersion $G\colon \overline M\to\c^3$ of class $\IA(M)$ satisfying the following properties:
\begin{enumerate}[\it i)]
\item $\dist(G(x),\varkappa(x,b\d))<\epsilon$ for all $x\in bM$.
\item $\dist(G(x),\varkappa(\rho(x),\overline{\d}))<\epsilon$ for all $x\in \Omega$.
\item $G$ is $\epsilon$-close to $F$ in the $\Cscr^1$ topology on $\overline M\setminus \Omega$.
\end{enumerate}
\end{theorem}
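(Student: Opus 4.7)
The plan is to reduce Theorem \ref{th:RH} to the disc case (Lemma \ref{lem:RH}) by performing the Riemann-Hilbert modification locally on a small neighborhood $D_i$ of each arc $I_i$ and then assembling the local modifications with the unchanged $F$ via the technique of gluing holomorphic sprays, working throughout at the level of the Weierstrass $1$-form $dF$ in order to keep everything inside the null quadric $\Agot\setminus\{0\}$.

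First I would choose pairwise disjoint, smoothly bounded, simply connected open sets $D_1,\ldots,D_k$ in $\overline M$ with $I_i \Subset D_i \cap bM \Subset bM$ and $\overline{D_i} \subset A$. Each $\overline{D_i}$ is conformally a closed disc, so Lemma \ref{lem:RH} --- in the form of Remark \ref{rem:reparat} --- produces a null immersion $G_i \in \IA(D_i)$ solving the Riemann-Hilbert problem with central curve $F|_{\overline{D_i}}$, direction $\vartheta_i$ and size function $\mu|_{bM \cap \overline{D_i}}$ (supported on $I_i$), which, by property iv), is $\epsilon$-close to $F$ in $\Cscr^1$ on $\overline{D_i}$ off an arbitrarily small neighborhood $U_i$ of $I_i$.

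Second, I would glue the $\Agot\setminus\{0\}$-valued holomorphic $1$-forms $\Phi_i := dG_i$ on $\overline{D_i}$ to $\Phi_0 := dF$ on a slightly shrunk bordered domain $M_0 \in \Bscr(M)$ chosen so that $\overline{M_0} \cup \bigcup_i \overline{D_i} = \overline M$ and a fixed homology basis $\gamma_1,\ldots,\gamma_l$ of $H_1(M;\z)$ lies inside $M_0$. Cauchy estimates applied to property iii) of Lemma \ref{lem:RH} yield $\Phi_i \approx \Phi_0$ on each overlap $M_0 \cap D_i$. Following \cite{DF2007, F2007}, I would embed $\Phi_0$ into a dominating holomorphic spray $\Phi_0(\cdot, t)$ of $\Agot\setminus\{0\}$-valued maps with parameter $t \in B \subset \c^N$, $\Phi_0(\cdot, 0) = \Phi_0$, and, using the parametric Lemma \ref{lem:RHfamily} on each $D_i$, a matching spray $\Phi_i(\cdot, t)$ with $\Phi_i(\cdot, 0) = \Phi_i$. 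A bounded linear $\dibar$-solution operator on the Cartan pair $(M_0, D_i)$ together with the implicit function theorem in the Banach space $\Ascr(M_0 \cap D_i, B)$ then yields biholomorphic transitions $t = \beta_i(x, s)$ aligning the local sprays and producing a single $\Agot\setminus\{0\}$-valued holomorphic $1$-form spray $\wt\Phi(\cdot, s)$ on $\overline M$; the evaluation $\wt\Phi(\cdot, 0)$ is $\Cscr^0$-close to $\Phi_0$ on $\overline{M_0}$ and to each $\Phi_i$ on $\overline{D_i}\setminus U_i$.

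Third, since the glued form generally has nontrivial periods over $\gamma_1,\ldots,\gamma_l$, its primitive does not descend to a single-valued map on the multiply connected $M$. Choosing $N$ large and perturbing the dominating spray on $M_0$, I would arrange that the derivative at $s = 0$ of the period map
\[
	s \longmapsto \Bigl(\textstyle\int_{\gamma_j} \wt\Phi(\cdot, s)\Bigr)_{j=1}^l \in (\c^3)^l
\]
is surjective; since $\Phi_0 = dF$ has vanishing periods, the implicit function theorem provides $s_*\in B$ close to $0$ with $\int_{\gamma_j} \wt\Phi(\cdot, s_*) = 0$ for all $j$. The primitive
\[
	G(x) = F(p_0) + \int_{p_0}^x \wt\Phi(\cdot, s_*), \qquad p_0 \in M_0,
\]
is then the desired null immersion in $\IA(M)$, and conclusions i)--iii) follow from the local Riemann-Hilbert estimates of Lemma \ref{lem:RH} combined with the $\Cscr^1$-closeness guaranteed by the spray gluing and by the second part of Lemma \ref{lem:RHfamily}, taking $\Omega$ to be a neighborhood of $\bigcup_i I_i$ containing each $U_i$.

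The main obstacle lies in juggling three interacting constraints: (a) the pointwise nullity of the glued $1$-form, which forces the gluing to take place nonlinearly on the conical manifold $\Agot\setminus\{0\}$ via dominating sprays rather than as a naive $\dibar$-patched sum; (b) the vanishing of the $l$ $\c^3$-valued periods, achievable only by adjusting the finite-dimensional spray parameter after the gluing; and (c) the preservation of the sharp Riemann-Hilbert conclusions i)--iii) on each $D_i$ after this parameter adjustment --- precisely the situation prepared by Lemma \ref{lem:RHfamily}, whose estimate on $\wt G_{t,n}$ for reparametrizations $\beta(t,\zeta) = t + b(t,\zeta)$ is what permits the final period correction to be made without destroying the boundary behaviour built into the local modifications on the $D_i$.
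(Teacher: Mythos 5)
Your proposal is correct and follows essentially the same strategy as the paper's proof: localize the Riemann--Hilbert modification to simply connected domains near the arcs $I_i$ via Lemma~\ref{lem:RH}, glue the resulting $\Agot\setminus\{0\}$-valued derivative sprays to $dF$ by the Cartan-pair technique of \cite{DF2007}, and then adjust the finite-dimensional spray parameter to annihilate the periods before integrating back to a null curve, with the second part of Lemma~\ref{lem:RHfamily} ensuring the boundary estimates survive the gluing. The paper reduces to $k=1$ and constructs the dominating period-surjective spray $\{f_t\}$ on $\overline M$ \emph{first} (using the regularity result of \cite{AF2}) and then applies the parametric Lemma~\ref{lem:RHfamily} over the disc, rather than first producing the non-parametric $G_i$ and retrofitting a spray around it, but this is only a cosmetic reordering; note also that your $M_0$ cannot literally lie in $\Bscr(M)$ if $\overline{M_0}\cup\bigcup_i\overline{D_i}$ is to equal $\overline M$ --- it should be, as in the paper, the domain obtained by denting $bM$ inward only near the $I_i$.
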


At this time we are unable to prove Theorem \ref{th:RH} if the support of the size function $\mu$ contains
a complete boundary curve of $M$, except if $M$ is the disc when it coincides with Lemma \ref{lem:RH}.
However, the stated result suffices for all our applications in this paper. For results without 
the nullity condition see \cite{Cerne2004,FG2001} and the references therein.

\begin{proof}
For the sake of simplicity we assume that $k=1$; the general case trivially follows from a finite recursive application of this particular one. Write $I=I_1$ and $\vartheta=\vartheta_1$. We denote by $C$ the connected component of $bM$ containing the arc $I$.  

Choose  oriented  closed curves $\Gamma_1,\ldots, \Gamma_l\subset M$ whose homology classes are a basis of the 1st homology group $H_1(M;\z)$ (see \S \ref{sec:prelim}). Let $\gamma_j\colon [0,1]\to \Gamma_j$ be a parametrization of $\Gamma_j$ and set $\Gamma=\bigcup_{j=1}^l \Gamma_j$. Pick a nowhere vanishing holomorphic 1-form $\theta$ on $M$ (such exists by the Oka-Grauert principle; cf.\ Theorem 5.3.1 in 
\cite[p.\ 190]{F2011}). Then $dF=f\theta$ where $f=(f_1,f_2,f_3)\colon \overline M\to \Agot \setminus\{0\}$ is a map of class $\Ascr(M)$ with values in the null quadric. 

We denote by 
\begin{equation}
\label{eq:P}
	\Pcal=(\Pcal_1,\ldots, \Pcal_l)\colon \Ascr(M,\c^3)\to (\c^3)^l
\end{equation}
the {\em period map} whose $j$-th component, applied to $f\in \Ascr(M,\c^3)$, equals 
\[
	\Pcal_j(f) = \int_{\gamma_j} f\theta = \int_0^1 f(\gamma_j(s))\, \theta(\gamma_j(s),\dot{\gamma_j}(s))\, ds
	\in \c^3.
\]

By \cite[Theorem 2.3]{AF2} we can approximate the null immersion $F$ arbitrarily  close in the $\Ascr^1(M,\c^3)$ topology by a {\em regular null immersion}, i.e., by one for which the tangent planes to $\Agot\setminus\{0\}$ at all points $f(x)$ for  $x \in M$ together span $\c^3$ (cf.\ Definition 2.2 in \cite{AF2}; here $f=dF/\theta$). Assume from now on that $F$ is regular. By \cite[Lemma 5.3]{AF2} there exist an open ball $B$ centered at the origin in some Euclidean space $\c^N$ and a holomorphic map 
\[
	B \times \overline M \ni (t,x) \longmapsto f_t(x) \in \Agot \setminus\{0\}, 
\]
with $f_0=f$, such that the period map $B\ni t \mapsto \Pcal(f_t(\cdot)) \in (\c^3)^l$ (\ref{eq:P}) has maximal rank $3l$ at $t=0$. By increasing $N$ we can also ensure that for every point $x\in \overline M$ the map
\[
	\frac{\partial}{\partial t}\bigg|_{t=0} f_t(x) \colon T_0\c^N= \c^N\longrightarrow T_{f(x)} (\Agot\setminus\{0\})
\]
is surjective. This means that the family of maps $f_t \colon \overline M\to \Agot\setminus\{0\}$, $t\in B$, is a (local) {\em dominating holomorphic spray of class $\Ascr(M)$} with values in $\Agot\setminus\{0\}$ in the sense of \cite[Definition 4.1]{DF2007}. (See also \cite[\S 5.9]{F2011}.) The domination condition can be obtained by applying to an existing (possibly non-dominating) spray $f_t(x)$ the local flows (with independent complex time variables) of holomorphic vector fields which generate the tangent space of the null quadric at every point of $\Agot\setminus\{0\}$. For instance, we can use the linear vector fields 
\[
		V_0=z_1\frac{\partial}{\partial z_1} + z_2\frac{\partial}{\partial z_2} 
		+ z_3\frac{\partial}{\partial z_3} \quad\text{and}\quad
		V_{i,j}=z_i\frac{\partial}{\partial z_j} - z_j\frac{\partial}{\partial z_i},\quad 1\le i\ne j\le 3.
\]

Choose a smoothly bounded, connected and simply connected domain $D\subset M$ such that $\overline D\cap \Gamma =\emptyset$, $I':=bD\cap bM$ is a proper subarc of the curve $C$, and the arc $I$ is contained in the relative interior of $I'$. Since the function $\mu\colon C\to \r_+$ is supported in $I$, we can extend it to a continuous function $\mu\colon bD\to \r_+$ that vanishes on $bD\setminus I$. 

The maps $f_t \colon \overline M\to \Agot\setminus\{0\}$ do not necessarily integrate to null curves since their periods over the curves $\Gamma_j$ need not vanish for $t\ne 0$. However, on $\overline D$ there is no period problem, so $f_t\,\theta$ restricted to $\overline D$ is the differential of a null immersion 
\begin{equation}
\label{eq:Ft}
		F_t(x)=F(x_0)+\int_{x_0}^x  f_t\,\theta,\qquad x\in \overline D,\ t\in B.
\end{equation}
Here $x_0\in D$ is an arbitrary initial point and the integral is independent of the choice of a path in $\overline D$ from $x_0$ to $x$. Note that for $t=0$ the map $F_0$ coincides with the restriction to $\overline D$ of our given null immersion $F\colon \overline M\to \c^3$.

Choose a conformal diffeomorphism $h\colon\overline D\to\overline\d$ with $h(x_0)=0$. Denote by $\zeta$ the variable on $\d$. Since $\theta$ and $dh$ are nonvanishing holomorphic 1-forms on $\overline D$, their quotient $\alpha=\theta/dh \colon \overline D\to\c\setminus\{0\}$ is a nowhere vanishing function in $\Ascr(D)$. Let $\phi_t\colon \overline \d\to \Agot\setminus\{0\}$ be the unique map of class $\Ascr(\d)$ determined by the condition 
\begin{equation}
\label{eq:phit}
	(\phi_t \circ h)\,\alpha = f_t \quad\text{on}\ \overline D,\quad \forall t\in B. 
\end{equation}
The chain rule then gives $h^*\left( \phi_t(\zeta)\, d\zeta\right) =f_t\,\theta= dF_t$ on $\overline D$ for all $t\in B$. 

We now follow the construction in the proof of Lemmas \ref{lem:RH} and \ref{lem:RHfamily}. Since $B\times \overline \d$ is simply connected, we can find a $\pi$-lifting of $\{\phi_t\}_{t\in B}$ to $\c^2\setminus \{(0,0)\}$, i.e., a holomorphic family of maps $(u_t,v_t) \colon \overline\d \to  \c^2\setminus \{(0,0)\}$ such that $\phi_t=\pi(u_t,v_t)$ for every $t\in B$; cf.\ \eqref{eq:F'}. Likewise we choose a vector $(p,q)\in \c^2\setminus\{(0,0)\}$ such that $\pi(p,q)=\vartheta$ is the given null direction vector for the arc $I$. We may consider the size function $\mu \colon C\to \r_+$ as a function on $b\d$ which vanishes outside the arc $I$. Set $\eta=\sqrt{\mu}\colon b{\d}\to \r_+$ and let $\tilde\eta$ denote its rational approximation of the form \eqref{eq:eta} with a pole of order $m-1$ at the origin. For every $n \ge m$ we define functions $u_{t,n}, v_{t,n} \colon \overline\d\to\c$ of class $\Ascr(\d)$ by \eqref{eq:tildef}:
\begin{equation}
	u_{t,n}(\xi) =  u_t(\xi)+\sqrt{2n+1} \,\tilde\eta(\xi) \, \xi^n p, \quad 
	v_{t,n}(\xi) =  v_t(\xi)+\sqrt{2n+1} \,\tilde\eta(\xi) \, \xi^n q.
\end{equation}
By the same argument as in the proof of Lemma \ref{lem:RH} we can change the vector $(p,q)\in\c^2$ slightly to ensure that for all large enough $n$ and for all $t$ sufficiently close to $0\in\c^N$ the image of the map $(u_{t,n},v_{t,n})\colon\overline\d\to \c^2$ avoids the origin $0\in\c^2$. By shrinking the ball $B$ around $0\in\c^N$ we may assume that this holds for all $t\in B$. For such choices of $n$ and $B$ the map $g_{t,n}:=\pi(u_{t,n},v_{t,n})\colon \overline\d \to\Agot$ has range in $\Agot\setminus\{0\}$, and hence it furnishes a null holomorphic immersion $G_{t,n} \in\IA(\d)$ by the expression
\begin{equation}
\label{eq:Gtn}
	G_{t,n}(\zeta)= F(x_0) + \int_0^\zeta g_{t,n}(\xi)\, d\xi,  \quad \zeta\in \overline\d,\ t\in B.
\end{equation}
(Compare with \eqref{eq:Gt}.) 
The dependence is holomorphic in $t\in B$. The estimates in the proof of Lemma \ref{lem:RH} show that, after a slight shrinking of the ball $B$, the sequence of null immersions 
\[
	\wt G_{t,n}:=G_{t,n}\circ h \colon \overline D\to \c^3,\quad  t\in B,\ n\in \n,
\]
satisfies Theorem \ref{th:RH} with respect to the null curve $F_t\colon \overline D\to\c^3$ and the size function $\mu$. In particular, as $n\to +\infty$, we have $\wt G_{t,n} \to F_t$ in the $\Cscr^1$ topology on $\overline D\setminus I$, uniformly on compacts and uniformly with respect to $t\in B$. 

Note that $d \wt G_{t,n} = \tilde g_{t,n} \,\theta$ where $\tilde g_{t,n}=(g_{t,n}\circ h)\alpha \colon \overline D\to \Agot\setminus \{0\}$.  We also have $f_t=(\phi_t \circ h)\,\alpha $ on $\overline D$ by \eqref{eq:phit}. Since $g_{t,n}\to \phi_t$ on $\overline D\setminus I$ as $n\to +\infty$, it follows that for any neighborhood $W\subset \overline D$ of the arc $I$ and for sufficiently big $n$ the map $\tilde g_{t,n}$ is arbitrarily $\Cscr^0$-close to $f_t$ on $\overline D\setminus W$, uniformly with respect to the parameter $t\in B$. 

To complete the proof, we will show that for all sufficiently big $n$ the two sprays of maps $\{f_t\}_{t\in B}$ and $\{\tilde g_{t,n}\}_{t\in B}$, with values in $\Agot\setminus \{0\}$, can be glued into a single spray $\tilde f_{t,n} \colon \overline M\to \Agot\setminus\{0\}$ of maps of class $\Ascr(M)$ such that $\tilde f_{t,n}$ is uniformly close to $f_t$ on $\overline {M\setminus D}$, and is uniformly close to $\wt g_{t,n}$ on $\overline D$. Assuming for a moment that this gluing can be done, let us indicate how it will be used to complete the proof of the theorem. Since the period map $B\ni t\mapsto \Pcal(f_t) \in\c^{3l}$ \eqref{eq:P} has maximal rank $3l$ at $t=0$ and all curves $\Gamma_j$ are contained in $\overline {M\setminus D}$ where $\tilde f_{t,n}$ is close to $f_t$ for big $n$, there exists a $t_0=t_0(n)\in B$ very close to $0$ (depending on how big $n$ we choose) such that the map $\tilde f_{t_0,n}\colon \overline M \to \Agot\setminus \{0\}$ has vanishing periods, so it integrates to a null curve $G:\overline M\to \c^3$. We shall verify that $G$ satisfies the special case of Theorem  \ref{th:RH} provided that all approximations were close enough and $n$ is chosen sufficiently big. 

We glue the sprays by the method in \cite[\S 4]{DF2007} (see also \cite[\S 5.9]{F2011}). From now on the parameter ball $B\subset \c^N$ and the size function $\tilde \eta$ will be kept fixed; the necessary approximations to enable gluing and to control the subsequent estimates will be achieved by choosing the integer $n$ big enough. We begin by explaining the underlying geometry. 

Recall that the arc $I \subset C$ is contained in the relative interior of $I'=bD\cap bM=bM\cap C$ and the function $\mu\colon bD\to \r_+$ vanishes on $bD\setminus I$. Also, $A\subset \overline M$ is an annular neighborhood of the boundary curve $C\subset bM$. Choose a smoothly bounded, simply connected compact neighborhood $V\subset \overline D \cap A$ of $I$ in $\overline D$ such that $bV\cap bM$ is contained in the relative interior of $I'$. Note that the arc $I$ is contained in the relative interior of $bV\cap bM$. 

By denting the boundary curve $C\subset bM$ slighty inwards along a relative neighborhood of the arc $I$ we obtain a compact, connected, smoothly bounded domain $U\subset \overline M$ such that $U\cup V=\overline M$, the intersection $U\cap V$ has smooth boundary, and $\overline{U\setminus  V}\cap \overline{V\setminus  U} =\emptyset$. A pair $(U,V)$ with these properties  is called a {\em Cartan decomposition} of the bordered Riemann surface $\overline M$ (cf.\ \cite[\S 5.7]{F2011}). We can also ensure that the point $x_0$ lies in $U$. See Fig.\ \ref{fig:UV}.
\begin{figure}[ht]
    \begin{center}
    \resizebox{0.8\textwidth}{!}{\includegraphics{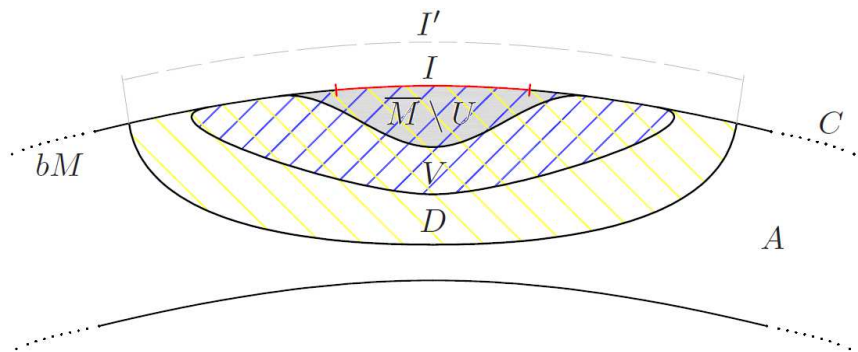}}
    \end{center}
        \vspace{-0.25cm}
\caption{The sets $U$ and $V$.}\label{fig:UV}
\end{figure} 

We now consider the following two sprays of maps with values in $\Agot\setminus\{0\}$. The first spray over $U$ is $\{f_t|_U \colon t\in B\}$; this spray is of class $\Ascr(U)$ and is dominating by the construction. The second spray over $V$ is $\{\tilde g_{t,n} \colon t\in B\}$; it is of class $\Ascr(V)$. Recall that the second spray converges to the first one uniformly on $B\times  (U\cap V)$ as $n\to +\infty$. Choose a slightly smaller ball $0\in B'\Subset B$ and a number $\delta>0$. By \cite[Proposition 4.3]{DF2007} (or \cite[Proposition 5.9.2, p.\ 216]{F2011}) there exists for every sufficiently big $n\in \n$ a pair of holomorphic maps $\alpha_n\colon B'\times U\to B$, $\beta_n \colon B'\times V\to B$, of the form
\[
		\alpha_n(t,x)=t+a_n(t,x), \quad \beta_n(t,x)=t+b_n(t,x), 
\]
satisfying the following sup-norm estimates
\begin{equation}
\label{eq:est-ab}
		\|a_n\|_{0,B'\times U} <\delta, \quad \|b_n\|_{0,B'\times V} <\delta,
\end{equation}
and the compatibility condition  
\begin{equation}
\label{eq:compat}
		f_{\alpha_n(t,x)}(x) = \wt g_{\beta_n(t,x),n}(x),  \quad x\in U\cap V,\ t\in B'.
\end{equation}
Hence the two sides above amalgamate into a spray of maps $\tilde f_{t,n} \colon U\cup V=\overline M\to \Agot\setminus\{0\}$ of class $\Ascr(M,\c^3)$ depending holomorphically on the parameter $t\in B'$. 

By the argument made above, we can find for every sufficiently big $n\in\n$ a point $t_0=t_0(n) \in B'$ arbitrarily near $0$ such that the map $\tilde f_{t_0,n}$ has vanishing periods. Let
\[
		F_n(x)= F(x_0) + \int_{x_0}^x \tilde f_{t_0,n}\,\theta, \qquad x \in \overline M.
\]
Then the map $G:=F_n$ satisfies the conclusion of Theorem \ref{th:RH} if $\delta>0$ is small enough and $n\in \n$ is big enough. Indeed, on the set $U$, $F_n$ is as close as desired to $F$ in the $\Cscr^1(U)$ topology for big $n$ as follows from \eqref{eq:est-ab} (applied with a sufficiently small $\delta>0$) and \eqref{eq:compat}. Over $V$ we can pass to the coordinate $\zeta=h(x)\in\overline\d$ $(x\in V)$, and the desired estimates follow immediately from Lemma \ref{lem:RHfamily} (the case of variable parameter).
\end{proof}


\section{On the conformal Calabi-Yau problem}
\label{sec:th}

In this section we prove Theorem \ref{th:intro} stated in the introduction; this will also prove Corollary \ref{co:r3}. Theorem \ref{th:intro} is a particular case of the following more precise result.

\begin{theorem}\label{th:main}
Let $M$ be a bordered Riemann surface. Every null holomorphic immersion $F\in\IA(M)$ can be approximated, uniformly on compacta in $M$, by complete proper 
null holomorphic embeddings into any open ball in $\c^3$ containing the image $F(\overline M)$.
\end{theorem}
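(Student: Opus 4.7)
The plan is to build, by a Nadirashvili-type recursion carried out \emph{with the conformal structure of $M$ fixed}, a sequence of null immersions $\{F_n\}_{n\ge 0}\subset\IA(M)$ converging uniformly on compacta to the desired map. Fix an open ball $B\subset\c^3$ containing $F(\overline M)$, a base point $p_0\in M$, and a normal exhaustion $M_0\Subset M_1\Subset\cdots$ of $M$ by subdomains in $\Bscr(M)$. Starting with $F_0=F$, I will produce at step $n$ an element $F_{n+1}\in\IA(M)$ and an index $k_{n+1}>k_n$ so that, for a preassigned summable sequence $\epsilon_n\searrow 0$:
\begin{enumerate}[\it (a)]
\item $\|F_{n+1}-F_n\|_{1,\overline{M_{k_n}}}<\epsilon_n$;
\item $F_{n+1}(\overline M)\subset B$;
\item $\dist_{(M,F_{n+1})}(p_0,bM_{k_{n+1}})>n+1$;
\item $\dist\bigl(F_{n+1}(x),bB\bigr)<1/(n+1)$ for every $x\in \overline{M\setminus M_{k_{n+1}}}$.
\end{enumerate}
Cauchy estimates and (a) make $G:=\lim_n F_n$ holomorphic on $M$; (c) together with (a) forces the induced metric to be complete, and (b) together with (d) forces $G$ to be a proper map into $B$. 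A final small general-position perturbation supplied by \cite[Theorem 2.4]{AF2} turns $G$ into an embedding.

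The core of the argument is the passage from $F_n$ to $F_{n+1}$, where Theorem \ref{th:RH} enters. I cover the boundary $b\Rcal$ of a slightly larger domain $\Rcal\in\Bscr(M)$ with $M_{k_n}\Subset\Rcal$ by finitely many short pairwise disjoint compact subarcs $I_1,\ldots,I_s$. To each $I_j$ I attach a null vector $\vartheta_j\in\Agot\setminus\{0\}$ and a continuous size function $\mu_j\ge 0$ supported in $I_j$; the $\vartheta_j$ are selected so that, for $\xi\in\overline\d$, the displaced point $F_n(x)+\mu_j(x)\xi\vartheta_j$ remains in $B$ but lies within distance $1/(n+1)$ of $bB$, and so that the disc swept by $\xi\mapsto F_n(x)+\mu_j(x)\xi\vartheta_j$ has diameter large enough that any curve in $M$ joining $p_0$ to $bM_{k_{n+1}}$ must cross several such discs transversely. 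Theorem \ref{th:RH} then yields $F_{n+1}\in\IA(M)$ that approximates $F_n$ in $\Cscr^1$ on $\overline{M_{k_n}}$ and away from a thin neighborhood of $\bigcup_j I_j$, while on that neighborhood its boundary values lie within $\epsilon$ of the above discs. Condition (a) is built in; (b) follows from the containment in $B$ just arranged; (c) follows from a Nadirashvili-style counting argument based on the many large intrinsic arcs; and (d) follows from the proximity of the pushed boundary values to $bB$.

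The main obstacle is the familiar tension between completeness (c) and ambient boundedness (b): stretching the intrinsic metric near $bM$ normally inflates the Euclidean diameter of the image. The remedy, which is precisely what Theorem \ref{th:RH} delivers for null curves, is that pushing each boundary arc $I_j$ along a null vector $\vartheta_j$ contributes only a small \emph{Euclidean} displacement (controlled by $\mu_j$) while adding a large \emph{intrinsic} length, provided $\vartheta_j$ is transverse to $dF_n$ along $I_j$. That such $\vartheta_j$ exist in abundance follows from the fact that $\Agot$ is a two-dimensional complex cone spanning $\c^3$. The delicate technical point is to choose the arcs $I_j$, the vectors $\vartheta_j$, and the sizes $\mu_j$ coherently so that conditions (a)--(d) hold simultaneously, and to verify that the intrinsic distance estimate (c) survives the small $\Cscr^1$ perturbation afforded by Theorem \ref{th:RH}. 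This is where the scheme of \cite{AF1} is adapted to the null setting, with Theorem \ref{th:RH} taking the role of the Riemann-Hilbert tool used there for Stein manifolds.
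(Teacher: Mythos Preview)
Your recursive framework and the list of target conditions (a)--(d) are the right shape, and indeed mirror the paper's scheme. But the heart of the step $F_n\to F_{n+1}$ contains a genuine gap: the claim that ``pushing each boundary arc $I_j$ along a null vector $\vartheta_j$ contributes only a small \emph{Euclidean} displacement \ldots\ while adding a large \emph{intrinsic} length'' is false. The nullity condition $\vartheta_j\in\Agot$ refers to the complex bilinear form $z_1^2+z_2^2+z_3^2$, not to the Hermitian norm; the Riemann-Hilbert deformation of Theorem~\ref{th:RH} displaces $F_n(x)$ to a point on the circle $F_n(x)+\mu_j(x)\,b\d\,\vartheta_j$, whose Euclidean radius is exactly $\mu_j(x)|\vartheta_j|$, and the intrinsic length gained is of the same order. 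So the RH step alone cannot resolve the tension between (b) and (c). Relatedly, Theorem~\ref{th:RH} requires the arcs $I_j$ to be pairwise disjoint and not full boundary components, hence they cannot cover $bM$; the remaining boundary points are only moved by $\epsilon$ in $\Cscr^1$, so condition (d) fails there.

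What the paper actually does (Lemma~\ref{lem:main}) is a two-stage deformation. First, finitely many boundary points $p_{i,j}$ are \emph{exposed} along carefully chosen arcs $\lambda_{i,j}\subset B$ of intrinsic length exceeding $\delta$; these arcs are essentially radial and end near $bB$. This step, based on \cite{FW0} and the Mergelyan theorem for null curves, is what manufactures the intrinsic distance. Only then is Theorem~\ref{th:RH} applied, with each $\vartheta_{i,j}$ chosen \emph{orthogonal} to the radial direction $F(p_{i,j})$, to push the remaining boundary arcs outward; the Pythagorean relation $\hat s=\sqrt{s^2+\delta^2}$ governs how a tangential push of size $\delta$ near the sphere of radius $s$ lands near the sphere of radius $\hat s$. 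Your proposal is missing the exposing-points stage entirely, and without it the argument cannot close. (A smaller point: perturbing the limit $G$ at the end by \cite[Theorem~2.4]{AF2} would destroy properness and completeness; the paper instead makes each $F_n$ an embedding and arranges the $\epsilon_n$ so that the limit is automatically embedded.)
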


As usual in such constructions, Theorem \ref{th:main} follows from a recursive application of an approximation result, Lemma \ref{lem:main} below. 

We denote by $\b(s)=\{u\in\c^3\colon |u|<s\}$ the open ball of radius $s>0$ centered at the origin in $\c^3$, and by $\overline{\b}(s)=\{u\in\c^3\colon |u|\leq s\}$ the closed ball with same center and radius. Recall  from \S \ref{sec:prelim} that $\Bscr( M)$ is the set of all bordered Runge domains $\Rcal\Subset M$ homeotopic to $M$, and $\IA( M)$ is the set of all $\Cscr^1$ null immersions $\overline  M\to \c^3$ that are holomorphic on $M$.

%
%
%
%
\begin{lemma}\label{lem:main}
Let $ M$ be a bordered Riemann surface, let $\Rcal\in\Bscr( M)$, let $z_0$ be a point in $\Rcal$, let $F\in \IA( M)$, and let $\epsilon$, $\rho$, and $s>\epsilon$ be positive numbers. Assume that
\begin{enumerate}[\rm (i)]
\item $F(\overline{ M} \setminus  \Rcal)\subset \b(s) \setminus  \overline{\b}(s-\epsilon)$, and
\item $\dist_{(\overline{M},F)}(z_0,b M)>\rho$.
\end{enumerate}
Then, for any $\hat{\epsilon}>0$ and $\delta>0$, there exists $\wh F \in \IA( M)$ 
enjoying the following properties: 
\begin{enumerate}[\rm (L1)]
\item $\|\wh F -F\|_{1,\overline{\Rcal}}<\hat{\epsilon}$.
\item $\wh F (bM)\subset \b(\hat{s}) \setminus  \overline{\b}(\hat{s}-\hat{\epsilon})$, where $\hat{s} :=\sqrt{s^2+\delta^2}$.
\item $\wh F (\overline{ M}\setminus  \Rcal)\cap\overline{\b}(s-\epsilon)=\emptyset$.
\item $\dist_{(\overline{M},\wh F )}(z_0,b M)>\hat{\rho} :=\rho+\delta$.
\end{enumerate}
\end{lemma}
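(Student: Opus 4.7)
The plan is to apply Theorem \ref{th:RH} to $F$ with direction vectors chosen to be Hermitian-orthogonal to $F$, so that the Pythagorean identity lifts $|F|$ from $\approx s$ to $\hat s=\sqrt{s^2+\delta^2}$, while the rapid angular variation of the Riemann-Hilbert correction contributes the extra intrinsic length $\delta$.

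\textbf{Setup.} I partition $bM$ into finitely many pairwise disjoint short closed arcs $I_1,\ldots,I_k$, each a proper subarc of a boundary component, together covering $bM$ up to narrow gaps. On each $I_i$ I pick a base point $x_i$ and a null vector $\vartheta_i\in\Agot\setminus\{0\}$ with $|\vartheta_i|=\delta$ and $\langle F(x_i),\vartheta_i\rangle=0$; such $\vartheta_i$ exists because the complex hyperplane $\langle F(x_i)\rangle^\bot$ intersects $\Agot\setminus\{0\}$ (cf.\ \S \ref{subsec:null}). By continuity and smallness of the arcs, $|\langle F(x),\vartheta_i\rangle|$ stays uniformly small on $I_i$. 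I then choose a continuous size function $\mu\colon bM\to\r_+$ supported in $\bigcup I_i$, identically $1$ on the bulk of each arc and tapering to $0$ at its endpoints. Applying Theorem \ref{th:RH} with tolerance $\epsilon'\ll \min\{\hat\epsilon,\delta\}$ yields $\wh F\in\IA(M)$ and an (arbitrarily thin) neighborhood $\Omega$ of $\bigcup I_i$.

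\textbf{Checking (L1)--(L3).} Since $\Omega$ can be taken disjoint from $\overline\Rcal$, property (iii) of Theorem \ref{th:RH} immediately furnishes (L1). For (L3), if $x\in\overline M\setminus(\Rcal\cup\Omega)$ then $\wh F(x)\approx F(x)$ lies outside $\overline\b(s-\epsilon)$ by hypothesis (i); if $x\in\Omega$, property (ii) together with the approximate orthogonality of $\vartheta_i$ and $F$ give
\[
|\wh F(x)|^2\ge |F(\rho(x))|^2+|\mu(\rho(x))\xi\vartheta_i|^2-\mathrm{err}\ge (s-\epsilon)^2-\mathrm{err}.
\]
For (L2) on the bulk of each $I_i$ where $\mu\equiv 1$, property (i) and orthogonality give $|\wh F(x)|^2\approx |F(x)|^2+\delta^2$, lying close to $\hat s^2$ since $|F(x)|\in (s-\epsilon,s)$.

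\textbf{Main obstacles.} The first obstacle is that Theorem \ref{th:RH} forbids $\mu$ from being supported on a whole boundary component, so gap regions of $bM$ remain where $|\wh F|\approx s$, violating (L2). The fix is a second application of Theorem \ref{th:RH} using the updated curve as the central one, with new arcs $I_j'$ covering the gaps (and the round-one transition strips), new null directions $\vartheta_j'$ Hermitian-orthogonal to $\wh F$ at base points in the gaps, and a new size function $\mu'$ tuned pointwise so that $(\mu'(x)|\vartheta_j'|)^2\approx \hat s^2-|\wh F(x)|^2$. This restores $|\wh F^{(2)}(x)|\approx \hat s$ everywhere on $bM$ while leaving the bulk (where $\mu'=0$) undisturbed, and the cumulative $\Cscr^1$ error on $\overline\Rcal$ stays below $\hat\epsilon$ by choosing $\epsilon'$ small. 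The second obstacle is proving (L4): for any path $\gamma\colon[0,1]\to\overline M$ from $z_0$ to $bM$, I split $\gamma=\gamma_1*\gamma_2$ with $\gamma_1\subset\overline M\setminus\Omega$ and $\gamma_2\subset\Omega$. Property (iii) makes the $\wh F$-length of $\gamma_1$ close to its $F$-length, and choosing $\Omega$ a thin enough tubular neighborhood of $bM$ so that $\dist_{(M,F)}(z_0,b\Omega_{\mathrm{inner}})>\rho-\mathrm{err}$ yields $\mathrm{length}_{\wh F}(\gamma_1)>\rho-\mathrm{err}$. For $\gamma_2$ one must invoke the explicit form of the Riemann-Hilbert correction inside the proof of Lemma \ref{lem:RH}: in local boundary coordinates the new map is close to $F(\zeta)+\tilde\eta^2(\zeta)\zeta^{2n+1}\vartheta$, so orthogonally projecting the image curve $\wh F\circ\gamma_2$ onto the complex line $\span\{\vartheta\}$ produces a planar curve joining $\approx \mu(\rho(\gamma(t_1)))(r')^{2n+1}\xi_1\vartheta$ to $\approx \mu(x)\xi_2\vartheta$ with $|\xi_2|=1$, whose length (and hence that of $\wh F\circ\gamma_2$) is bounded below by $\mu(x)(1-(r')^{2n+1})|\vartheta|\approx \delta$. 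Summing gives $\mathrm{length}_{\wh F}(\gamma)>\rho+\delta$, proving (L4). I expect the execution of the two-round boundary-covering construction, preserving all estimates simultaneously, to be the main technical obstacle.
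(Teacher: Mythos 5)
Your overall strategy — use Theorem \ref{th:RH} with null direction vectors chosen Hermitian-orthogonal to $F$, gain modulus from $s$ to $\hat s=\sqrt{s^2+\delta^2}$ via Pythagoras, and gain intrinsic length $\delta$ from the rapid angular oscillation of the Riemann--Hilbert correction — is exactly the right idea and is what drives Step~3 of the paper's proof. You have also correctly identified the key obstacle: Theorem \ref{th:RH} forbids the size function $\mu$ from being supported on a whole boundary component, so there are always ``gap'' regions of $bM$ left unpushed. However, your proposed fix (a second application of Theorem \ref{th:RH} to cover the gaps) does not close, and this is where your proposal departs from, and falls short of, the paper's argument.

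The paper does not do two rounds of Riemann--Hilbert. Instead, it inserts an intermediate step (Step~2 in the proof): the technique of \emph{exposing boundary points} of Riemann surfaces, borrowed from \cite{FW0}. Before the Riemann--Hilbert step, one attaches auxiliary null arcs $\lambda_{i,j}$ at the images of the division points $p_{i,j}$ and uses the exposing map $\phi$ to produce a new central curve $F_0=\wh F\circ\phi$ whose values at the $p_{i,j}$ are \emph{already} in the target annulus $\b(\hat s)\setminus\overline\b(\hat s-\hat\epsilon)$ (property (c1)). This is what ``grounds'' the endpoints of the Riemann--Hilbert arcs: on the taper-off pieces $\overline{C_{i,j}\setminus\beta_{i,j}^2}$, where $\mu\to 0$, the unmodified curve is already at the correct modulus, so no further pushing is needed there. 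Property~(L2) then holds on all of $bM$, and property~(L4) is proved by a case split: paths through the small neighborhoods $U_{i,j}^2$ of the exposed points gain length $\delta$ from Step~2 (property (d9) traces back to property (b5) of the arcs $\lambda_{i,j}$), whereas paths avoiding these neighborhoods gain length $\delta$ from the extrinsic Pythagorean estimate \eqref{eq:Pytha} together with the Riemann--Hilbert push.

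Your two-round alternative runs into two concrete problems. First, for (L2): in your round~1 you take $\mu\equiv 1$ on the bulk and $|\vartheta_i|=\delta$, which yields $|\wh F(x)|^2\approx|F(x)|^2+\delta^2$ with $|F(x)|$ ranging over $(s-\epsilon,s)$; when $\hat\epsilon$ is small compared to $\epsilon$, the value $\sqrt{(s-\epsilon)^2+\delta^2}$ lies \emph{below} $\hat s-\hat\epsilon$, so (L2) fails on the bulk of round-1 arcs. Tuning $\mu$ pointwise (as you do in round~2) would repair this on round-1 arcs, but then the round-1 size function tapers to zero near arc endpoints, and there the modulus of $\wh F$ drops back toward $s$. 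Second, and more seriously, on precisely these taper-off strips, $\wh F$ varies by an amount of order $\delta$ (and oscillates rapidly in phase), so in round~2 no single null vector $\vartheta'_j$ can be even approximately orthogonal to all the values $\wh F(x)$ takes over $I'_j$: the orthogonality error is itself of order $\delta$, which is the quantity you are trying to gain. The Pythagorean estimate therefore degrades by the same order as the intended gain, and the argument does not close. The paper's exposing-of-points step sidesteps both issues at once by moving the endpoint images to the target shell \emph{before} invoking Riemann--Hilbert, which is why it is not a dispensable convenience but the load-bearing ingredient you are missing.

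A smaller point: your (L4) estimate $\mathrm{length}_{\wh F}(\gamma_2)\gtrsim\mu(x)\bigl(1-(r')^{2n+1}\bigr)|\vartheta|\approx\delta$ relies on the endpoint $x$ of $\gamma$ lying where $\mu(x)\approx 1$; it says nothing about paths terminating in a gap or in a taper-off strip, which is exactly where the two-round construction is weakest. The paper's case split (near-$p_{i,j}$ versus away-from-$p_{i,j}$) together with (d9) and \eqref{eq:Pytha} handles all terminal points uniformly.
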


Lemma \ref{lem:main} is the analogue of \cite[Lemma 5]{AF1} in the context of null curves in $\c^3$. The main improvement with respect to previous related constructions of null curves in $\c^3$ and minimal surfaces in $\r^3$ is that we do not modify the complex structure on the curve; compare for instance with \cite[Lemma 5]{AFM}, \cite[Lemma 3]{AL2}, and \cite[Lemma 4.1]{AL-Israel}, among others. 


\begin{proof}[Proof of Theorem \ref{th:main} assuming Lemma \ref{lem:main}]
The proof is an adaptation of the one of \cite[Theorem 2]{AF1}; we indicate the main points and refer to the cited source for further details.

Let $ M$ be a bordered Riemann surface, let $F\in\IA( M)$, and let $B\subset\c^3$ be an open ball containing $F(\overline{ M})$.  Fix a compact set $K\subset  M$ and a positive number $\eta>0$. By a translation and a homothety we assume that $B=\b$ is the unit ball and that $F(b M)$ does not contain the origin of $\c^3$. Moreover, by general position we may assume that $F\colon\overline M\to\c^3$ is an embedding; see \cite[Theorem 2.4]{AF2}.

Pick numbers $0< \varepsilon < r<1$ such that $F(b M) \subset \b(r)\setminus \overline{\b}(r-\varepsilon)$. Choose a bordered domain $\Rcal\in\Bscr( M)$ such that  
$K\subset \Rcal$  and $F(\overline{ M} \setminus  \Rcal)\subset \b(r)\setminus \overline{\b}(r-\varepsilon)$.
Set $c :=\sqrt{6(1-r^2)}/\pi>0$. Fix a point $\zeta_0\in \Rcal$, and define sequences 
$\rho_n,r_n>0$ $(n\in \n\cup\{0\})$ recursively as follows:  
\[
   \rho_0=\dist_{( M,F)}(\zeta_0,b \Rcal),\enskip
   \rho_n=\rho_{n-1}+\frac{c}{n},\quad r_0=r, \enskip r_n=\sqrt{r_{n-1}^2+\frac{c^2}{n^2}}.
\]
Notice that
\begin{equation}\label{eq:diverges-converges}
\lim_{n\to+\infty}\rho_n=+\infty \quad\text{and}\quad \lim_{n\to+\infty}r_n=1.
\end{equation}

Set $(\Rcal_0,F_0,\varepsilon_0):=(\Rcal,F,\varepsilon)$. We shall inductively construct sequences of bordered domains $\Rcal_n\in\Bscr( M)$, null holomorphic embeddings $F_n\in\IA( M)$, and constants $\varepsilon_n>0$  satisfying the following conditions for every $n\in\n$:
\begin{enumerate}[\rm (a$_n$)]
\item $\Rcal_{n-1}\Subset \Rcal_n$.
\item $\displaystyle \varepsilon_n <\min\Big\{ \frac{\varepsilon_{n-1}}2 \,,\, \frac1{2^n}\min\big\{ \min_{\overline{\Rcal}_j} |dF_j|\colon j=1,\ldots,n-1 \big\} \Big\}$ and every holomorphic map $G\colon \overline{ M}\to\c^3$ satisfying $\|G-F_{n-1}\|_{1,\overline{\Rcal}_{n-1}}<2\varepsilon_n$ is an embedding on $\overline{\Rcal}_{n-1}$.
\item $\|F_n-F_{n-1}\|_{1,\overline{\Rcal}_{n-1}}<\varepsilon_n$.
\item $F_n(\overline{ M}\setminus \Rcal_n)\subset \b(r_n)\setminus \overline{\b}(r_n-\varepsilon_n)$.
\item $F_n(\overline{ M}\setminus \Rcal_{n-1})\cap \overline{\b}(r_{n-1}-\varepsilon_{n-1})=\emptyset$.
\item $\dist_{( M,F_n)}(\zeta_0,b \Rcal_n)>\rho_n$.
\end{enumerate}
We will also ensure that 
\begin{equation}\label{eq:cupN}
	 M=\bigcup_{n\in\n}\Rcal_n.
\end{equation}

For the basis of the induction, take a number $0<\varepsilon_1<\eta/2$ small enough so that {\rm (b$_1$)} holds; such $\varepsilon_1$ exists since $F_0\in\IA( M)$ is an embedding. Applying Lemma \ref{lem:main} to the data
\[
(M\,,\, \Rcal \,,\, z_0 \,,\, F \,,\, \epsilon \,,\, \rho \,,\, s \,,\, \hat{\epsilon} \,,\, \delta)= (M\,,\,\Rcal_0 \,,\, \zeta_0 \,,\, F_0 \,,\, \varepsilon_0 \,,\, \rho_0 \,,\,r_0 \,,\, \varepsilon_1 \,,\,c),
\]
one gets a null holomorphic immersion $F_1\in\IA( M)$, which can be assumed to be an embedding by general position argument \cite{AF2}. It is straightforward to check that {\rm (a$_1$)}--{\rm (f$_1$)} are satisfied up to fixing a sufficiently large bordered domain $\Rcal_1\in\Bscr( M)$.

For the inductive step, assume that for some $n>1$ we already have $(\Rcal_{j},h_{j},\varepsilon_{j})$ enjoying properties {\rm (a$_j$)}--{\rm (f$_j$)} for all $j\in\{1,\ldots,n-1\}$. Since $F_{n-1}\colon \overline{ M}\to\c^3$ is an embedding, there exists a number $\varepsilon_n>0$ satisfying {\rm (b$_n$)}.
Applying Lemma \ref{lem:main} to the data
\[
(M\,,\ \Rcal \,,\, z_0 \,,\, F \,,\, \epsilon \,,\, \rho \,,\, s \,,\, \hat{\epsilon} \,,\, \delta)= (M\,,\, \Rcal_{n-1} \,,\, \zeta_0 \,,\, F_{n-1} \,,\, \varepsilon_{n-1} \,,\, \rho_{n-1} \,,\,r_{n-1} \,,\, \varepsilon_n \,,\,\frac{c}{n}),
\]
one obtains a null holomorphic immersion $F_n\in\IA( M)$ which can be assumed an embedding by general position \cite{AF2} and, together with $\varepsilon_n$ and a sufficiently large domain $\Rcal_n \in \Bscr( M)$, meets all requirements {\rm (a$_n$)}--{\rm (f$_n$)}. 

Finally, \eqref{eq:cupN} holds if the bordered domain $\Rcal_n$ is chosen large enough in each step of the inductive process. This concludes the construction of the sequence $\{(\Rcal_n,F_n,\varepsilon_n)\}_{n\in\n}$.

Since $F_n\in\IA( M)$ for all $n\in\n$, properties {\rm (c$_n$)}, {\rm (b$_n$)}, and (\ref{eq:cupN}) ensure that the sequence $\{F_n\colon \overline { M}\to\c^3\}_{n\in\n}$ converges uniformly on compacta in $ M$ to a holomorphic map $\wh F \colon  M\to\c^3$ whose derivative, $\wh F'$, with respect to any local holomorphic coordinate on $M$ assumes values in the null quadric $\Agot$. 

Let us show that $\wh F$ satisfies the conclusion of Theorem \ref{th:main}. Indeed, from {\rm (b$_n$)} and {\rm (c$_n$)} we have that
\begin{equation}\label{eq:hatFclose}
\|\wh F-F_j\|_{1,\overline{\Rcal}_j}<2\varepsilon_{j+1}<\varepsilon_j<\eta/2^j\quad \forall j\in\n\cup\{0\};
\end{equation}
recall that $\varepsilon_1<\eta/2$. This and {\rm (c$_n$)} imply that $\wh F\colon M\to\c^3$ is a null holomorphic embedding which is $\eta$-close to $F=F_0$ in the $\Cscr^1$ topology on $\Rcal_0\supset K$. To finish, it suffices to check that $\wh F( M)\subset\b$ and $\wh F\colon M\to\b$ is complete and proper.

For the inclusion in $\b$, let $p\in M$. From {\rm (d$_n$)} and the Maximum Principle we have $|F_n(p)|<r_n$ for all $n\in\n$. By the second part of \eqref{eq:diverges-converges} and taking limits as $n\to+\infty$, one obtains  $|\wh F(p)|\leq 1$; hence $\wh F( M)\subset\b$ by the Maximum Principle.

Next we show that $\wh F\colon  M\to\b$ is a proper map. Pick any number $0<t<1$. Since $r_n\to 1$ and $\varepsilon_n\to 0$ as $n\to+\infty$, there exists $n_0\in\n$ large enough so that 
\begin{equation}\label{eq:r-t}
		t+\varepsilon_{n-1}+\eta/2^n<r_{n-1}\quad \forall n\geq n_0.
\end{equation}
On the other hand, {\rm (e$_n$)} and \eqref{eq:hatFclose} give that
\[
		\wh F(\overline{\Rcal}_n\setminus \Rcal_{n-1})\cap 			
	  \overline{\b}(r_{n-1}-\varepsilon_{n-1}-\eta/2^n)=\emptyset\quad \forall n\geq n_0.
\]
Together with \eqref{eq:r-t} this implies  
\[
		(\overline{\Rcal}_n\setminus \Rcal_{n-1})\cap \wh F^{-1}(\overline{\b}(t))
		=\emptyset\quad \forall n\geq n_0;
\]
hence $\wh F^{-1}(\overline{\b}(t))\subset \Rcal_{n_0}$ is compact in $ M$. 
This shows that $\wh F\colon  M\to\b$ is a proper map.

Finally, to verify completeness, let $p\in  M$ and $n_0\in\n$ be such that $p\in\Rcal_{n_0}$. From {\rm (c$_n$)} and {\rm (b$_n$)} it is not hard to infer that $2|d\wh F(p)|>|dF_{n_0}(p)|$; hence
\[
\dist_{( M,\wh F)}(\zeta_0,b\Rcal_n)>\frac12\,\dist_{( M,F_n)}(\zeta_0,b\Rcal_n) \stackrel{\text{\rm (f$_n$)}}{>} \frac{\rho_n}2\quad\forall n\in\n.
\]
Taking limits in the above inequality as $n\to+\infty$, the completeness of $\wh F$ follows from the former equation in \eqref{eq:diverges-converges}.

This concludes the proof of Theorem \ref{th:main} granted Lemma \ref{lem:main}.
\end{proof}


\begin{proof}[Proof of Lemma \ref{lem:main}]
The proof consists of adapting to the context of null holomorphic curves in $\c^3$ the technique that we developed in order to prove Lemma 5 in our previous paper \cite{AF1}. We will therefore emphasize the main differences with respect to the argument in \cite{AF1} and will omit some of the details which can be found in that paper. 

We begin with a couple of reductions. Replacing $\Rcal$ by a larger bordered domain in $\Bscr( M)$ if necessary, we assume without loss of generality that 
\begin{equation}\label{eq:lemma}
		\dist_{( M,F)}(z_0,b\Rcal)>\rho.
\end{equation}
We may assume that $M$ is a bordered domain in an open Riemann surface $\widehat{ M}$ such that $ M \in \Bscr(\widehat{M})$. By the Mergelyan theorem for null curves (see \cite[Lemma 1]{AL2} or \cite[Theorem 7.2]{AF2}) we may also assume that $F\in\IA( M)$ extends to a null holomorphic immersion $F\in\IA(\wh  M)$.

The proof consists of three different steps. The main novelties appear in the last one where the approximate solution to the Riemann-Hilbert problem for null curves (cf.\ Theorem \ref{th:RH} in \S\ref{sec:RH}) is invoked.


\subsubsection*{Step 1: Splitting $bM$}\label{sec:step1}
The first step in the proof consists of suitably splitting the boundary curves of $M$ into a finite collection of compact Jordan arcs. The splitting strongly depends on the placement in $\c^3$ of the image $F(bM)$ of the boundary of $M$.

Up to decreasing the number $\hat{\epsilon}>0$ if necessary, it follows from {\rm (i)}, the definition of $\hat s$ in {\rm (L2)}, the strict convexity and compactness of $\overline{\b}(s-\epsilon)$, and Pythagoras' theorem, that every point $u\in F(b M)$ admits an open neighborhood $\Uscr_u$ in $\c^3$ such that
\begin{equation}\label{eq:Pytha}
\dist\big( v\,,\, (\b(\hat{s}) \setminus  \overline{\b}(\hat{s}-\hat{\epsilon}))\cap (w+\langle z\rangle^\bot)\big)>\delta\quad \forall v,w,z\in \Uscr_u
\end{equation}
and
\begin{equation}\label{eq:far}
(v+\langle w\rangle^\bot)\cap\overline{\b}(s-\epsilon)=\emptyset\quad \forall v,w\in \Uscr_u.
\end{equation}
Furthermore, since $F(b M)\subset \b(s)\Subset\b (\hat s)$ and these balls are centered at the origin of $\c^3$, we may assume (after possibly shrinking the sets $\Uscr_u$) that for every point $u\in F(bM)$ there exists a real constant $\tau=\tau(u)>0$ such that
\begin{equation}\label{eq:sphere}
v+(\langle w\rangle^\bot \cap \{z\in\c^3\colon |z|=\tau\}) \subset \b(\hat s)\setminus\overline \b (\hat s-\hat\epsilon)\quad \forall v,w\in \Uscr_u.
\end{equation}

Set $\Uscr:=\{\Uscr_u\colon u\in F(b M)\}.$

Denote by $ C_1,\ldots, C_\igot$ the connected boundary curves of $\overline{ M}$; these are pairwise disjoint smooth closed Jordan curves in $\widehat{M}$ and $bM=\bigcup_{i=1}^\igot  C_i$. Since $\Uscr$ is an open covering of the compact set $F(b  M)\subset\c^2$, we may choose an integer $\jgot \geq 3$ and compact connected subarcs $\{ C_{i,j}\colon (i,j) \in \I:=\{1,\ldots,\igot\}\times \z_\jgot\}$ (here $\z_\jgot=\{0,\ldots,\jgot-1\}$ denotes the additive cyclic group of integers modulus $\jgot$), enjoying the following properties:
\begin{enumerate}[\rm ({a}1)]
\item $\bigcup_{j=1}^\jgot C_{i,j}= C_i$.
\item $ C_{i,j}$ and $ C_{i,j+1}$ have a common endpoint, namely $p_{i,j}$, and are otherwise disjoint.
\item There exist points $a_{i,j}\in \b(\hat{s})\setminus \overline{\b}(\hat{s}-\hat{\epsilon})$ such that 
\[
(a_{i,j}+\langle F(p_{i,k})\rangle^\bot)\cap \overline{\b}(\hat{s}-\hat{\epsilon})=\emptyset \quad \forall k\in\{j,j+1\}.
\] 
\item For every $(i,j) \in \I$ there exists a set $\Uscr_{i,j}\in\Uscr$ containing $F( C_{i,j})$. In particular $F(p_{i,j})\in \Uscr_{i,j}\cap\Uscr_{i,j+1}$.
\item For every point $u\in \span\{F(p_{i,j})\}\cap \overline\b(\hat s)$ there exists a constant $\tau>0$ such that
\[
u+(\langle F(p_{i,k})\rangle^\bot \cap \{z\in\c^3\colon |z|=\tau\}) \subset \b(\hat s)\setminus\overline \b (\hat s-\hat\epsilon)\quad \forall k\in\{j,j+1\}.
\]
\end{enumerate}

To find a splitting with these properties, simply choose the arcs $ C_{i,j}$ such that their images $F( C_{i,j}) \subset \c^2$ have small enough diameter.


\subsubsection*{Step 2: Stretching from the points $p_{i,j}$}\label{sec:step2}

We now describe how to attach to the immersed null curve $F(\overline M)\subset \c^3$ a family of compact arcs at the points $\{F(p_{i,j})\colon (i,j)\in \I\}$ and then stretch the image along these arcs, modifying the map only slightly away from the points $p_{i,j}$. The resulting null curve is still normalized by $\overline M$, and the boundary distance to the points $p_{i,j}$ increases by at least $\delta$ (see (L4)).

For every $(i,j)\in\I$ we choose an embedded real analytic arc $\gamma_{i,j}\subset\widehat M$ that is attached to $\overline M$ at the endpoint $p_{i,j}$ and intersects $b M$ transversely there, and is otherwise disjoint from $\overline M$. We take the arcs $\gamma_{i,j}$ to be pairwise disjoint. Let $q_{i,j}$ denote the other endpoint of $\gamma_{i,j}$. We split $\gamma_{i,j}$ into compact subarcs $\gamma_{i,j}^1$ and $\gamma_{i,j}^2$, with a common endpoint, so that $p_{i,j}\in\gamma_{i,j}^1$ and $q_{i,j}\in\gamma_{i,j}^2$.

Choose compact smooth embedded arcs $\lambda_{i,j} \subset \c^3$ satisfying the following properties:
\begin{enumerate}[\rm ({b}1)]
\item $\lambda_{i,j}\subset \b(\hat{s})\setminus \overline{\b}(s-\epsilon)$.
\item $\lambda_{i,j}$ is split into compact subarcs $\lambda_{i,j}^1$ and $\lambda_{i,j}^2$ with a common endpoint.
\item $\lambda_{i,j}^1$ agrees with the arc $F(\gamma_{i,j})$ near the endpoint $F(p_{i,j})$; recall that $F\in\IA(\wh M)$.
\item $\lambda_{i,j}^1\subset \Uscr_{i,j}\cap\Uscr_{i,j+1}$; see {\rm (a4)} and {\rm (b3)}.
\item If $J\subset\lambda_{i,j}^1$ is a Borel measurable subset, then
\begin{eqnarray*}
\min\{\length(\pi_{i,j}(J)),\length(\pi_{i,j+1}(J))\}&+&\\
\min\{\length(\pi_{i,j}(\lambda_{i,j}^1\setminus J)),\length(\pi_{i,j+1}(\lambda_{i,j}^1\setminus J))\}&>&\delta,
\end{eqnarray*}
where $\pi_{i,k}\colon \c^3\to\span\{F(p_{i,k})\}\subset\c^3$ denotes the orthogonal projection; observe that $F(p_{i,k})\neq 0$ by Lemma \ref{lem:main}-{\rm (i)}.
\item $(\lambda_{i,j}^2+\langle F(p_{i,k})\rangle^\bot)\cap \overline{\b}(s-\epsilon)=\emptyset$ for $k\in \{j,j+1\}$; see {\rm (a4)}, {\rm (b4)}, and \eqref{eq:far}.
\item The endpoint $v_{i,j}$ of $\lambda_{i,j}$ contained in the subarc $\lambda_{i,j}^2$ lies in $\span\{F(p_{i,j})\}\cap\big(\b(\hat{s})\setminus \overline{\b}(\hat{s}-\hat{\epsilon})\big)$ and satisfies $(v_{i,j}+\langle F(p_{i,k})\rangle^\bot)\cap \overline{\b}(\hat{s}-\hat{\epsilon})=\emptyset$ for $k\in\{j,j+1\}$; see {\rm (a3)}.
\item For every point $u\in \lambda_{i,j}^2$ there exists a constant $\tau>0$ such that
\[
u+(\langle F(p_{i,k})\rangle^\bot \cap \{z\in\c^3\colon |z|=\tau\}) \subset \b(\hat s)\setminus\overline \b (\hat s-\hat\epsilon)\quad \forall k\in\{j,j+1\}.
\]
\item The tangent vector to $\lambda_{i,j}$ at any point lies in $\Agot\setminus\{0\}$; see \eqref{eq:Agot} and take into account property {\rm (b3)}.
\end{enumerate}

To construct such arcs $\lambda_{i,j}$, we first chose arcs $\wh\lambda_{i,j}$ meeting all the above requirements except for {\rm (b8)} and {\rm (b9)}; such were provided in \cite[Sec.\ 3.2]{AF1}. In order to ensure also {\rm (b8)}, we simply take $\lambda_{i,j}^2$ to be close to a segment in $\span\{F(p_{i,j})\}$; take into account \eqref{eq:sphere} and {\rm (a5)}. Then we use Lemma 7.3 in \cite{AF2}, which is analogous to Gromov's convex integration lemma, in order to approximate the arcs $\wh\lambda_{i,j}$ by another ones $\lambda_{i,j}$ meeting also {\rm (b9)}.

Notice that the compact set 
\[
	K:=\overline{ M}\cup \bigcup_{(i,j)\in \I}\gamma_{i,j} \subset \widehat M
\]
is {\em admissible} in the sense of \cite[Def.\ 2.2]{AL1} (see also \cite[Def. 2]{AL2} or \cite[Def.\ 7.1]{AF2}). Furthermore, property {\rm (b3)} enables one to find a smooth map $F_{\rm e}\colon \widehat  M \to\c^3$ which agrees with $F$ on an open neighborhood of $\overline  M$, and which maps the arcs $\gamma_{i,j}^1$ and $\gamma_{i,j}^2$ diffeomorphically onto the corresponding arcs $\lambda_{i,j}^1$ and $\lambda_{i,j}^2$ for all $(i,j)\in\I$. Taking into account {\rm (b9)}, the map $F_{\rm e}\colon K\to \c^3$ is a {\em generalized null curve} in the sense of \cite[Def.\ 5]{AL2}. Since the set $K$ is admissible, the Mergelyan theorem for null holomorphic immersions \cite{AF2,AL2} applies, allowing one to approximate $F_{\rm e}$, uniformly on an open neighborhood of $\overline{ M}$ and in the $\Cscr^1$ topology on each of the arcs $\gamma_{i,j}$, by a null holomorphic immersion $\wt F\colon 
 \widehat  M \to\c^3$. 

Let $V\subset \widehat  M$ be a small open neighborhood of $K$. For every $(i,j)\in\I$ we choose a pair of small neighborhoods $W'_{i,j} \Subset W_{i,j} \Subset \widehat  M\setminus \overline \Rcal$ of the point $p_{i,j}$ and a neighborhood $V_{i,j} \Subset \widehat  M \setminus \overline \Rcal$ of the arc $\gamma_{i,j}$. (Here $\Rcal\in\Bscr(M)$ is the domain in the statement of the lemma.) In this setting, \cite[Theorem 2.3]{FW0} (see also \cite[Theorem 8.8.1]{F2011}) furnishes a smooth diffeomorphism $\phi\colon \overline { M}\to\phi (\overline{ M})\subset V$ satisfying the following properties (see Fig.\ \ref{fig:Phi}):
\begin{itemize}
\item $\phi \colon  M \to  \phi( M)$ is a biholomorphism.
\item $\phi$ is as close as desired to the identity in the $\Cscr^1$ topology on $\overline  M \setminus \bigcup_{(i,j)\in\I} W'_{i,j}$. 
\item $\phi(p_{i,j}) = q_{i,j}$ and $\phi(\overline  M \cap W'_{i,j}) \subset W_{i,j} \cup V_{i,j}$ for all $(i,j)\in\I$. 
\end{itemize}
Furthermore, up to slightly deforming each of the arcs $\gamma_{i,j}$, keeping its endpoints fixed, we can replace $\lambda_{i,j}$ by the arc $\wt F(\gamma_{i,j})$ preseving the properties {\rm (b1)}--{\rm (b9)} above; recall that $\wt F$ is null. Therefore, we can also assume  that
\begin{itemize}
\item $\gamma_{i,j}\setminus\{q_{i,j}\}\subset\phi( M \setminus \overline{\Rcal})$ for all $(i,j)\in\I$.
\end{itemize} 

Set 
\begin{equation} \label{eq:sigma}
\sigma_{i,j}^k=\phi^{-1}(\gamma_{i,j}^k)\; \text{for $k\in\{1,2\}$}, \qquad \sigma_{i,j}=\phi^{-1}(\gamma_{i,j})=\sigma_{i,j}^1\cup \sigma_{i,j}^2.
\end{equation}
(See Fig.\ \ref{fig:Phi}.)
\begin{figure}[ht]
    \begin{center}
    \resizebox{0.95\textwidth}{!}{\includegraphics{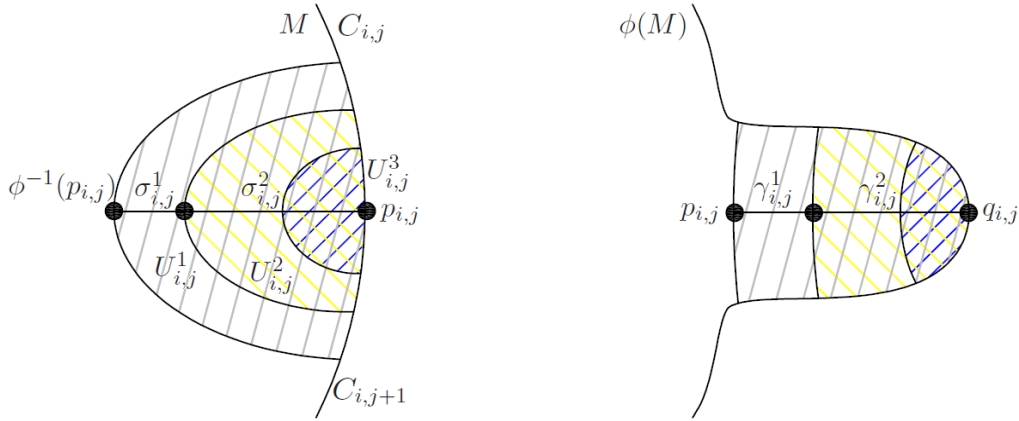}}
        \end{center}
        \vspace{-0.25cm}
\caption{The biholomorphism $\phi$.}\label{fig:Phi}
\end{figure} 
If one chooses the diffeomorphism $\phi$ close enough to the identity in the $\Cscr^1$ topology on $\overline  M \setminus \bigcup_{(i,j)\in\I} W'_{i,j}$, then the map
\begin{equation}
\label{eq:f0}
		F_0:=\wh F\circ\phi\colon \overline{ M}\to\c^3
\end{equation}
lies in $\IA( M)$ and enjoys the following properties:
\begin{enumerate}[\rm ({c}1)]
\item $F_0(p_{i,j})\in \b(\hat{s})\setminus \overline{\b}(\hat{s}-\hat{\epsilon})$ and $(F_0(p_{i,j})+\langle F(p_{i,k})\rangle^\bot)\cap \overline{\b}(\hat{s}-\hat{\epsilon})=\emptyset$ for $k\in\{j,j+1\}$ and $(i,j)\in\I$; see {\rm (b7)} and recall that $F_0(p_{i,j})=\wt{F}(q_{i,j})\approx F_{\rm e}(q_{i,j})=v_{i,j}$.
\item $\|F_0-F\|_{1,\overline{\Rcal}}<\hat{\epsilon}/2$.
\item $F_0(\overline{ M}\setminus \Rcal)\subset \b(\hat{s})\setminus \overline{\b}(s-\epsilon)$; see properties Lemma \ref{lem:main}-{\rm (i)} and {\rm (b1)}.
\item $\dist_{( M,F_0)}(z_0,b\Rcal)>\rho$; see \eqref{eq:lemma}.
\item For every point $p\in  C_{i,j}$, $(i,j)\in\I$, there exists a constant $\tau>0$, which can be taken to depend continuously on $p$, such that
\[
F_0(p)+(\langle F(p_{i,j})\rangle^\bot \cap \{z\in\c^3\colon |z|=\tau\}) \subset \b(\hat s)\setminus\overline \b (\hat s-\hat\epsilon);
\]
see \eqref{eq:sphere}, {\rm (b4)}, {\rm (b8)}, and \eqref{eq:sigma}.
\item $(F_0( C_{i,j})+\langle F(p_{i,j})\rangle^\bot)\cap \overline\b(s-\epsilon)=\emptyset$; see \eqref{eq:far}, {\rm (a4)}, {\rm (b4)}, and {\rm (b6)}.
\end{enumerate}

If the above approximations are close enough and the neighborhood $V$ of $K$ (containing the image $\phi(\overline  M)$) is small enough, then properties {\rm (b1)}--{\rm (b9)} and {\rm (c1)}--{\rm (c6)} allow one to find simply connected neigh\-bor\-hoods $U_{i,j}^3\Subset U_{i,j}^2\Subset U_{i,j}^1$ of the point $p_{i,j}$ in $\overline M\setminus \overline{\Rcal}$, $(i,j)\in\I$, meeting the following requirements  (see Figs.\ \ref{fig:Phi} and \ref{fig:R}):
\begin{enumerate}[\rm ({d}1)]
\item $\overline{U}_{i,j}^1\cap \overline{U}_{i,k}^1=\emptyset$ if $k\neq j$.
\item $\overline{U}_{i,j}^1\cap  C_{i,k}=\emptyset$ if $k\notin\{j,j+1\}$.
\item $\overline{U}_{i,j}^1\cap  C_{i,k}$ is a connected compact Jordan arc for  $k\in\{j,j+1\}$.
\item $\beta_{i,j}^k :=\overline{ C_{i,j}\setminus (U_{i,j}^{k+1}\cup U_{i,j-1}^{k+1})}$ are connected compact Jordan arcs for $k\in\{1,2\}$, and $F_0(\beta_{i,j}^1)\subset\Uscr_{i,j}$; see {\rm (a4)}.

\item $\sigma_{i,j}^1\subset \overline{U_{i,j}^1\setminus U_{i,j}^2}$, $\sigma_{i,j}^2\subset\overline{U}_{i,j}^2$ (see (\ref{eq:sigma})), and $\phi^{-1}(p_{i,j})\in (b {U}_{i,j}^1)\cap M$.

\begin{figure}[ht]
    \begin{center}
    \resizebox{0.9 \textwidth}{!}{\includegraphics{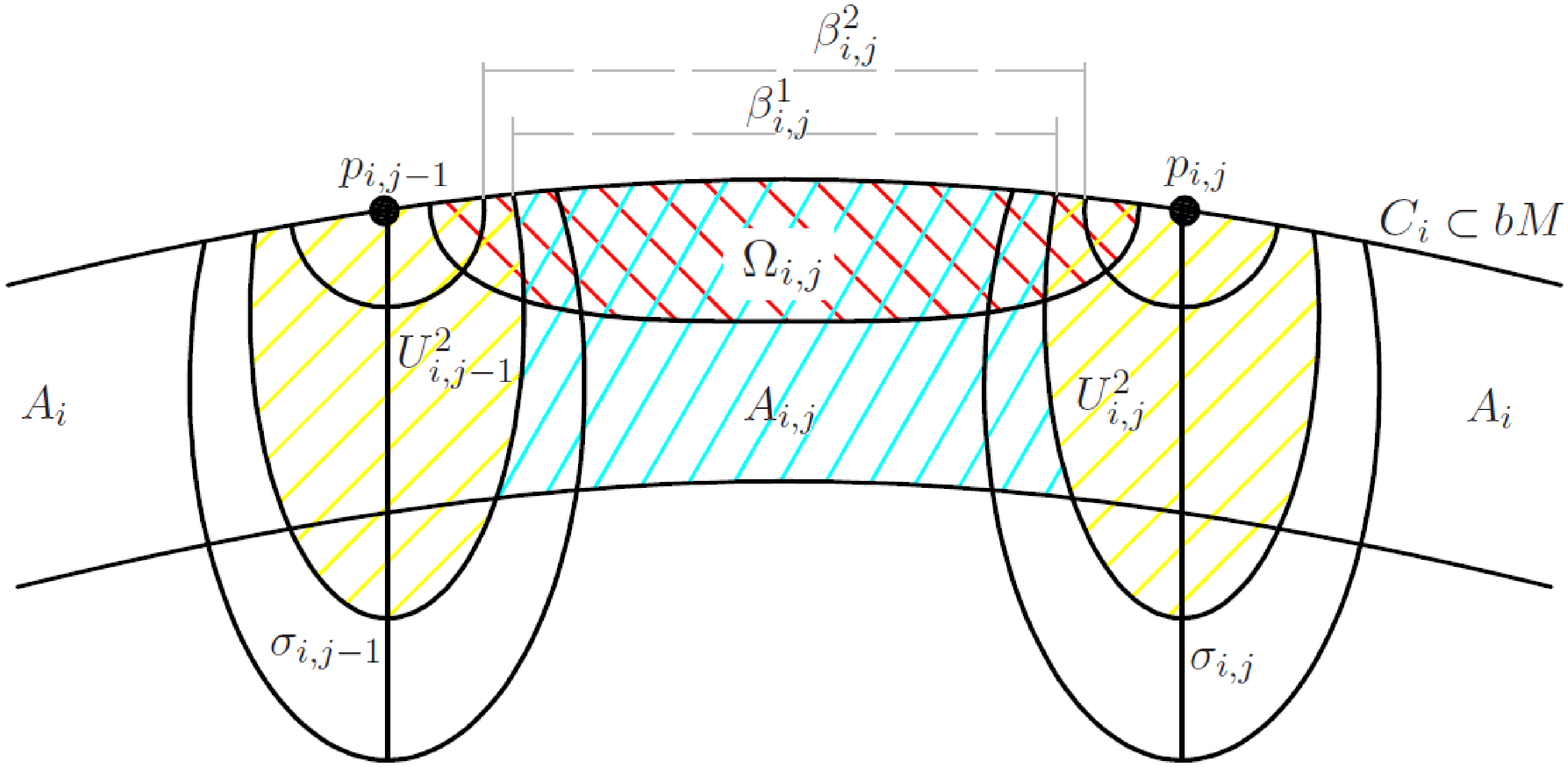}}
        \end{center}
        \vspace{-0.25cm}
\caption{The sets in $\overline M\setminus \overline{\Rcal}$.}\label{fig:R}
\end{figure}

\item $F_0(\overline{U}_{i,j}^3)\subset\b(\hat{s})\setminus \overline{\b}(\hat{s}-\hat{\epsilon})$ and $(F_0(\overline{U}_{i,j}^3)+\langle F(p_{i,k})\rangle^\bot)\cap \overline{\b}(\hat{s}-\hat{\epsilon})=\emptyset$ for  $k\in\{j,j+1\}$; see {\rm (c1)}.
\item $\left( F_0(\overline{U}_{i,j}^2) + \langle F(p_{i,k})\rangle^\bot \right) \cap \overline{\b}(s-\epsilon)=\emptyset$ for all $k\in \{j,j+1\}$; see {\rm (b6)}.
\item $F_0(\overline{U_{i,j}^1\setminus U_{i,j}^2})\subset \Uscr_{i,j}\cap\Uscr_{i,j+1}$; see {\rm (b4)}.
\item If $\gamma\subset \overline{U}_{i,j}^1$ is an arc connecting $ M\setminus U_{i,j}^1$ and $\overline{U}_{i,j}^2$, and $J\subset \gamma$ is a Borel measurable subset, then, see {\rm (b5)},
\begin{eqnarray*}
\min\{\length(\pi_{i,j}(F_0(J))),\length(\pi_{i,j+1}(F_0(J)))\}&+&\\
\min\{\length(\pi_{i,j}(F_0(\gamma \setminus J))),\length(\pi_{i,j+1}(F_0(\gamma\setminus J)))\}&>&\delta.
\end{eqnarray*}
\end{enumerate}


\subsubsection*{Step 3: Stretching from the arcs $ C_{i,j}$}\label{sec:step3}

In the final step of the proof we use approximate solutions of certain Riemann-Hilbert problems for null curves in $\c^3$, provided by Theorem \ref{th:RH}, in order to stretch the images of the central parts of the arcs $ C_{i,j}$ very close to the sphere of radius $\hat s$ in $\c^3$.

For every $i=1,\ldots,\igot$, we choose a small compact annular neighborhood $A_i\subset\overline M\setminus\overline\Rcal$ of the boundary curve $ C_i$ and a smooth retraction 
$\rho_i\colon A_i\to C_i$, such that $A_i\cap \overline U_{i,j}^2$ is a compact Jordan arc and $A_i\setminus \bigcup_{j=1}^\jgot U_{i,j}^2$ consists of $\jgot$ pairwise disjoint closed discs $A_{i,j}$, $j\in\z_\jgot$. Assume that $A_{i,j}\cap C_{i,k}\neq \emptyset$ if and only if $j=k$. (See Fig.\ \ref{fig:R}.) From {\rm (d4)} we may choose $A_i$ so that $\rho_i(A_{i,j})=\beta_{i,j}^1$ and
\begin{equation}
\label{eq:Aij}
F_0(A_{i,j})\subset\Uscr_{i,j}\quad \forall (i,j)\in\I.
\end{equation}

For every $(i,j)\in\I$, take a unitary null vector 
\begin{equation}\label{eq:vartij}
\vartheta_{i,j}\in (\Agot\setminus\{0\})\cap \langle F(p_{i,j})\rangle^\bot;
\end{equation}
such exists since $\langle F(p_{i,j})\rangle^\bot$ is a complex vectorial hyperplane of $\c^3$; recall that $F(p_{i,j})\neq 0$ by Lemma \ref{lem:main}-{\rm (i)}. Combining {\rm (c5)}, {\rm (c6)}, {\rm (d6)}, and \eqref{eq:vartij}, we can easily find a continuous function $\mu\colon b M\to \r_+$ such that 
\begin{equation}\label{eq:muij}
\mu=0\quad \text{on $bM\setminus \bigcup_{(i,j)\in\I}\beta_{i,j}^2$},
\end{equation}
and the map
\begin{equation}\label{eq:chi}
	\varkappa\colon bM \times\overline{\d}\to\c^3,\quad 
	\varkappa(p,\xi)=\left\{\begin{array}{ll}
	F_0(p); & p \in bM\setminus \bigcup_{(i,j)\in\I} \beta_{i,j}^2 \\
	F_0(p)+\mu(p)\,\xi\,\vartheta_{i,j}; & p\in  \beta_{i,j}^2,\; (i,j)\in\I,
	\end{array}\right.
\end{equation}
satisfies the following conditions:
\begin{enumerate}[\rm ({e}1)]
\item $\varkappa(p,b \d)\subset \b(\hat s)\setminus \overline\b(\hat s-\hat\epsilon)$ for all $p\in bM$.
\item $\varkappa(p,\overline\d)\cap \overline\b(s-\epsilon)=\emptyset$ for all $p\in bM$.
\end{enumerate}
Since the compact arcs $\beta_{i,j}^2$, $(i,j)\in\I$, are pairwise disjoint, the continuity of $\mu$ and \eqref{eq:muij} ensure that $\varkappa$ is well defined and continuous.

In this setting, Theorem \ref{th:RH} gives small open neighborhoods $\Omega_{i,j}\subset A_i$ of the arcs $\beta_{i,j}^2$ and a holomorphic null immersion $\wh F\in\IA(M)$ satisfying the following conditions:
\begin{enumerate}[\rm ({f}1)]
\item $\Omega_{i,j}\cap (\sigma_{i,j-1}\cup\sigma_{i,j})=\emptyset$ and $b \Omega_{i,j}\cap bA_i$ is a compact arc contained in the relative interior of $ C_{i,j}$; see Fig.\ \ref{fig:R}.
\item $\wh F(p)\in \b(\hat{s})\setminus \overline{\b}(\hat{s}-\hat{\epsilon})$ for all $p\in bM$; see {\rm (e1)} and Theorem \ref{th:RH}-{\it i)}. In particular, $\wh F(\overline M)\subset \b(\hat s)$ by the Maximum Principle.
\item $\pi_{i,j}\circ \wh F$ is close to $\pi_{i,j}\circ F_0$ on $\overline \Omega_{i,j}$; see \eqref{eq:chi}, \eqref{eq:vartij}, and Theorem \ref{th:RH}-{\it ii)}.
\item $\wh F$ is close to $F_0$ in the $\Cscr^1$ topology on $M':=\overline M\setminus \bigcup_{(i,j)\in\I} \Omega_{i,j}$; see Theorem \ref{th:RH}-{\it ii)}, {\it iii)}.
\item $\wh F(\overline M\setminus \Rcal)\cap \overline{\b}(s-\epsilon)=\emptyset$; see {\rm (c3)}, {\rm (e2)}, and Theorem \ref{th:RH}-{\it ii)}, {\it iii)}.
\end{enumerate}

By the Mergelyan theorem for null curves  \cite{AL2,AF2} we may assume that $\wh F$ extends (with the same name) to a null holomorphic immersion of a neighborhood of $\overline  M$ in $\widehat  M$. 


Let us verify that $\wh F$ meets all the requirements of Lemma \ref{lem:main} provided that the approximations in {\rm (f3)} and {\rm (f4)} are close enough. 

Property {\rm (L1)} follows from {\rm (c2)} and {\rm (f4)}; recall that $\overline\Rcal\subset M'$. Properties {\rm (L2)} and {\rm (L3)} agree with {\rm (f2)} and {\rm (f5)}, respectively. 

In order to verify property {\rm (L4)} we have to work a little harder. In view of {\rm (f4)} and {\rm (c4)}, it suffices to check that $\dist_{(\overline M,\wh F)}(b\Rcal,b M)>\delta$. Let $\gamma$ be any arc in $\overline M\setminus \Rcal$ with one endpoint in $b\Rcal$ and the other one in $bM$. We wish to see that  $\length(\wh F(\gamma))>\delta$. 

Assume first  that $\gamma\cap \overline{U}_{i,j}^2\neq\emptyset$ for some $(i,j)\in\I$. In this case there exists a subarc $\hat \gamma \subset \overline{U}_{i,j}^1$ of $\gamma$ connecting $M\setminus U_{i,j}^1$ and $\overline{U}_{i,j}^2$; see Fig.\ \ref{fig:R}. Then
\begin{eqnarray*}
\length(\wh F(\hat{\gamma}))& \stackrel{\text{\rm (f4)}}\approx &  \length (\wh F(\hat{\gamma}\cap \overline{\Omega}_{i,j}))\ +\ \length (\wh F(\hat{\gamma}\cap \overline{\Omega}_{i,j+1}))
\\
& &  +\ \length (F_0(\hat{\gamma}\cap M'))\\
& \geq &  \length (\pi_{i,j}(\wh F(\hat{\gamma}\cap \overline{\Omega}_{i,j})))\ +\ \length (\pi_{i,j+1}(\wh F(\hat{\gamma}\cap \overline{\Omega}_{i,j+1})))
\\
& & +\ \length(F_0(\hat{\gamma}\cap M'))\\
& \stackrel{\text{\rm (f3)}}\approx & \length (\pi_{i,j}(F_0(\hat{\gamma}\cap \overline{\Omega}_{i,j})))\ +\ \length (\pi_{i,j+1}(F_0(\hat{\gamma}\cap \overline{\Omega}_{i,j+1}))) 
\\
& &  +\ \length (F_0(\hat{\gamma}\cap M')) \stackrel{\text{\rm (d9)}}> \delta;
\end{eqnarray*} 
hence $\delta< \length(\wh F(\hat{\gamma}))\leq \length(\wh F(\gamma))$ and we are done.

It remains to consider the case when $\gamma\cap \bigcup_{(i,j)\in\I}\overline{U}_{i,j}^2=\emptyset$. Then there exist an index $(i,j)\in\I$ and a subarc $\hat{\gamma}\subset A_{i,j}$ of $\gamma$ connecting a point $p_0\in A_{i,j}\setminus \Omega_{i,j}$ to a point $p_1\in \beta_{i,j}^1\subset A_{i,j}$; see Fig.\ \ref{fig:R}. In view of {\rm (f4)} and {\rm (f3)} we have that 
\begin{equation}
\label{eq:Fp0}
\text{$\wh F(p_0)\approx F_0(p_0)$\quad and\quad $\pi_{i,j}(\wh F(p_1))\approx \pi_{i,j}(F_0(p_1))$.}
\end{equation}
On the other hand, \eqref{eq:Aij}  gives that $\{F_0(p_0),F_0(p_1)\}\subset\Uscr_{i,j}$, and {\rm (f2)} that $\wh F(p_1)\in \b(\hat{s})\setminus \overline{\b}(\hat{s}-\hat{\epsilon})$. Therefore, \eqref{eq:Pytha} and \eqref{eq:Fp0} ensure that $\delta<\dist(\wh F(p_0),\wh F(p_1))\leq \length(\wh F(\hat{\gamma})) \leq \length(\wh F(\gamma))$ and we are done.
\end{proof}


\section{Proper null curves in $\c^3$ with a bounded coordinate function}
\label{sec:boundedNC}

This section is devoted to the proof of Theorem \ref{th:boundedcoordinate} in the following more precise form.

\begin{theorem}\label{th:main2}
Let $ M$ be a bordered Riemann surface. Every null holomorphic immersion $F=(F_1,F_2,F_3)\in\IA( M)$ can be approximated, uniformly on compacta in $ M$, by proper 
null holomorphic embeddings  $G=(G_1,G_2,G_3)\colon  M\to \c^3$ such that $|G_3|<\eta$ on $M$ for any given constant $\eta>\|F_3\|_{0,\overline  M}$. 
\end{theorem}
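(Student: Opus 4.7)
The plan is to run a recursive approximation scheme in the spirit of Lemma \ref{lem:main}, but replacing the completeness-and-properness-into-a-ball goal with the new targets of (i) a uniform sup-norm bound $|G_3| < \eta$ on the third coordinate, and (ii) properness of the first two coordinates $(G_1,G_2)\colon M\to\c^2$ (which, together with (i), yields properness of $G\colon M\to\c^3$). The pivotal observation is that the null quadric meets the complex hyperplane $\{z_3=0\}$ in precisely the two null lines $\span\{V_1\}\cup\span\{V_2\}$, where $V_1=(1,\imath,0)$ and $V_2=(1,-\imath,0)$, and that these vectors are hermitian orthogonal and span $\{z_3=0\}$. If Theorem \ref{th:RH} is invoked with direction vectors taken from $\{V_1,V_2\}$, the principal term $\mathbf{B}_n(\zeta)\vartheta$ in the approximate solution (cf.\ \eqref{eq:G}) acts only on the first two coordinates, while the remainder $\mathbf{A}_n$ may be made arbitrarily $\Cscr^0(\overline M)$-small by \eqref{eq:estimateA}.

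The technical core is the following inductive lemma, parallel to Lemma \ref{lem:main}: given $F\in\IA(M)$, a Runge bordered subdomain $\Rcal\in\Bscr(M)$, a threshold $s>0$ with $|(F_1,F_2)|<s$ on $\overline M$, and tolerances $\hat\epsilon,\delta>0$, there exist $\wh F\in\IA(M)$ and $\Rcal'\in\Bscr(M)$ with $\Rcal\Subset\Rcal'$ such that $\|\wh F-F\|_{1,\overline\Rcal}<\hat\epsilon$, $\|\wh F_3-F_3\|_{0,\overline M}<\delta$, and $|(\wh F_1,\wh F_2)|>s+1$ on $\overline M\setminus\Rcal'$. The construction follows the three-step scheme of the proof of Lemma \ref{lem:main}: partition $bM$ into subarcs $C_{i,j}$ on which $(F_1,F_2)$ is approximately constant; use the boundary-point-exposing technique of \cite{FW0} together with the null Mergelyan theorem \cite{AF2,AL2} to replace $F$ by a suitably reshaped null immersion $F_0\in\IA(M)$; and apply Theorem \ref{th:RH} with direction vectors $\vartheta_{i,j}\in\{V_1,V_2\}$ and a size function $\mu$ large enough along the central part of each $C_{i,j}$ to push the boundary image into the region $\{|(z_1,z_2)|>s+1\}$. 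Because $V_1,V_2$ lie in $\{z_3=0\}$, the RH deformation affects $F_3$ only through the vanishing remainder $\mathbf{A}_n$, so the tolerance $\delta$ on $F_3$ is achieved by taking $n$ sufficiently large in \eqref{eq:estimateA}.

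With this lemma in hand, I would iterate. Fix an exhaustion $\Rcal_n\uparrow M$ by Runge bordered subdomains of the prescribed topological type, positive tolerances $\hat\epsilon_n$ summable so that $\sum_n 2\hat\epsilon_n$ is as small as desired, positive $\delta_n$ with $\sum_n\delta_n<\eta-\|F_3\|_{0,\overline M}$, and thresholds $s_n\uparrow+\infty$. Apply the lemma recursively to produce $F_n\in\IA(M)$ satisfying $\|F_n-F_{n-1}\|_{1,\overline{\Rcal}_{n-1}}<\hat\epsilon_n$, $\|F_{n,3}-F_{n-1,3}\|_{0,\overline M}<\delta_n$, and $|(F_{n,1},F_{n,2})|>s_n$ on $\overline M\setminus\Rcal_n$; at each step the general position argument of \cite[Theorem 2.4]{AF2} permits taking $F_n$ to be an embedding. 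By Cauchy estimates and summability, the sequence $F_n$ converges uniformly on compacta in $M$ to a null holomorphic embedding $G\colon M\to\c^3$; one has $|G_3|\le\|F_3\|_{0,\overline M}+\sum_n\delta_n<\eta$; and for every $t>0$, choosing $n_0$ with $s_{n_0}>t+1$ shows $(G_1,G_2)^{-1}(\{|w|\le t\})\subset\overline{\Rcal}_{n_0}$ is compact, so $(G_1,G_2)\colon M\to\c^2$ is proper, hence so is $G$.

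The main obstacle is preserving the control on $F_3$ through the RH step. The gluing of holomorphic sprays in the proof of Theorem \ref{th:RH} involves a period-killing parameter $t_0(n)\in B\subset\c^N$ which, a priori, could shift $F_3$ by an uncontrolled amount. This is handled by Lemma \ref{lem:RHfamily} and the estimate \eqref{eq:est-ab}: the gluing data can be taken of size $O(\delta)$ for any prescribed $\delta>0$, and the remainder $\mathbf{A}_{t,n}$ tends to zero uniformly on $B'\times\overline M$ as $n\to\infty$. Combined with the maximal-rank property of the period map at $t=0$ (so that $t_0(n)\to 0$ as the glued spray approaches the base spray), this yields the desired $\Cscr^0$-tolerance on $F_3$ at each inductive step, and the construction closes.
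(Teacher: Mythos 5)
Your proposal follows the same three-step zig-zag strategy as the paper's proof (splitting $bM$ into arcs, exposing boundary points, then pushing with Theorem~\ref{th:RH} along the null directions $V_1,V_2$ orthogonal to $(0,0,1)$), and your key observation---that these directions leave $F_3$ untouched up to the vanishing remainder ${\bf A}_n$---is precisely the one the paper exploits. However, your inductive lemma and the resulting iteration omit one condition that is needed to close the properness argument at the end.

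Your lemma only provides a lower bound $|(\wh F_1,\wh F_2)|>s+1$ on the \emph{new} annulus $\overline M\setminus\Rcal'$, and accordingly your iteration only records $|(F_{n,1},F_{n,2})|>s_n$ on $\overline M\setminus\Rcal_n$. To conclude that $(G_1,G_2)^{-1}(\{|w|\le t\})\subset\overline\Rcal_{n_0}$ you must show $|(G_1,G_2)(p)|>t$ for every $p\in M\setminus\Rcal_{n_0}$. Fix such $p$ and the $m>n_0$ with $p\in\Rcal_m\setminus\Rcal_{m-1}$. The telescoping estimates control $\|G-F_m\|_{0,\overline\Rcal_m}$, so what you need is a lower bound on $|(F_{m,1},F_{m,2})(p)|$; but your condition for $n=m$ is a statement \emph{outside} $\Rcal_m$ and says nothing at $p$, while your condition for $n=m-1$ concerns $F_{m-1}$, whose value at $p$ cannot be transferred to $F_m$ or to $G$ because $\|F_m-F_{m-1}\|_{1,\overline\Rcal_{m-1}}<\hat\epsilon_m$ is silent on $\overline\Rcal_m\setminus\Rcal_{m-1}$, where the Riemann--Hilbert perturbation is by design large. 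The missing ingredient is the paper's condition (L3) of Lemma~\ref{lem:main2} and the corresponding iteration condition (e$_n$): the new map must still satisfy the \emph{old} threshold on the \emph{old} annulus, i.e.\ $\mgot(\wh F)>s$ on $\overline M\setminus\Rcal$, giving $\mgot(F^m)>s_{m-1}$ on $\overline M\setminus\Rcal_{m-1}\supset\overline\Rcal_m\setminus\Rcal_{m-1}$ and hence $\mgot(G)(p)>s_{m-1}-\epsilon>t$. Verifying (L3) is not automatic: it requires that on each boundary arc $C_{i,j}$ the RH deformation push only in the direction $V_k$ orthogonal to the $V_{k'}$ whose projection $|\langle F,V_{k'}\rangle|$ is already large there, so the existing lower bound is preserved rather than destroyed; this is why the paper tracks $\mgot(f)=\max\{|\langle f,V_1\rangle|,|\langle f,V_2\rangle|\}$ rather than $|(f_1,f_2)|$ and splits $\I=\I_1\cup\I_2$. (A minor slip: your lemma hypothesis ``$|(F_1,F_2)|<s$ on $\overline M$'' has the inequality reversed; it should read $\mgot(F)>s$ on $\overline M\setminus\Rcal$, matching the paper's hypothesis \eqref{eq:lemma2}.)
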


The proof relies on the approximation result furnished by Lemma \ref{lem:main2} below. 

We fix the following notation in the remainder of this section. Let $\{V_1,V_2\}$ be an orthonormal basis of $\c^2\times\{0\}\subset\c^3$ consisting of null vectors.
(We can take for instance $V_1=\frac{1}{\sqrt2} (1,\imath,0)$ and $V_2=\frac{1}{\sqrt2}(1,-\imath,0)$.) 
Given a set $K$ and a map $f\colon K\to\c^3$, set 
\begin{equation}
\label{eq:m(f)}
  \mgot(f)=\max\{|\langle f,V_1\rangle|,|\langle f,V_2\rangle|\}\colon K\to\r_+.
\end{equation}

%
%
%
%
\begin{lemma}\label{lem:main2}
Let $M$ be a bordered Riemann surface, let $\Rcal\in\Bscr( M)$, let $F=(F_1,F_2,F_3)\in \IA( M)$, let $s>0$ be a positive number, and assume that
\begin{equation}
\label{eq:lemma2}
\mgot(F)>s\quad \text{on $\overline{ M}\setminus  \Rcal$}. 
\end{equation}
Then, for any $\epsilon>0$ and $\hat s>s$, there exists a null immersion $\wh F=(\wh F_1,\wh F_2,\wh F_3) \in \IA( M)$ 
enjoying the following properties: 
\begin{enumerate}[\rm (L1)]
\item $\|\wh F -F\|_{1,\overline{\Rcal}}<\epsilon$.
\item $\mgot(\wh F)>\hat s$ on $bM$.
\item $\mgot(\wh F)>s$ on $\overline{ M}\setminus  \Rcal$.
\item $\|\wh F_3\|_{0,\overline M}<\|F_3\|_{0,\overline M}+\epsilon$.
\end{enumerate}
\end{lemma}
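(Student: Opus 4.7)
The plan is to apply Theorem \ref{th:RH} twice in succession, pushing $F$ near $bM$ alternately in the two orthonormal null directions $V_1, V_2 \in \c^2 \times \{0\}$. Two structural features drive the construction: \emph{orthogonality} $V_1 \perp V_2$, so that a push in direction $V_i$ leaves $\langle F, V_{3-i}\rangle$ unchanged by the identity $\langle \cdot + c V_i, V_{3-i}\rangle = \langle \cdot, V_{3-i}\rangle$; and the \emph{vanishing third component} of $V_1, V_2$, so that such pushes leave $F_3$ unchanged up to the Riemann--Hilbert error term. Together these make (L4) essentially automatic and supply the arithmetic that keeps $\mgot$ bounded below in the annulus throughout both steps.

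By compactness of $bM$ and the hypothesis $\mgot(F) > s$, fix $s' \in (s,\hat s)$ with $\mgot(F) > s'$ on a relative open neighborhood of $bM$ in $\overline M \setminus \Rcal$; then at every $p \in bM$ one has $|\langle F(p), V_{i(p)}\rangle| > s'$ for some $i(p) \in \{1,2\}$. By continuity and compactness I cover $bM$ by two finite families of pairwise disjoint compact arcs $\{I_j^{(1)}\}_{j=1}^{k_1}$ and $\{I_j^{(2)}\}_{j=1}^{k_2}$, none a full boundary component, arranged so that $\bigcup_{\nu, j} I_j^{(\nu)} = bM$, every endpoint of every $I_j^{(\nu)}$ lies in the relative interior of some $I_{j'}^{(3-\nu)}$, and the ``central sub-arcs'' (where the size function will be taken large) of both families together still cover $bM$. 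On each $I_j^{(\nu)}$ I select an index $\gamma_j^{(\nu)} \in \{1,2\}$ with $|\langle F, V_{\gamma_j^{(\nu)}}\rangle| > s'$ on a relative neighborhood of $I_j^{(\nu)}$ in $\overline M$, and take the \emph{push direction} on $I_j^{(\nu)}$ to be the other one: $V_{\delta_j^{(\nu)}}$ with $\delta_j^{(\nu)} := 3 - \gamma_j^{(\nu)}$.

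I apply Theorem \ref{th:RH} to $F$ with arcs $\{I_j^{(1)}\}$, direction vectors $V_{\delta_j^{(1)}}$, and a continuous size function $\mu^{(1)} \colon bM \to \r_+$ supported on $\bigcup I_j^{(1)}$ chosen so that $\mu^{(1)}(p) > \hat s + |\langle F(p), V_{\delta_j^{(1)}}\rangle| + \epsilon$ on the central sub-arc of each $I_j^{(1)}$. By the reverse triangle inequality applied to Theorem \ref{th:RH}(i), the resulting $F^{(1)} \in \IA(M)$ satisfies $|\langle F^{(1)}, V_{\delta_j^{(1)}}\rangle| > \hat s$ on those central sub-arcs; on the annular neighborhood $\Omega^{(1)}$ of $\bigcup I_j^{(1)}$ furnished by Theorem \ref{th:RH}, orthogonality combined with Theorem \ref{th:RH}(ii)--(iii) gives $|\langle F^{(1)}, V_{\gamma_j^{(1)}}\rangle| \approx |\langle F, V_{\gamma_j^{(1)}}\rangle| > s'$, so $\mgot(F^{(1)}) > s$ on $\Omega^{(1)}$. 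I then apply Theorem \ref{th:RH} analogously to $F^{(1)}$ with arcs $\{I_j^{(2)}\}$ and directions $V_{\delta_j^{(2)}}$, producing $\wh F$. On any overlap $I_j^{(1)} \cap I_{j'}^{(2)}$, the lower bound required on the unpushed direction of step 2 is supplied either by the preserved bound $|\langle \cdot, V_{\gamma_{j'}^{(2)}}\rangle| > s'$ (when $\gamma_{j'}^{(2)} = \gamma_j^{(1)}$) or by the new bound $|\langle \cdot, V_{\delta_j^{(1)}}\rangle| > \hat s$ from step 1 (when $\gamma_{j'}^{(2)} = \delta_j^{(1)}$), so the same analysis gives $\mgot(\wh F) > s$ on $\Omega^{(2)}$ and $|\langle \wh F, V_{\delta_j^{(2)}}\rangle| > \hat s$ on the central sub-arcs of $I_j^{(2)}$.

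Now every $p \in bM$ lies in the central sub-arc of some $I_j^{(\nu)}$; step $\nu$ gives $|\langle F^{(\nu)}, V_{\delta_j^{(\nu)}}\rangle|(p) > \hat s$, and the subsequent step (when $\nu = 1$) either does not touch $p$ (so the bound is preserved), or pushes in a direction orthogonal to $V_{\delta_j^{(1)}}$ (preserved by orthogonality), or pushes again in the same direction with $\mu^{(2)}$ large enough that a second reverse-triangle estimate keeps $|\langle \wh F, V_{\delta_j^{(1)}}\rangle|(p) > \hat s$. This proves (L2). Property (L1) is immediate from Theorem \ref{th:RH}(iii) with $\Omega^{(1)}, \Omega^{(2)}$ chosen disjoint from $\overline\Rcal$; (L3) follows from the $\mgot > s$ bound on $\Omega^{(1)} \cup \Omega^{(2)}$ together with $\wh F \approx F$ elsewhere on $\overline M \setminus \Rcal$; (L4) follows because each push direction has zero third component, so the main deformation term ${\bf B}_n \vartheta$ in Lemma \ref{lem:RH} does not affect $F_3$, while the error ${\bf A}_n$ can be made uniformly smaller than $\epsilon$ by choosing $n$ large. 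The main technical difficulty is the combinatorial orchestration of the two arc covers with interlocking endpoints and the verification that the lower bound $\mgot > s$ is cooperatively maintained by both deformations in the annular region; this hinges crucially on $V_1 \perp V_2$.
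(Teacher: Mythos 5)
Your proposal replaces the paper's Step 2 (exposing boundary points) with a second application of Theorem \ref{th:RH} on an interlocking family of arcs, and this substitution does not go through: there is a gap in the verification of (L2).

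The issue is the transition region of the size function $\mu^{(2)}$. On each $I_{j'}^{(2)}$, the continuous function $\mu^{(2)}$ must vanish at the two endpoints (since it is supported on $\bigcup_{j'} I_{j'}^{(2)}$), so between a central sub-arc $c_{j'}^{(2)}$ and an endpoint of $I_{j'}^{(2)}$ it necessarily passes through every value from ``large'' down to $0$. A point $p$ in that zone lies outside every $c_{j''}^{(2)}$ (the $I_{j''}^{(2)}$ are pairwise disjoint), so by your covering hypothesis $p$ lies in some $c_j^{(1)}$ and the $\hat s$-bound at $p$ must come from step 1. Your case analysis for (L2) only covers ``step 2 does not touch $p$'', ``step 2 pushes orthogonally'', or ``step 2 pushes in the same direction with $\mu^{(2)}$ large enough''; the fourth case --- same push direction $V_{\delta_{j'}^{(2)}}=V_{\delta_j^{(1)}}$ with $\mu^{(2)}(p)$ at an intermediate value comparable to $|\langle F^{(1)}(p),V_{\delta_j^{(1)}}\rangle|$ --- is not addressed, and it is exactly there that the step~1 bound $|\langle F^{(1)}(p),V_{\delta_j^{(1)}}\rangle|>\hat s$ can be cancelled. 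The orthogonal component only preserves the original $s'<\hat s$ bound, so $\mgot(\wh F)(p)>\hat s$ fails. You cannot in general avoid this case by choosing $\gamma_{j'}^{(2)}=\delta_j^{(1)}$: that would require $|\langle F,V_{\delta_j^{(1)}}\rangle|>s'$ on $I_{j'}^{(2)}\supset I_j^{(1)}\cap I_{j'}^{(2)}$, but $\delta_j^{(1)}$ was precisely the direction with no lower bound on $I_j^{(1)}$.

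The paper avoids this by not using a second Riemann--Hilbert deformation near the arc endpoints at all. Instead it first stretches $F$ near the corner points $p_{i,j}$ by the technique of exposing boundary points (attaching the arcs $\lambda_{i,j}$ and applying \cite[Theorem 2.3]{FW0}), so that after that step $|\langle F^0,V_k\rangle|>\hat s$ holds on $\overline{C_{i,j}\setminus\alpha_{i,j}}$ for $(i,j)\in\I_k$ --- i.e.\ exactly where $\mu$ will later taper to zero --- in the direction $V_k$ which the subsequent Riemann--Hilbert push (in the orthogonal direction $V_{3-k}$) cannot disturb. Only one application of Theorem \ref{th:RH} is then needed, and the two contributions never compete. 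If you want to keep a ``two-RH-step'' structure you would have to pre-establish a $\hat s$-bound in a push-invariant direction near each arc endpoint before the taper region is hit, which is precisely what exposing accomplishes.

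Your analysis of (L1), (L3), and especially (L4) --- tracking the third coordinate via the vanishing third components of $V_1,V_2$ in the deformation term ${\bf B}_n\vartheta$ and controlling the remainder ${\bf A}_n$ --- is in line with the paper's arguments.
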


The main novelty of Lemma \ref{lem:main2} with respect to previous related constructions of minimal surfaces in $\r^3$ and null curves in $\c^3$ is the control on the third coordinate function provided by property {\rm (L4)}; cf.\ \cite[Lemma 5.1]{AL1} and \cite[Lemma 8.2]{AF2}.


\begin{proof}[Proof of Lemma \ref{lem:main2}]
Assume without loss of generality that $M$ is a bordered domain in an open Riemann surface $\widehat{ M}$ such that $M \in \Bscr(\widehat{ M})$ and, by the Mergelyan theorem for null curves \cite{AL2,AF2}, that $F\in\IA( M)$ extends to a null holomorphic immersion $F\in\IA(\wh  M)$.

The proof is accomplished in three different steps that follow the spirit of those in the proof of Lemma \ref{lem:main}. Again, the main novelties with respect to previous related constructions arise in the last step where the solutions to the Riemann-Hilbert problem for null curves (Theorem \ref{th:RH}) are used.

\noindent{\em Step 1: Splitting $bM$.} Denote by $ C_1,\ldots, C_\igot$ the connected boundary curves of $\overline{ M}$, so $bM=\bigcup_{i=1}^\igot  C_i$. For every $i = 1,\ldots,\igot$ choose a small annular neighborhood $A_i\subset\overline M$ of $ C_i$ and a smooth retraction $\rho_i\colon A_i\to C_i$ such that $\overline\Rcal \subset K:= M\setminus \bigcup_{i=1}^\igot A_i$.

In view of \eqref{eq:lemma2}, we can split $bM$ into a finite family of compact arcs $C_{i,j}$, $(i,j)\in\I:=\{1,\ldots,\igot\}\times\z_\jgot$, $\jgot\in\n$, so that the following properties hold for every $i\in\{1,\ldots,\igot\}$:
\begin{enumerate}[\rm ({a}1)]
\item $\bigcup_{j=1}^\jgot C_{i,j}=C_i$.
\item $C_{i,j}$ and $C_{i,j+1}$ have a common endpoint, $p_{i,j}$, and are otherwise disjoint.
\item Either $|\langle F,V_1\rangle|>s$ on $C_{i,j}$ or $|\langle F,V_2\rangle|>s$ on $C_{i,j}$ for every $j \in \z_\jgot$.
\end{enumerate}

Set $\I_1=\{(i,j)\in\I\colon |\langle F,V_1\rangle|>s$ on $C_{i,j}\}$ and $\I_2=\I\setminus \I_1$, so we have $|\langle F,V_2\rangle|>s$ on $C_{i,j}$ for all $(i,j)\in\I_2$ in view of {\rm (a3)}.

\noindent{\em Step 2: Stretching from the points $p_{i,j}$}. 
For each $(i,j)\in\I$ choose a compact smooth embedded arc $\lambda_{i,j}\subset\c^3$ enjoying the following conditions:
\begin{enumerate}[\rm ({b}1)]
\item $\lambda_{i,j}$ is attached to $F(\overline M)$ at $F(p_{i,j})$ and it is otherwise disjoint from $F(\overline M)$.
\item $|\langle z,V_k\rangle|>s$ for all $z\in \lambda_{i,j-1}\cup\lambda_{i,j}$ for all $(i,j)\in \I_k$.
\item $|\langle v_{i,l},V_k\rangle|>\hat s$ for all $l\in\{j-1,j\}$ and all $(i,j)\in \I_k$, where $v_{i,l}$ denotes the endpoint of $\lambda_{i,l}$ different from $F(p_{i,l})$.
\item $|\langle z, (0,0,1)\rangle|<\|F_3\|_{0,\overline M}+\epsilon/2$ for all $z\in\lambda_{i,j}$.
\end{enumerate}
Observe that {\rm (b1)} and {\rm (b2)} are compatible thanks to {\rm (a3)}. For {\rm (b4)} take into account that the third component of $V_1$ and $V_2$ equals $0$. 

Up to slightly modifying the arcs $\lambda_{i,j}$, we can use the method of exposing boundary points (see \cite[Theorem 2.3]{FW0} and also \cite[Theorem 8.8.1]{F2011}) in order to approximate $F$ in the $\Cscr^1$ topology outside a small open neighborhood of $\{p_{i,j}\colon (i,j)\in\I\}$ by a null curve $F^0\in\IA(M)$ containing these arcs and mapping the point $p_{i,j}$ to $v_{i,j}$ for all $(i,j)\in \I$. Furthermore, $F^0$ can be taken such that $F^0(\overline M)$ is arbitrarily close to $F(\overline M)\cup \bigcup_{(i,j)\in\I}\lambda_{i,j}$. More precisely, arguing as in Step 2 in the proof of Lemma \ref{lem:main}, we find a null curve $F^0=(F^0_1,F^0_2,F^0_3)\in\IA(M)$ enjoying the following properties:
\begin{enumerate}[\rm ({c}1)]
\item $\|F^0-F\|_{1,K}<\epsilon/2$.
\item $|\langle F^0,V_k\rangle|>s$ on $C_{i,j}$ for all $(i,j)\in\I_k$; see {\rm (b2)}.
\item $|\langle F^0,V_k\rangle|>\hat s$ on $\{p_{i,j-1},p_{i,j}\}$ for all $(i,j)\in \I_k$; see {\rm (b3)}.
\item $\|F^0_3\|_{0,\overline M}<\|F_3\|_{0,\overline M}+\epsilon/2$; see {\rm (b4)}.
\item $\mgot(F^0)>s$ on $\overline M\setminus \Rcal$; see {\rm (b2)} and \eqref{eq:lemma2}.
\end{enumerate}

\noindent{\em Step 3: Stretching from the arcs $C_{i,j}$.}
In view of {\rm (c3)}, the continuity of $F^0$ gives compact subarcs $\alpha_{i,j}$ of $C_{i,j}\setminus \{p_{i,j-1},p_{i,j}\}$, $(i,j)\in \I$, such that
\begin{equation}\label{eq:c3}
|\langle F^0,V_k\rangle|>\hat s\quad \text{on $\overline{C_{i,j}\setminus \alpha_{i,j}}$ for all $(i,j)\in\I_k$, $k=1,2$.}
\end{equation}
(See Fig.\ \ref{fig:lemma52}.)
\begin{figure}[ht]
    \begin{center}
    \resizebox{0.85 \textwidth}{!}{\includegraphics{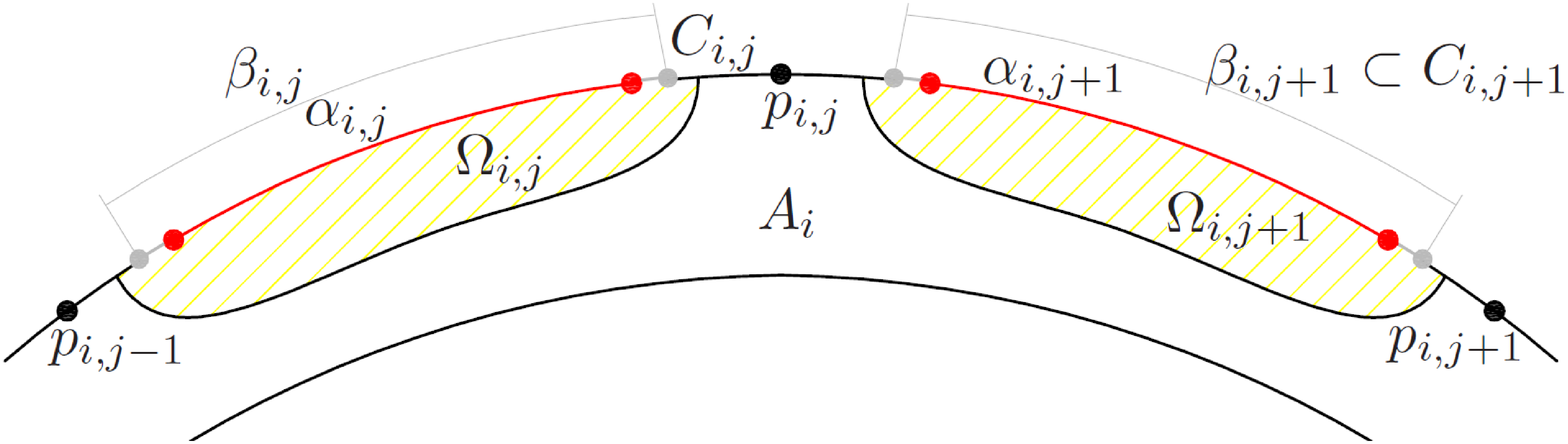}}
    \end{center}
        \vspace{-0.25cm}
\caption{The sets in $\overline M\setminus \overline{\Rcal}$.}\label{fig:lemma52}
\end{figure}
From \eqref{eq:c3}, we can easily find compact subarcs $\beta_{i,j}$ of $C_{i,j}\setminus \{p_{i,j-1},p_{i,j}\}$, containing $\alpha_{i,j}$ in its relative interior, $(i,j)\in \I$, and a continuous function $\mu\colon bM\to\r_+$ such that 
\begin{equation}\label{eq:mu1}
\mu=0\quad \text{on $bM\setminus \bigcup_{(i,j)\in\I} \beta_{i,j}$},
\end{equation}
and the map $\varkappa\colon bM\times\overline\d\to\c^3$ given by
\begin{equation}\label{eq:varkappa1}
\varkappa(p,\xi)=\left\{
\begin{array}{ll} 
F^0(p); & p\in bM\setminus \bigcup_{(i,j)\in\I} \beta_{i,j} \\
F^0(p)+\mu(p)\xi V_k; & p\in \beta_{i,j},\; (i,j)\in\I\setminus\I_k,\; k=1,2,\\
\end{array}\right.
\end{equation}
enjoys the following properties:
\begin{enumerate}[\rm ({d}1)]
\item $|\langle \varkappa(p,\xi), V_k\rangle|>\hat s$ for all $p\in \alpha_{i,j}$ and $\xi\in b\d$, $(i,j)\in\I\setminus\I_k$, $k=1,2$.
\item $|\langle \varkappa(p,\xi), V_k\rangle|>\hat s$ for all $p\in \overline{C_{i,j}\setminus\alpha_{i,j}}$ and $\xi\in \overline\d$, $(i,j)\in\I_k$, $k=1,2$; see \eqref{eq:c3}, \eqref{eq:varkappa1}, and recall that $\langle V_1,V_2\rangle=0$.
\item $|\langle \varkappa(p,\xi), V_k\rangle|>s$ for all $p\in C_{i,j}$ and $\xi\in \overline\d$, $(i,j)\in\I_k$, $k=1,2$; see {\rm (c2)}, \eqref{eq:varkappa1}, and take into account that $\langle V_1,V_2\rangle=0$.
\end{enumerate}
Notice that \eqref{eq:mu1} and the continuity of $\mu$ imply the one of $\varkappa$; observe that the arcs $\beta_{i,j}$, $(i,j)\in \I$, are pairwise disjoint.

We can now use Theorem \ref{th:RH} to obtain small open neighborhoods $\Omega_{i,j}\subset A_i\subset\overline M\setminus K$ of the arcs $\beta_{i,j}$, $(i,j)\in\I$, and a holomorphic null immersion $\wh F=(\wh F_1,\wh F_2,\wh F_3)\in\IA(M)$ satisfying the following conditions (see Fig.\ \ref{fig:lemma52}):
\begin{enumerate}[\rm ({e}1)]
\item $\overline\Omega_{i,j}\cap \overline \Omega_{i,l}=\emptyset$ for all $l\neq j$, and $\overline\Omega_{i,j}\cap bM$ is a compact arc in $C_{i,j}\setminus\{p_{i,j-1},p_{i,j}\}$ containing $\beta_{i,j}$ in its relative interior.
\item $|\langle \wh F, V_k\rangle|>\hat s$ on $\alpha_{i,j}$, $(i,j)\in\I\setminus\I_k$, $k=1,2$; see {\rm (d1)} and Theorem \ref{th:RH}-{\it i)}.
\item $|\langle \wh F, V_k\rangle|>\hat s$ on $\overline{C_{i,j}\setminus\alpha_{i,j}}$, $(i,j)\in\I_k$, $k=1,2$; see {\rm (d2)} and Theorem \ref{th:RH}-{\it ii)}.
\item $\|\wh F-F^0\|_{1,\overline M\setminus \bigcup_{(i,j)\in \I}\Omega_{i,j}} <\epsilon/2$; see Theorem \ref{th:RH}-{\it iii)}.
\item $\|\wh F_3\|_{0,\overline M}<\|F^0_3\|_{0,\overline M}+\epsilon/2$; see \eqref{eq:varkappa1}, Theorem \ref{th:RH}-{\it ii)}, {\it iii)},  and recall that $V_1$ and $V_2$ are orthogonal to $(0,0,1)$.
\item $\mgot(\wh F)>s$ on $\overline M\setminus\Rcal$; see {\rm (d2)}, {\rm (d3)}, {\rm (c5)}, Theorem \ref{th:RH}-{\it i)}, {\it ii)}, and {\it iii)}.
\end{enumerate} 

The immersion $\wh F\in\IA(M)$ meets all the requirements of the lemma. Indeed, {\rm (L1)} is implied by {\rm (c1)} and {\rm (e4)}; {\rm (L2)} is ensured by {\rm (e2)} and {\rm (e3)}; {\rm (L3)} agrees with {\rm (e6)}; and {\rm (L4)} is given by {\rm (c4)} and {\rm (e5)}. This concludes the proof of Lemma \ref{lem:main2}.
\end{proof}

\begin{remark}
A reader familiar with the standard constructions of proper holomorphic maps (see for instance the
introduction and references in \cite{DF2007}, or \cite[\S 8.6]{F2011}) might ask why are we using the additional device of exposing boundary points in the proof of Lemma \ref{lem:main2}. 
Indeed, if one could prove Theorem \ref{th:RH} with the size function
$\mu\colon bM\to \r_+$ supported on the entire boundary of $M$ (and not only on arcs), then such a proof would be possible by following the zig-zag procedure outlined in the introduction for the case when $M$ is the disc.
However, we find this a minor issue, in particular since the exposing of points technique 
is an integral part of our analysis already in \S\ref{sec:th} above.
\end{remark}

\begin{proof}[Proof of Theorem \ref{th:main2}] 
Let $K\subset M$ be a compact subset. Pick a constant $\varepsilon>0$ such that 
\begin{equation}
\label{eq:F3eta}
\|F_3\|_{0,\overline M}+\varepsilon<\eta.
\end{equation}

Set $F^0=(F^0_1,F^0_2,F^0_3):=F$ and $\varepsilon_0:=\varepsilon$. By the general position argument we may assume that $\mgot(F^0)=\max\{|\langle F^0,V_1\rangle|\,,\, |\langle F^0,V_2\rangle|\}$ does not vanish anywhere on $b  M$; hence there exists a constant $s_0>0$ and a bordered domain $\Rcal_0\in\Bscr( M)$ containing $K$  such that 
\begin{equation}
\label{eq:n=0}
\mgot(F^0)>s_0\quad \text{on $\overline M\setminus\Rcal_0$}.
\end{equation}
Set $s_n:=s_0+n$ for $n\in\n$. 
We shall inductively construct sequences of bordered domains $\Rcal_n\in\Bscr( M)$, null holo\-morphic embeddings $F^n=(F^n_1,F^n_2,F^n_3)\in\IA( M)$, and constants $\varepsilon_n>0$, $n\in\n$, satisfying the following properties:
\begin{enumerate}[\rm (a$_n$)]
\item $\Rcal_n\Supset \Rcal_{n-1}$.
\item $\displaystyle 0<\varepsilon_n <\varepsilon_{n-1}/2$ and every holomorphic map $G\colon \overline  M \to\c^3$ satisfying $\|G-F^{n-1}\|_{1,\overline{\Rcal}_{n-1}}<2\varepsilon_n$ is an embedding on $\overline{\Rcal}_{n-1}$.
\item $\|F^n-F^{n-1}\|_{1,\overline{\Rcal}_{n-1}}<\varepsilon_n$.
\item $\mgot(F^n)>s_n$ on $\overline M\setminus\Rcal_n$.
\item $\mgot(F^n)>s_{n-1}$ on $\overline M\setminus\Rcal_{n-1}$.
\item $\|F^n_3\|_{0,\overline M}<\|F^{n-1}_3\|_{0,\overline M}+\varepsilon_n$.
\end{enumerate}
We will also ensure that 
\begin{equation}\label{eq:cupM}
	 M=\bigcup_{n\in\n}\Rcal_n.
\end{equation}

For the basis of the induction, take a number $0<\varepsilon_1<\varepsilon/2$ small enough so that {\rm (b$_1$)} holds; recall that $F^0\in\IA( M)$ is an embedding. Condition \eqref{eq:n=0} shows that we can apply Lemma \ref{lem:main2} to the data
\[
(M \,,\, \Rcal \,,\, F \,,\, s  \,,\, \epsilon \,,\, \hat s)= (M \,,\, \Rcal_0 \,,\, F^0 \,,\, s_0  \,,\, \varepsilon_1 \,,\, s_1),
\]
obtaining a null holomorphic immersion $F^1\in\IA( M)$, which can be assumed to be an embedding by general position argument \cite[Theorem 2.5]{AF2}. Conditions {\rm (c$_1$)}, {\rm (e$_1$)}, and {\rm (f$_1$)} trivially follow, whereas {\rm (a$_1$)} and {\rm (d$_1$)} are ensured by Lemma \ref{lem:main2}-{\rm (L2)} provided that the bordered domain $\Rcal_1\in\Bscr( M)$ is chosen large enough.

For the inductive step, assume that for some $n>1$ we already have a triple $(\Rcal_{j},F^{j},\varepsilon_{j})$ enjoying properties {\rm (a$_j$)}--{\rm (f$_j$)} for all $j\in\{1,\ldots,n-1\}$. Take any $\varepsilon_n>0$ satisfying {\rm (b$_n$)}; again the existence of such a number is ensured by the fact that $F^{n-1}\in\IA( M)$ is an embedding. In view of {\rm (d$_{n-1}$)} we can apply Lemma \ref{lem:main2} to the data
\[
	(M \,,\, \Rcal \,,\, F \,,\, s  \,,\, \epsilon \,,\, \hat s)= (M \,,\, \Rcal_{n-1} \,,\, F^{n-1} \,,\, s_{n-1}  \,,\, \varepsilon_n \,,\, s_n),
\]
obtaining $F^n\in\IA( M)$ which can be assumed an embedding \cite{AF2} and which, together with $\varepsilon_n$ and a sufficiently large domain $\Rcal_n \in \Bscr( M)$, meets all requirements {\rm (a$_n$)}--{\rm (f$_n$)}. 

Finally, in order to ensure \eqref{eq:cupM}, we simply choose the domain $\Rcal_n$ large enough in each step of the inductive process. This concludes the construction of the sequence $\{(\Rcal_n,F^n,\varepsilon_n)\}_{n\in\n}$.

Since $F^n\in\IA( M)$ for all $n\in\n$, properties {\rm (c$_n$)}, {\rm (b$_n$)}, and (\ref{eq:cupM}) show  that the sequence $\{F^n\in\IA( M)\}_{n\in\n}$ converges uniformly on compacta in $ M$ to a holomorphic map $G =(G_1,G_2,G_3)\colon  M\to\c^3$ whose derivative, $G'$, with respect to any local holomorphic coordinate on $ M$ assumes values in $\Agot$. 

Let us show that $G$ satisfies Theorem \ref{th:main2}. From {\rm (b$_n$)} and {\rm (c$_n$)} it follows that
\begin{equation}\label{eq:hatFclose2}
\|G-F^j\|_{1,\overline \Rcal_j}<2\varepsilon_{j+1}<\varepsilon_j\leq \varepsilon/2^j\quad \forall j\in\n\cup\{0\}.
\end{equation}
This and {\rm (c$_n$)} imply that $G\colon M\to\c^3$ is a null holomorphic embedding which is $\varepsilon$-close to $F^0=F$ in the $\Cscr^1$ topology on the domain $\Rcal_0\supset K$. 

Next we check that $G\colon M\to\c^3$ is a proper map. Indeed, by {\rm (e$_n$)} and \eqref{eq:hatFclose2}, we have
\[
\mgot(G)> s_{n-1}-\varepsilon/2^n\quad \text{on  $\overline \Rcal_n\setminus \Rcal_{n-1}$,  for all $n\in\n$}
\]
(see \eqref{eq:m(f)}). Together with \eqref{eq:cupM} and the fact that $\{s_n\}_{n\in\n}\to+\infty$, it follows that the map  $\mgot(G)\colon  M\to\r$ is proper; hence so is $G$; cf.\ \eqref{eq:m(f)}.

Finally, {\rm (b$_n$)}, {\rm (f$_n$)}, and \eqref{eq:F3eta} ensure that $|G_3|<\eta$ on $M$, thus concluding the proof.
\end{proof}


Lemma \ref{lem:main2} can be also combined with the Mergelyan theorem for null curves in $\c^3$ \cite{AF2,AL1} in order to prove the first assertion in Theorem \ref{th:giventopology}. Afterward, the second and third assertions there follow as explained in the introduction. We can actually prove the following more precise result in the line of the first part of Theorem \ref{th:giventopology}.

\begin{theorem}\label{th:topology}
Assume that $N$ is an orientable noncompact smooth real surface without boundary, $K$ is a compact subset of $N$ such that $N\setminus K$ does not have any relatively compact connected components in $N$, and $J_K$ is a complex structure on an open neighborhood $U\subset N$ of $K$.
Let $F=(F_1,F_2,F_3)\in\IA(U)$ be a null holomorphic immersion. Then for any $\epsilon>0$ there exist a complex structure $J$ on $N$ and a proper null holomorphic embedding $G=(G_1,G_2,G_3)$ of the Riemann surface $\Ncal:=(N,J)$ to $\c^3$ such that:
\begin{enumerate}[\it i)]
\item $J=J_K$ on a neighborhood of $K$, 
\item $\|G-F\|_{1,K}<\epsilon$, and 
\item $|G_3|<\|F_3\|_{0,K}+\epsilon$ on $N$.
\end{enumerate}
\end{theorem}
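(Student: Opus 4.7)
The plan is to combine Lemma \ref{lem:main2} with the Mergelyan theorem for null holomorphic curves on admissible sets \cite{AL2,AF2} in a recursive scheme that builds the complex structure $J$ and the null embedding $G$ simultaneously. First I would fix a normal exhaustion $N_0 \subset N_1 \subset N_2 \subset \cdots$ of $N$ by compact smoothly bounded subsurfaces with $K \subset \mathring{N}_0$, $N_0 \subset U$, $N_n \Subset \mathring{N}_{n+1}$, $\bigcup_n N_n = N$, and $N \setminus N_n$ having no relatively compact connected components. By refining, each step $N_n \hookrightarrow N_{n+1}$ can be arranged to be either (a) topologically trivial (a disjoint union of annular collars) or (b) obtained by attaching finitely many smooth arcs to $bN_n$ and thickening. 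Set $\epsilon_n := \epsilon/2^{n+2}$, $s_n := n$, $J_0 := J_K|_{N_0}$, and take $F_0$ to be $F$ restricted to a neighborhood of $N_0$; after a tiny general-position perturbation \cite[Theorem 2.5]{AF2}, and possibly shrinking $N_0$, we may assume $F_0$ is an embedding with $\mgot(F_0) > s_0$ on $bN_0$.

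Suppose inductively that we have a complex structure $J_n$ on a neighborhood of $N_n$ extending $J_{n-1}$, together with a null holomorphic embedding $F_n \in \IA((N_n,J_n))$ satisfying $\mgot(F_n) > s_n$ on $bN_n$ and $\|(F_n)_3\|_{0,\overline{N_n}} < \|F_3\|_{0,K} + \sum_{k\le n}\epsilon_k$. To reach stage $n+1$ I would: (i) extend $J_n$ in an arbitrary way to a complex structure $J_{n+1}$ on a neighborhood of $N_{n+1}$, always possible on an orientable surface; (ii) in the handle case, extend $F_n$ along the attaching arcs to a generalized null curve on $N_n$ union the arcs whose tangents take values in $\Agot\setminus\{0\}$ and whose third coordinate changes by less than $\epsilon_{n+1}/4$ along each arc, then invoke the Mergelyan theorem for null curves \cite[Theorem 7.2]{AF2} to approximate this by an honest $\tilde F_{n+1} \in \IA((N_{n+1},J_{n+1}))$ with $\|\tilde F_{n+1} - F_n\|_{1,N_n} < \epsilon_{n+1}/2$ and $\|(\tilde F_{n+1})_3\|_0 < \|(F_n)_3\|_0 + \epsilon_{n+1}/2$; (iii) apply Lemma \ref{lem:main2} to $\tilde F_{n+1}$ on the bordered Riemann surface $(N_{n+1}, J_{n+1})$, with $\Rcal$ a small thickening of $N_n$ inside $\mathring N_{n+1}$, threshold $s := s_n$, target $\hat s := s_{n+1}$, and tolerance $\epsilon_{n+1}/2$, producing $F_{n+1} \in \IA((N_{n+1},J_{n+1}))$ satisfying (L1)--(L4); a further general-position perturbation \cite[Theorem 2.5]{AF2} preserves embeddedness.

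Passing to the limit, the $J_n$ stabilize on each compact and define a complex structure $J$ on $N$ with $J \equiv J_K$ on a neighborhood of $K$. The telescoping estimates give $\|F_{n+1} - F_n\|_{1,N_{n-1}} < \epsilon_{n+1}$, so the sequence converges in $\Cscr^1$ on compacts of $N$ to a null holomorphic map $G$ on $(N,J)$. By choosing the $\epsilon_n$ small enough compared to the minima of $|dF_n|$ on prior Runge subdomains (in the spirit of the inductive choice (b$_n$) in Section \ref{sec:th}), Hurwitz-type arguments make $G$ an embedding. Iterating (ii) and (L4) yields $|G_3| < \|F_3\|_{0,K} + \sum_k \epsilon_k < \|F_3\|_{0,K} + \epsilon$ on $N$, while iterating (L3) gives $\mgot(G) > s_n - \epsilon$ on $N \setminus N_n$ for every $n$; since $s_n \to +\infty$, the map $\mgot \circ G$ is proper, and hence so is $G$.

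The hard part will be step (ii): the Mergelyan-type extension across a handle must simultaneously respect the bounded-third-coordinate property while producing a map that can then be fed into Lemma \ref{lem:main2} without destroying that control. This hinges on the freedom to prescribe the extension along each attached arc; since $\Agot \cap \{|z_3| < \delta\}$ is a nonempty open subset of $\Agot\setminus\{0\}$ for every $\delta > 0$, the convex-integration type lemma \cite[Lemma 7.3]{AF2} supplies null paths with arbitrarily small change in the third coordinate, and the subsequent Mergelyan approximation preserves the bound up to the prescribed error $\epsilon_{n+1}/2$. The companion point, preserving injectivity under both the Mergelyan extension and the Riemann-Hilbert push in Lemma \ref{lem:main2}, is handled routinely by the general-position theorem of \cite[Theorem 2.5]{AF2}.
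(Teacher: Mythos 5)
Your overall strategy — alternating Mergelyan extension over newly attached handles with Riemann-Hilbert pushes from Lemma~\ref{lem:main2} — is the same as the paper's, but there is a concrete gap in step~(iii) that is precisely the point the paper's proof is structured to avoid.

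You take a \emph{fixed} exhaustion $N_0\Subset N_1\Subset\cdots$ with $\bigcup N_n=N$ and try to apply Lemma~\ref{lem:main2} on the bordered surface $(N_{n+1},J_{n+1})$ with $\Rcal$ ``a small thickening of $N_n$.'' Two things fail here. First, when a handle is attached, a thickening of $N_n$ is not in $\Bscr(N_{n+1})$: the complement $N_{n+1}\setminus\overline\Rcal$ is not a disjoint union of annuli, so the basic hypothesis $\Rcal\in\Bscr(M)$ of Lemma~\ref{lem:main2} is violated. (You can repair this by taking $\Rcal$ to be a thin thickening of $N_n$ together with the attaching arcs, so that $\Rcal$ carries the full homology of $N_{n+1}$.) Second, and more seriously, the Mergelyan theorem only gives $\tilde F_{n+1}$ close to the generalized null curve \emph{on a neighborhood of the admissible set} $N_n\cup(\text{arcs})$; it provides no control over the values of $\tilde F_{n+1}$ near $bN_{n+1}$. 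Consequently the hypothesis~\eqref{eq:lemma2} of Lemma~\ref{lem:main2}, namely $\mgot(\tilde F_{n+1})>s_n$ on $\overline{N_{n+1}}\setminus\Rcal$, cannot be guaranteed — and without it the lemma does not apply. You cannot fix this by enlarging $\Rcal$ towards $bN_{n+1}$, because the control on $\mgot$ coming from the inductive hypothesis and the arc construction lives only on the thin collar around $N_n\cup(\text{arcs})$.

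The paper's way around this is exactly the missing device in your argument: it fixes one complex structure $J_0$ on all of $N$ at the outset, builds a nested sequence of \emph{thin} domains $\Rcal_n$ (each a small thickening of the previous one together with the newly attached arc, so that the $\mgot$ bound propagates), and lets $N_0:=\bigcup_n\Rcal_n$ be whatever open subdomain of $N$ results. Since the inclusions are chosen to induce isomorphisms $H_1(\overline\Rcal_n;\z)\cong H_1(\overline M_n;\z)$, the domain $N_0$ is diffeomorphic to $N$ via a diffeomorphism $\phi\colon N\to N_0$ that is the identity near $K$, and the final complex structure $J$ is the pull-back $\phi^*(J_0|_{N_0})$ while $G:=\widehat G\circ\phi$. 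Your construction tacitly assumes the thin domains can always be pushed all the way out to $bN_{n+1}$, which is not justified; once you accept that they may not exhaust $N$, the diffeomorphism step is unavoidable. The remaining ingredients in your write-up — the third-coordinate control via \cite[Lemma~7.3]{AF2}, the general-position perturbations for embeddedness, and the telescoping estimates for convergence and properness — are correct and match the paper.
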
 

\begin{proof} 
If $N$ has finite topological type then the conclusion is implied by Theorem \ref{th:main2}. 

Assume now that $N$ has infinite topology. Choose a bordered Riemann surface $M_0$, with $K\subset M_0 \Subset U$, such that $N\setminus M_0$ has no relatively compact connected components. Choose a complex structure $J_0$ on $N$ such that 
\begin{equation}
\label{eq:J0}
		J_0=J_K \quad\text{on} \ \overline M_0
\end{equation}
and denote by $\Ncal_0:=(N,J_0)$ the corresponding Riemann surface.  In the sequel we use $J_0$ as the complex structure on subdomains of $N$.

Note that $\overline M_0$ is Runge in $\Ncal_0$. Pick an exhaustion $\{\overline M_j\}_{j\in\n}$ of $\Ncal_0$ by smoothly bounded compact domains 
such that $\overline M_{j-1}\subset M_j$, every $M_j$ is Runge in $\Ncal_0$, and the difference $\overline M_j\setminus M_{j-1}$ has the Euler characteristic $\chi(\overline M_j\setminus M_{j-1})=-1$ for every $j\in\n$; see \cite[Lemma 4.2]{AL3}.

Set $\Rcal_0:=M_0$. Since $\overline\Rcal_0$ is Runge in $M_1$, the inclusion map $\overline\Rcal_0\hookrightarrow \overline M_1$ induces a monomorphism of the homology groups $H_1(\overline\Rcal_0;\z)\hookrightarrow H_1(\overline M_1;\z)$, hence we may assume that $H_1(\overline \Rcal_0;\z)\subsetneq H_1(\overline M_1;\z)$. (Recall that $\chi(\overline M_1\setminus\Rcal_0)=-1$, so the inclusion is proper.) There is a smooth Jordan curve $\hat\gamma \subset M_1$ intersecting $M_1\setminus \Rcal_0$ in a compact connected Jordan arc $\gamma$ with endpoints $a, b\in b\Rcal_0$ and otherwise disjoint from $\overline\Rcal_0$ such that  
\begin{equation}
\label{eq:homo}
H_1(\overline\Rcal_0\cup\gamma;\z)=H_1(\overline M_1;\z).
\end{equation}

Set $F^0=(F^0_1,F^0_2,F^0_3):=F$ and $\epsilon_0:=\epsilon$. By a general position argument, we may assume that $\mgot(F^0)>s_0>0$ on $b M_0$ for some constant $s_0>0$; see \eqref{eq:m(f)}. Set $s_n:=s_0+n$ for all $n\in\n$.

We may assume without loss of generality that $F^0\colon \overline\Rcal_0\to\c^3$ is a null embedding \cite[Theorem 2.5]{AF2}. Hence there is a number $\epsilon_1$ with $0<\epsilon_1<\epsilon_0/2$ such that every holomorphic map $Y\colon \overline\Rcal_0\to\c^3$ satisfying $\|Y-F^0\|_{1,\overline\Rcal_0}< 2\epsilon_1$ is an embedding.

We extend $F^0$, with the same name, to a smooth injective map $\overline\Rcal_0\cup\gamma\to\c^3$ so that the following conditions are satisfied:
\begin{itemize}
\item $\mgot(F^0)>s_0$ on $b\Rcal_0\cup\gamma$.
\item $\|F^0_3\|_{0,\overline\Rcal_0\cup\gamma}<\|F^0_3\|_{0,\overline\Rcal_0}+\epsilon_1/2$.
\item The tangent vector to $F^0$ at any point of $\gamma$ lies in $\Agot\setminus\{0\}$.
\end{itemize}

Observe that the compact set $\overline\Rcal_0\cup\gamma\subset \Ncal_0$ is {\em admissible} in the sense of \cite[Def.\ 2.2]{AL1} (see also \cite[Def.\ 2]{AL2} or \cite[Def.\ 7.1]{AF2}), and that $F^0\colon \overline\Rcal_0\cup\gamma\to\c^3$ is a {\em generalized null curve} in the sense of \cite[Def. 5]{AL2} or \cite{AF2}. Therefore, the Mergelyan theorem for null curves in $\c^3$ \cite{AF2,AL1,AL2} applies, providing a smoothly bounded compact domain $\overline\Rcal_1 \subset M_1$ and a null curve $\wh F^1=(\wh F^1_1,\wh F^1_2,\wh F^1_3)\in\IA(\Rcal_1)$ enjoying the following properties:
\begin{itemize}
\item $\overline\Rcal_0\cup\gamma\subset\Rcal_1\Subset M_1$ and the inclusion map $\overline\Rcal_1\hookrightarrow \overline M_1$ induces an isomorphism between the homology groups $H_1(\overline\Rcal_1;\z)\stackrel{\cong}{\longrightarrow} H_1(\overline M_1;\z)$; see \eqref{eq:homo}.
\item $\|\wh F^1-F^0\|_{1,\overline\Rcal_0}<\epsilon_1/2$.
\item $\mgot(\wh F^1)>s_0$ on $\overline\Rcal_1\setminus\Rcal_0$.
\item $\|\wh F^1_3\|_{0,\overline\Rcal_1}<\|F^0_3\|_{0,\overline\Rcal_0}+\epsilon_1/2$.
\end{itemize}
Then, we can apply Lemma \ref{lem:main2} to the data
\[
(M \,,\, \Rcal \,,\, F \,,\, s  \,,\, \epsilon \,,\, \hat s)= (M_1 \,,\, \Rcal_1 \,,\, \wh F^1 \,,\, s_0  \,,\, \epsilon_1/2 \,,\, s_1),
\]
obtaining $F^1\in\IA(\Rcal_1)$, which can be assumed an embedding \cite{AF2}, such that:
\begin{itemize}
\item $\|F^1-F^0\|_{1,\overline\Rcal_0}<\epsilon_1$.
\item $\mgot(F^1)>s_0$ on $\overline\Rcal_1\setminus\Rcal_0$.
\item $\mgot(F^1)>s_1$ on $b\Rcal_1$.
\item $\|F^1_3\|_{0,\overline\Rcal_1}<\|F^0_3\|_{0,\overline\Rcal_0}+\epsilon_1$.
\end{itemize}

By repeating the above procedure inductively as in the proof of Theorem \ref{th:main2}, we find sequences of bordered domains $\Rcal_n\Subset \Ncal_0$, null holomorphic embeddings $F^n\colon\overline\Rcal_n\to\c^3$, and numbers $\epsilon_n>0$, $n\in\n$, such that the 
following conditions hold for every $n\in \n$:
\begin{enumerate}[\rm (a)]
\item $\Rcal_{n-1}\Subset\Rcal_n\subset M_n$ and the inclusion map $\overline\Rcal_n\hookrightarrow \overline M_n$ induces an isomorphism between the homology groups $H_1(\overline\Rcal_n;\z)\stackrel{\cong}{\longrightarrow} H_1(\overline M_n;\z)$.
\item $\epsilon_n<\epsilon_{n-1}/2\leq \epsilon/2^n$ and every holomorphic map $Y\colon \overline\Rcal_n\to\c^3$ satisfying $\|Y-F^{n-1}\|_{1,\overline\Rcal_{n-1}}<2\epsilon_n$ is an embedding on $\overline\Rcal_{n-1}$.
\item $\|F^n-F^{n-1}\|_{1,\overline\Rcal_{n-1}}<\epsilon_n$.
\item $\mgot(F^n)>s_n$ on $b\Rcal_n$.
\item $\mgot(F^n)>s_{n-1}$ on $\overline\Rcal_n\setminus\Rcal_{n-1}$.
\item $\|F^n_3\|_{0,\overline\Rcal_n}<\|F^{n-1}_3\|_{0,\overline\Rcal_{n-1}}+\epsilon_n/2$.
\end{enumerate}

The union $N_0:=\bigcup_{n\in\n}\Rcal_n\subset N$ is an open domain in $N$.  Since $N=\bigcup_{n\in\n} M_n$, it follows from {\rm (a)} that the inclusion $N_0 \hookrightarrow N$ induces an isomorphism $H_1(N_0;\z)\stackrel{\cong}{\longrightarrow} H_1(N;\z)$. Therefore $N_0$ is homeomorphic to $N$, and hence also diffeomorphic to $N$. Fix a diffeomorphism $\phi\colon N \to N_0$ which equals the identity map on a neighborhood of $K$, and let $J=\phi^* (J_0|_{N_0})$ be the complex structure on $N$ obtained by pulling back the complex structure $J_0$ by $\phi$. The Riemann surface $\Ncal:=(N,J)$ is then biholomorphic to $(N_0,J_0|_{N_0})$ via $\phi$, and we have $J=J_0=J_K$ on a neighborhood of $K$ (see \eqref{eq:J0}).

Taking into account conditions {\rm (b)} and {\rm (c)}, the sequence $\{F^n\}_{n\in\n}$ converges uniformly on compacta in $N_0$ to a null holomorphic map $\widehat G\colon N_0\to\c^3$ (with respect to the complex structure $J_0$) satisfying
\begin{equation}
\label{eq:G-close}
	\|\widehat G-F^n\|_{1,\overline\Rcal_n}<2\epsilon_{n+1}<\epsilon_n\quad \forall n\in\n\cup\{0\}.
\end{equation}
Therefore $\widehat G$ is an embedding by {\rm (b)}, and property Theorem \ref{th:topology}-{\it ii)} holds.
Moreover, \eqref{eq:G-close} and {\rm (e)} show that $\mgot(\widehat G)\colon N_0\to\r$ is a proper map; hence so is $\widehat G$. Finally, properties {\rm (f)} and {\rm (b)} ensure {\it iii)}. The composed map $G:=\widehat G \circ \phi\colon N\to \c^3$ is then a null embedding of the Riemann surface $\Ncal=(N,J)$ satisfying Theorem \ref{th:topology}.
\end{proof}


\subsection*{Acknowledgements}
A.\ Alarc\'{o}n is supported by Vicerrectorado de Pol\'{i}tica Cient\'{i}fica e Investigaci\'{o}n de la Universidad de Granada, and is partially supported by MCYT-FEDER grants MTM2007-61775 and MTM2011-22547, Junta de Andaluc\'{i}a Grant P09-FQM-5088, and the grant PYR-2012-3 CEI BioTIC GENIL (CEB09-0010) of the MICINN CEI Program. 

F.\ Forstneri\v c is supported by the program P1-0291 and the grant J1-5432 from ARRS, Republic of Slovenia.


\vskip 0.5cm

\noindent Antonio Alarc\'{o}n

\noindent Departamento de Geometr\'{\i}a y Topolog\'{\i}a, Universidad de Granada, E-18071 Granada, Spain.

\noindent  e-mail: {\tt alarcon@ugr.es}

\vspace*{0.3cm}

\noindent Franc Forstneri\v c

\noindent Faculty of Mathematics and Physics, University of Ljubljana, and Institute
of Mathematics, Physics and Mechanics, Jadranska 19, 1000 Ljubljana, Slovenia.

\noindent e-mail: {\tt franc.forstneric@fmf.uni-lj.si}
\end{document}